\numberwithin{equation}{section}
\newtheorem{proposition}{Proposition}[section]
\newtheorem{lemma}[proposition]{Lemma}
\newtheorem{corollary}[proposition]{Corollary}
\newtheorem{theorem}{Theorem}[section]
\theoremstyle{definition}
\newtheorem{definition}{Definition}[section]
\newtheorem{remark}{Remark}[section]
\newcommand{\eqdef}{\overset{\mbox{\tiny{def}}}{=}}
\newcommand{\Rlog}{L}
\newcommand{\fluidnorm}[1]{\mathcal{S}_{#1}}
\newcommand{\fluidenergy}[1]{\mathcal{E}_{#1}}
\newcommand{\speed}{c_s}
\newcommand{\underpartial}{\underline{\partial}}
\newcommand{\DecayFunction}{\mathscr{B}}
\begin{document}

\title{The Stabilizing Effect of Spacetime Expansion on Relativistic Fluids With Sharp Results for the Radiation Equation of State}

\email{jspeck@math.mit.edu}
\author{Jared Speck$^*$}

\begin{abstract}
In this article, we study the $1 + 3$ dimensional relativistic Euler equations on a pre-specified
conformally flat expanding spacetime background with spatial slices that are diffeomorphic to $\mathbb{R}^3.$ We assume that the fluid verifies the equation of state $p = \speed^2 \rho,$ where $0 \leq \speed \leq \sqrt{1/3}$ is the speed of sound. We also assume that the inverse of the scale factor associated to the expanding spacetime metric verifies a $\speed-$dependent
time-integrability condition. Under these assumptions, we use the vectorfield energy method to prove that an explicit family of physically motivated, spatially homogeneous, and spatially isotropic fluid solutions is globally future-stable under small perturbations of their initial conditions. The explicit solutions corresponding to each scale factor are analogs of the well-known spatially flat Friedmann-Lema\^{\i}tre-Robertson-Walker family. Our nonlinear analysis, which exploits dissipative terms generated by the expansion, shows that the perturbed solutions exist for all future times and remain close to the explicit solutions. This work is an extension of previous results, which showed that an analogous stability result holds when the spacetime is exponentially expanding. In the case of the radiation equation of state $p = (1/3)\rho,$ we also show that if the time-integrability condition for the inverse of the scale factor fails to hold, then the explicit fluid solutions are unstable. More precisely, we show that arbitrarily small smooth perturbations of the explicit solutions' initial conditions can launch perturbed solutions that form shocks in finite time. The shock formation proof is based on the conformal invariance of the relativistic Euler equations when $\speed^2 = 1/3,$ which allows for a reduction to a well-known result of Christodoulou.
\end{abstract}

\thanks{$^*$Massachusetts Institute of Technology, Department of Mathematics, Cambridge, MA, USA}

\thanks{The author was supported in part by a Solomon Buchsbaum grant administered by the Massachusetts Institute of Technology, and by an NSF All-Institutes Postdoctoral Fellowship administered by the Mathematical Sciences Research Institute through its core grant DMS-0441170.}

\keywords{accelerated expansion; conformal invariance; cosmological constant; dissipation; dust equation of state; energy current; global existence; radiation equation of state; shock formation}

\thanks{\emph{Mathematics Subject Classification 2010.} Primary: 35A01; Secondary: 35Q31, 76Y05, 83F05}

\date{Version of \today}
\maketitle
\setcounter{tocdepth}{2}

\pagenumbering{roman} \tableofcontents \newpage \pagenumbering{arabic}


\section{Introduction and Summary of Main Results} \label{S:INTRO}

In this article, we study the future-global behavior of small perturbations of a family of physically motivated explicit solutions to the $1 + 3$ dimensional relativistic Euler equations. We assume throughout that the fluid \emph{equation of state} is 

\begin{align}
	p = \speed^2 \rho, 
\end{align}
where $p \geq 0$ denotes the \emph{pressure}, $\rho \geq 0$ denotes the \emph{energy density}, and the constant $\speed \geq 0$ is known as the \emph{speed of sound}. The explicit solutions model a spatially homogeneous isotropic fluid of strictly positive energy density evolving in a pre-specified spacetime-with-boundary $([1,\infty) \times \mathbb{R}^3,g)$ which is undergoing expansion. Here, $g$ is an expanding Lorentzian metric of the form \eqref{E:METRICFORM}. As is discussed in Section \ref{SS:PREVIOUSWORK} in more detail, such solutions play a central role in cosmology, where much of the ``normal matter''\footnote{In cosmological models, ``normal matter'' is distinguished from ``dark matter;'' the latter interacts with normal matter indirectly, i.e., only through its gravitational influence.} content of the universe is often assumed to be effectively modeled by a fluid with the aforementioned properties.

Although all speeds verifying $0 \leq \speed^2 \leq 1$ are studied in the cosmology literature, the cases $\speed^2 = 0$ and $\speed^2 = 1/3$ are of special significance. The former is known as the case of the ``pressureless dust,'' and is often assumed to be a good model for the normal matter in the present-day universe. The latter is known as the case of ``pure radiation,'' and is often assumed to be a good model of matter in the early universe. For an introductory discussion of these issues, see \cite[Section 5.2]{rW1984}. As we shall see, in order to prove our main results, we will assume that $\speed^2 \in [0, 1/3].$ We simultaneously analyze the cases $\speed^2 \in (0, 1/3)$ using the same techniques, while the endpoint cases $\speed^2 = 0$ and $\speed^2 = 1/3$ require some modifications; when $\speed^2 =0,$ $\rho$ loses one degree of differentiability relative to the other cases, while the analysis of the case $\speed^2 = 1/3$ is based on the fact that the relativistic Euler equations are conformally invariant in this case (see Section \ref{S:PURERADIATION}). We will also provide some heuristic evidence of fluid instability when $\speed^2 > 1/3;$ (see Section \ref{SSS:EXPANSIONRESTRICTIONROLE}). 

We assume throughout that the pre-specified \emph{spacetime metric} $g$ is of the form

\begin{align} \label{E:METRICFORM}
	g & = -dt^2 + e^{2\Omega(t)} \sum_{j=1}^3 (dx^j)^2, && \Omega(1) = 0,
\end{align}
where $e^{\Omega(t)} > 0$ is known as the \emph{scale factor}, $t \in [1,\infty)$ is a time coordinate, and $(x^1,x^2,x^3)$ are standard  coordinates on $\mathbb{R}^3.$ A major goal of this article is to study the influence of the scale factor on
the long-time behavior of the fluid. Our analysis will heavily depend on the assumptions we make on $e^{\Omega(t)},$ which are stated later in this section. Metrics of the form \eqref{E:METRICFORM} are of particular importance because experimental evidence indicates that universe is expanding and approximately spatially flat \cite{gEjBsW1992}. According to Steven Weinberg \cite[pg. 1]{sW2008}, ``Almost all of modern cosmology is based on [metrics of the form \eqref{E:METRICFORM}].'' The physical significance of these metrics is discussed more fully in Section \ref{SS:PREVIOUSWORK}. 

Under equations of state of the form $p = \speed^2 \rho,$ the relativistic Euler equations in the spacetime-with-boundary $([1,\infty) \times \mathbb{R}^3, g)$ are the four equations

\begin{align} \label{E:DIVT0}
	D_{\alpha} T^{\alpha \mu} & = 0, && (\mu = 0,1,2,3),
\end{align}   
where

\begin{align} \label{E:EULEREMT}
	T^{\mu \nu} & = (\rho + p)u^{\mu} u^{\nu} + p (g^{-1})^{\mu \nu}
\end{align}
is the energy-momentum tensor of a perfect fluid, $D$ is the Levi-Civita connection of $g,$
and $u^{\mu}$ is the \emph{four-velocity}, a future-directed ($u^0 > 0$) vectorfield normalized by 

\begin{align} \label{E:UNORMALIZED}
	g_{\alpha \beta}u^{\alpha} u^{\beta} & = - 1.
\end{align}
Equivalently, the following equation for $u^0$ holds in our coordinate system:

\begin{align} \label{E:U0UPPERISOLATED}
	u^0 & = (1 + g_{ab}u^a u^b)^{1/2}.
\end{align}

Under the above assumptions, it is well-known (see e.g. \cite{jS2011}) that when $\rho > 0,$ the relativistic Euler equations \eqref{E:DIVT0} can be written in the equivalent form

\begin{subequations}
\begin{align}
	u^{\alpha} D_{\alpha} \ln \rho + (1 + \speed^2) D_{\alpha} u^{\alpha} & = 0, && \label{E:EULERINTROP} \\
 	u^{\alpha} D_{\alpha} u^{\mu} + \frac{\speed^2}{1 + \speed^2} \Pi^{\alpha \mu} D_{\alpha} \ln \rho & = 0, && (\mu= 0,1,2,3), 
		\label{E:EULERINTROU} \\
	\Pi^{\mu \nu} & \eqdef u^{\mu} u^{\nu} + (g^{-1})^{\mu \nu}, && (\mu, \nu = 0,1,2,3).  \label{E:PIIINTRO}
\end{align}
\end{subequations}
The tensorfield $\Pi^{\mu \nu}$ projects onto the $g-$ orthogonal complement of $u^{\mu}.$ We remark that the system 
\eqref{E:EULERINTROP} - \eqref{E:EULERINTROU} is consistently overdetermined in the sense that the $0$ component of \eqref{E:EULERINTROU} is a consequence of the remaining (including \eqref{E:U0UPPERISOLATED}) equations.

Other important experimentally determined facts are: on large scales, the matter content of the universe appears to be approximately spatially homogeneous (see e.g. \cite{jYsBbPtS2005}) and isotropic (see e.g. \cite{kKsWoLmR1999}). These conditions can be modeled by the following explicit homogeneous/isotropic fluid ``background'' solution:

\begin{align}
	\widetilde{\rho} \eqdef \bar{\rho} e^{-3(1 + \speed^2)\Omega(t)}, \qquad \widetilde{u}^{\mu} & \eqdef \delta_0^{\mu}, 
		\label{E:BACKGROUNDU}
\end{align}
where $\bar{\rho} > 0$ is a constant. It follows easily from the discussion in Section \ref{SS:ALTFORM} that \eqref{E:BACKGROUNDU} are solutions to \eqref{E:EULERINTROP} - \eqref{E:EULERINTROU}
(where $(t,x^1,x^2,x^3)$ are coordinates such that the metric $g$ has the form \eqref{E:METRICFORM}); see Remark \ref{R:BACKGROUNDSOLUTION}. 

The main goal of this article is to understand the future-global dynamics of solutions to \eqref{E:EULERINTROP} - \eqref{E:EULERINTROU} that are launched by small perturbations of the initial data corresponding the explicit solutions \eqref{E:BACKGROUNDU}. In order to prove our main stability results, we make the following assumptions on the scale factor. 

\begin{center}
	{\Large \textbf{Assumptions on} $e^{\Omega(t)}$}
\end{center}

\begin{align}
	& \mbox{$e^{\Omega(\cdot)} \in C^1\big([1,\infty),[1,\infty)\big)$
		and increases without bound as $t \to \infty$} \tag*{\textbf{A1}} \label{A:A1} \\
	& \mbox{$\left\lbrace \begin{array}{ll} 
	\int_{s = 1}^{\infty} e^{- 2 \Omega(s)} \, d s < \infty, & \speed^2 = 0, \\
	 \int_{s = 1}^{\infty} e^{- \Omega(s)} \DecayFunction(\Omega(s)) \, d s < \infty, & 0 < \speed^2 < 1/3, \\
	 \int_{s = 1}^{\infty} e^{- \Omega(s)} \, ds  < \infty, & \speed^2 = 1/3 
	 \end{array} \right.$} \tag*{\textbf{A2}} \label{A:A2} \\
	& \mbox{$\left\lbrace \begin{array}{ll} 
	 \mbox{$\DecayFunction(\cdot) \in C^1\big([1,\infty),(0,\infty)\big)$ and increases without bound}, & 0 < 	
	 	\speed^2 < 1/3,  \\
	 \frac{\DecayFunction'(\Omega)}{\DecayFunction(\Omega)} \leq (1 - 3 \speed^2) \ \mbox{for all large} \ \Omega, & 0 < 	
	 	\speed^2 < 1/3, \\
	 \int_{\Omega = 1}^{\infty} \frac{d \Omega}{\DecayFunction(\Omega)}  < \infty, & 0 < \speed^2 < 1/3.
	 \end{array} \right.$} \tag*{\textbf{A3}} \label{A:A3}
\end{align}


We now roughly summarize our main results. See Theorem \ref{T:GLOBALEXISTENCE} and Corollary \ref{C:SHOCKSCANFORM} for precise statements.

\begin{changemargin}{.25in}{.25in} 
\textbf{Main Results.} \
Assume that the scale factor of $g$ verifies assumptions \ref{A:A1} - \ref{A:A3} and that the equation of state
$p = \speed^2 \rho$ holds, where $0 \leq \speed^2 \leq 1/3.$ Then the explicit solution $(\widetilde{\rho},\widetilde{u}^{\mu})$ defined in \eqref{E:BACKGROUNDU} is globally future-stable. More precisely, if the initial data corresponding to the explicit solution are perturbed by a sufficiently small element of a suitable Sobolev space, then the corresponding solution of \eqref{E:EULERINTROP} - \eqref{E:EULERINTROU} exists classically for 
$(t,x^1,x^2,x^3) \in [1,\infty) \times \mathbb{R}^3$ and remains globally close to the explicit solution. In particular, no future-shocks form in such solutions.

In contrast, if $\speed^2 = 1/3,$ assumption \ref{A:A1} holds, but $\int_{s = 1}^{\infty} e^{- \Omega(s)} \, d s = \infty,$ then the solution $(\widetilde{\rho},\widetilde{u}^{\mu})$ is nonlinearly unstable. In particular, there exists an open family of initial data containing arbitrarily small smooth perturbations of the explicit data whose corresponding perturbed solutions develop shock singularities in finite time.
\end{changemargin}

\begin{remark}
	Since we prove our stability results through the use of energy methods (which are stable), the precise form of the metric 
	\eqref{E:METRICFORM} is not essential and was chosen only out of convenience; our stability results would also hold 
	under suitably small perturbations of $g$ (belonging e.g. to a Sobolev-type space of sufficiently high order). However, it is 
	not clear whether or not the shock formation result is stable under perturbations of the metric; see Remark \ref{R:UNSTABLEPROOF}.
\end{remark}

\begin{remark}
As we will see in Proposition \ref{P:DECOMPOSITION} and Corollary \ref{C:ISOLATEPARTIALTPUJ} (see also Section \ref{SSS:EXPANSIONRESTRICTIONROLE}), spacetime expansion generates \emph{dissipative} terms in the relativistic Euler equations. This dissipative mechanism is the main reason we are able to prove our future stability results. A related fact is that our proof of future stability also applies to the spacetime-with-boundary $(g,[1,\infty) \times \mathbb{T}^3).$ We remark that a framework for proving small-data global existence for quasilinear hyperbolic PDEs in the presence of strong dissipation has been developed in e.g. \cite{lHs1997} (see also \cite{tN1978} for the case of quasilinear wave equations), but its assumptions roughly coincide with the dissipative effect generated by $e^{\Omega(t)} = e^{Ht}$ (where $H>0$ is a constant) and
are therefore much stronger than \ref{A:A1} - \ref{A:A3}. 
\end{remark}

Some important examples of scale factors that appear in the cosmology literature and that verify \ref{A:A1} - \ref{A:A3} include \textbf{i)} the case of exponential expansion, in which $e^{\Omega(t)} \sim e^{Ht}$ for some ``Hubble'' constant $H > 0$ (choose $\DecayFunction(\Omega) = e^{q \Omega}$ for some small constant $q > 0$ when $0 < \speed^2 < 1/3$). Future stability in this case is implied by the results proved in \cite{iRjS2009}, \cite{jS2011} under the assumption $0 < \speed^2 < 1/3.$ \textbf{ii)} the case of accelerated power-law expansion, in which $e^{\Omega(t)} \sim t^{Q}$ for some constant $Q > 1$ (choose $\DecayFunction(\Omega) = e^{q \Omega}$ for some small constant $q> 0$ when $0 < \speed^2 < 1/3$). The significance of these expansion rates will be discussed in more detail in Section \ref{SS:PREVIOUSWORK}.

\subsection{Motivation and connections to previous work} \label{SS:PREVIOUSWORK}

We are interested in the system \eqref{E:EULERINTROP} - \eqref{E:EULERINTROU} because it plays a fundamental role in the standard model of cosmology. More precisely, physicists/cosmologists often couple the Einstein equations of general relativity to the relativistic Euler equations and sometimes also to additional matter models. They then study explicit solutions of the resulting coupled system and make predictions about the long-time behavior of the universe based on the properties of the explicit solutions. Now a basic requirement for the explicit solutions to have any long-time predictive value \emph{is that they be future-stable under small perturbations of their data}; this basic requirement is the primary motivation behind our investigation.

In the coupled problem mentioned above, the spacetime metric $g$ is not pre-specified, but is instead one of the dynamic variables. In this article, we don't address the coupled problem, but rather the behavior of the fluid matter under the influence of a large class of pre-specified spacetime metrics $g.$ Our hypotheses \ref{A:A1} - \ref{A:A3} on $g$ are meant to roughly capture the main features of the kinds of metrics that may arise in a large class of physically motivated expanding solutions to the coupled problem. Readers may consult e.g. \cite{aR2005b}, \cite{aR2006b} for background information on the coupled Einstein-matter problem in the presence of expansion. The physical relevance of these expanding metrics is the following: experimental evidence suggests that our own universe seems to be undergoing \emph{accelerated expansion}. There are many experimental references available on the issue of accelerated spacetime expansion; \cite{aRaF1998}, \cite{aRsP1999} are two examples. However, as noted in e.g. \cite{aR2005a}, \cite{aR2006b}, \cite{hR2009}, \textbf{the precise rate and mechanism of the expansion of our universe have yet to be determined}. This uncertainty has motivated us to make the very general hypotheses \ref{A:A1} - \ref{A:A3}. We expect that these hypotheses will be roughly consistent with the behavior of most of the physically relevant metrics $g$ that arise in the study of accelerated expansion in the coupled problem. Now for metrics of the form \eqref{E:METRICFORM}, accelerated expansion (i.e. $\frac{d}{dt} e^{\Omega(t)}, \frac{d^2}{dt^2} e^{\Omega(t)} > 0$) is equivalent to (where $\omega(t) \eqdef \frac{d}{dt} \Omega(t)$)

\begin{enumerate}
	\item $\omega > 0$
	\item  $\frac{d}{dt} \omega + \omega^2 > 0.$
\end{enumerate}
We note that these conditions are not equivalent to \ref{A:A1} - \ref{A:A3}; it is not difficult to see that $(1)- (2)$ may hold even if \ref{A:A2} - \ref{A:A3} fail (for all functions $\DecayFunction(\cdot)$), and conversely \ref{A:A1} - \ref{A:A3} may hold without the expansion being accelerated for all time (we don't even need $\omega$ to be differentiable in order to prove our results). Nevertheless, many of the scale factors appearing in the cosmology literature verify $(1) - (2)$ and also \ref{A:A1} - \ref{A:A3}. Later in this section we will further discuss the two previously mentioned examples of exponential expansion and accelerated power-law expansion.

Our main results are further motivated by the following: based on our previous experience \cite{iRjS2009}, \cite{jS2011},
in certain accelerated expanding regimes, we expect that a nearly quiet fluid's influence on $g$ in the coupled problem is often lower-order in comparison to the other influences (including the primary influences driving the expansion). In other words, as long as the fluid behavior is tame, we expect that the coupled problem effectively behaves as a partially decoupled system in that $g$ can highly influence the fluid, but not the other way around\footnote{This expectation is false for the Friedmann-Lema\^{\i}tre-Robertson-Walker family of solutions to the Euler-Einstein system with no cosmological constant or additional matter model. In this case, the fluid itself is the primary influence driving the expansion, which is not accelerated.}. Thus, we have the following moral consequence of our \textbf{Main Results}: we expect that the addition of a perfect fluid (verifying $p= \speed^2 \rho,$ $0 \leq \speed^2 \leq 1/3)$ would not destroy the stability of a stable solution to a coupled Einstein-matter system, as long as the stable metrics are expanding at rates in the spirit of \ref{A:A1} - \ref{A:A3}. As we will soon discuss, this expectation has previously been confirmed when the expansion is generated by the inclusion of a positive cosmological constant in the Einstein equations; in this case the expansion is very rapid and verifies
$e^{\Omega(t)} \sim e^{Ht},$ where $H > 0$ is a constant.

The key question from the previous paragraph (which is addressed by our main results) is \emph{whether or not the fluid behavior remains tame if it is initially tame}. This is a highly non-trivial question whose answer depends on the expansion properties of $g.$ In the case of Minkowski spacetime (where $e^{\Omega(t)} \equiv 1$), Christodoulou's monograph \cite{dC2007} demonstrated that under a general physical equation of state, the constant solutions (i.e., $\rho \equiv \bar{\rho} > 0,$ $u^{\mu} \equiv \delta_0^{\mu}$) to the relativistic Euler equations are unstable\footnote{There was one exceptional equation of state which features small-data global existence when the fluid is irrotational; this equation of state leads (in the irrotational case) to the quasilinear wave equation for minimal surface graphs.}. More precisely, there exists an open family of initial data whose corresponding solutions develop shock singularities in finite time. Furthermore, the family contains arbitrarily small (non-zero) perturbations of the constant state, including irrotational data. We state some of Christodoulou's main results in more detail in Theorem \ref{T:CHRISTODOULOUSHOCK} below. One very important aspect of his monograph is that it provides a complete description of the nature of the shock (we do not recall the fully detailed picture in Theorem \ref{T:CHRISTODOULOUSHOCK}). His work can be therefore be viewed as a major extension of the well-known article \cite{tS1985}, in which Sideris showed that singularities can form in the non-relativistic Euler equations under the polytropic equations of state for arbitrarily small perturbations of constant background solutions; in \cite{tS1985}, a detailed picture of the singularity formation was not obtained because the argument involved the analysis of averaged quantities. 

In contrast to Christodoulou's shock formation result, the works \cite{iRjS2009}, \cite{jS2011} of Rodnianski and the author
showed the following: when the scale factor verifies $e^{\Omega(t)} \sim e^{Ht}$ for some ``Hubble constant'' $H > 0,$ the equation of state is $p= \speed^2 \rho,$ and $0 < \speed^2 < 1/3,$ then $(g,\widetilde{\rho},\widetilde{u})$ (defined in \eqref{E:METRICFORM} and \eqref{E:BACKGROUNDU}) is a future-stable solution to the coupled Euler-Einstein equations. The exponential expansion $e^{\Omega(t)} \sim e^{Ht}$ of $g$ in these solutions is driven by the inclusion of a \emph{positive cosmological constant} $\Lambda$ in the Einstein equations\footnote{In the coupled problem, the scale factor corresponding to the explicit solution metric $g$ verifies \emph{Friedmann's ODE}, which leads to the asymptotic behavior $e^{\Omega(t)} \sim e^{Ht}.$}, where $H = \sqrt{\Lambda/3}.$ These special ``Friedmann-Lema\^{\i}tre-Robertson-Walker'' solutions $(g,\widetilde{\rho},\widetilde{u})$ lie at the heart of many cosmological predictions. Furthermore, the case $\speed^2 = 1/3$ was recently addressed in \cite{cLjK2011} using an extension of Friedrich's \emph{conformal method} (see e.g. \cite{hF2002}), which is closely related to the methods we use in Section \ref{S:PURERADIATION}. Roughly speaking, the conformal method is a collection of techniques for transforming the question of global-in-time existence into a question of local-in-time existence for the \emph{conformal field equations} through changes of variables. This method often works when the energy-momentum tensor of the matter model is trace-free (as is the case of the fluid energy-momentum tensor \eqref{E:EULEREMT} when $\speed^2 = 1/3.$) In summary, in the mild initial condition regime, previous results show that \emph{exponential expansion suppresses the formation of fluid shocks} when $0 < \speed^2 \leq 1/3.$ 

We now mention another well-studied mechanism for generating accelerated expansion in solutions to the coupled problem: the inclusion of a scalar field matter model in the Einstein-matter equations. In the simplest cases, the scalar field $\Phi$ is
postulated to verify a wave equation of the form $(g^{-1})^{\alpha \beta} D_{\alpha} D_{\beta} \Phi = V'(\Phi),$ where $V(\Phi)$ is a nonlinearity. As discussed in e.g. \cite{aR2004a}, \cite{aR2005a}, by adjusting $V,$ one is able to exert control over the scale factor $e^{\Omega},$ at least for special solutions. Let us briefly discuss two interesting examples that lead to expansion rates verifying our hypotheses \ref{A:A1} - \ref{A:A3}. First, in \cite{hR2008}, Ringstr\"{o}m studied nonlinearities that verify $V(0) > 0,$ $V'(0) = 0,$ $V''(0) > 0.$ For small $\Phi,$ the conditions on $V$ generate a positive cosmological-constant-like effect, and the coupled Einstein-scalar field system effectively behaves (for $\Phi \sim 0$)
as if one had introduced the cosmological constant $\Lambda = V(0)$ into the Einstein equations. As discussed in the previous paragraph, this generates exponential expansion. The main result of \cite{hR2008} was a proof of the future stability of a large family of background solutions to the coupled Einstein-scalar field system (with no fluid present). As a second example, in \cite{hR2009}, Ringstr\"{o}m studied the Einstein-scalar field system under the choice $V(\Phi) = V_0 e^{- \uplambda \Phi},$ where $V_0$ and $\uplambda$ are positive constants. For certain explicit solutions, this choice of $V$ generates power-law expansion $e^{\Omega(t)} \sim t^Q,$ where $Q = \sqrt{\uplambda},$ and the main result of \cite{hR2009} was a proof of the future stability of these explicit solutions when $Q > 1$ (i.e., when the expansion is accelerated).

\subsection{Outline of the analysis}

\subsubsection{The continuation principle}
Our proof of the future stability of the explicit solutions \eqref{E:BACKGROUNDU} 
is based on a continuation principle (Proposition \ref{P:CONTINUATION}) together with a standard bootstrap argument. With the aid of Sobolev embedding, the content of the continuation principle can be roughly summarized as follows: if the solution forms a singularity at time $T_{max},$ then a certain order-$N$ (throughout the article, $N \geq 3$ is a fixed integer) Sobolev norm $\fluidnorm{N}$ (see Definition \ref{D:NORMS}) of the solution necessarily blows-up at time $T_{max}.$ Therefore, if one can derive a priori estimates guaranteeing that $\fluidnorm{N}(t)$ remains finite for all $t \geq 1,$ then one has demonstrated future-global existence. To show that $\fluidnorm{N}(t)$ remains finite, we will assume that

\begin{align} \label{E:BOOTSTRAPINTRO}
	\fluidnorm{N}(t) \leq \epsilon
\end{align}
holds on a time interval $[1,T)$ of existence, where $\epsilon$ is a sufficiently small positive number; by standard local existence theory, this assumption holds for short times when the data are sufficiently small. Based on this assumption, we will then derive inequalities for $\fluidnorm{N}(t)$ leading to an \emph{improvement} on the assumption \eqref{E:BOOTSTRAPINTRO}, as long as the data and $\epsilon$ are sufficiently small. Consequently, since $\fluidnorm{N}(t)$ is continuous, it can never exceed a certain size, and future stability follows from the continuation principle. The complete argument is provided in
the proof of Theorem \ref{T:GLOBALEXISTENCE}.

\subsubsection{Energy currents} \label{SSS:CURRENTS}
The only general method we know of for estimating solutions to quasilinear hyperbolic PDEs (such as the relativistic Euler equations) involves the derivation of coercive integral identities. This framework is often referred to as the \emph{energy method}, and its raison d'\^{e}tre is connected to the following basic difficulty: the time derivative of $\fluidnorm{N}(t)$ cannot be controlled in terms of the solution itself, for it is a crudely defined Sobolev norm which has no special structure tying it to the evolution of the fluid solution. To estimate $\fluidnorm{N}(t),$ we will construct coercive quantities $\fluidenergy{N}(t)$ called \emph{energies}. $\fluidenergy{N}(t)$ will be used to control the $L^2$ norms of the up-to-top-order 
(i.e., up-to-$N^{th}-$order) spatial derivatives of the solution via the divergence theorem (see \eqref{E:DIV}). We remark that the coercive nature of $\fluidenergy{N}(t)$ is established in Proposition \ref{P:ENERGYNORMCOMPARISON}. We also remark that in the case $\speed^2 = 0,$ we will define separate energies $\mathcal{E}_{N;velocity}$ and $\mathcal{E}_{N-1;density}$ for the four-velocity and the density. The reason is that in this case, the evolution of the four velocity decouples from that of the density, and furthermore, the density is necessarily one degree less differentiable than the four-velocity.

To construct the energies, we will use a version of the \emph{method of vectorfield multipliers}. Over the last few decades, this fundamental method has been applied in many different contexts to a large variety of hyperbolic PDEs. All applications of this method are essentially extensions of Noether's theorem \cite{eN1918}. In particular, we make use of the multiplier method framework developed in \cite{dC2000}, a work that can be viewed as a geometric extension of Noether's theorem to handle \emph{regularly hyperbolic} (see \cite{dC2000} for Christodoulou's definition of regular hyperbolicity) systems/solutions that are derivable from a Lagrangian but perhaps lack symmetry (and hence the energy estimates contain error terms). The multiplier method allows us to construct \emph{compatible ``energy'' currents} with the help of an auxiliary \emph{multiplier vectorfield}; we will further discuss the specific multiplier used in this article later in this section. The energy currents are special solution-dependent vectorfields $\dot{J}^{\mu}[\partial_{\vec{\alpha}} \mathbf{W}, \partial_{\vec{\alpha}} \mathbf{W}],$ and the energies are 

\begin{align} \label{E:ENERGYINTRO}
	\fluidenergy{N}(t) \eqdef \left(\sum_{|\vec{\alpha}| \leq N} \int_{\mathbb{R}^3} \dot{J}^0[\partial_{\vec{\alpha}} \mathbf{W}, 	\partial_{\vec{\alpha}} \mathbf{W}] \, d^3 x \right)^{1/2}. 
\end{align}	
Here, $\partial_{\vec{\alpha}}$ is an order $\leq N$ spatial differential operator, $\mathbf{W} = (\ln(e^{3(1 + \speed^2)\Omega} \rho/\bar{\rho}),u^1,u^2,u^3)$ is an array of fluid variables, and the precise definition of $\dot{J}^{\mu}$ is given in Definition \ref{D:ENERGYCURRENT}. As is suggested by the notation, $\dot{J}^{\mu}[\partial_{\vec{\alpha}} \mathbf{W}, \partial_{\vec{\alpha}} \mathbf{W}]$ depends quadratically on $\partial_{\vec{\alpha}} \mathbf{W}.$ Furthermore, the coefficients of the quadratic terms depend on the solution $\mathbf{W}$ itself. 


In this article, we will not fully explain our construction of the currents $\dot{J}^{\mu}.$ Roughly speaking, these currents exist because the relativistic Euler equations are the Euler-Lagrange equations corresponding to a Lagrangian. For such PDEs, Christodoulou's aforementioned framework \cite{dC2000} explains how one can construct suitable currents $\dot{J}^{\mu}$ using a version of the multiplier method. In fact, energy currents for the relativistic Euler equations were first derived by Christodoulou in the Lagrangian-coordinate framework\footnote{Interestingly, the relativistic Euler equations fall just outside of the scope of the methods of \cite{dC2007}; in the Lagrangian-coordinate framework, the methods of \cite{dC2007} lead to semi-coercive estimates rather than fully coercive estimates. Nevertheless, we are able to derive fully coercive estimates in the Eulerian-coordinate framework.} in \cite{dC2000}, and later in the Eulerian-coordinate framework in \cite{dC2007}. The Eulerian-coordinate fluid energy current framework has since been applied by the author and his collaborator in various contexts; see \cite{jS2008b}, \cite{jS2008a}, \cite{jS2011}, and \cite{jSrS2011}. 

Assuming that $0 < \speed^2 \leq 1/3$ and that \eqref{E:BOOTSTRAPINTRO} holds for sufficiently small $\epsilon,$ 
we will exploit the following two fundamental properties of $\dot{J}^{\mu}:$ 

\begin{enumerate}
	\item $\dot{J}^0[\partial_{\vec{\alpha}} \mathbf{W}, \partial_{\vec{\alpha}} \mathbf{W}]$ 
	 $\approx \Big( \partial_{\vec{\alpha}} \ln \left(e^{3(1 + \speed^2)\Omega} \rho/\bar{\rho} \right)\Big)^2$
	 	$+ e^{2\Omega} \sum_{j=1}^3 (\partial_{\vec{\alpha}} u^j)^2$ 
		(see the proof of Proposition \ref{P:ENERGYNORMCOMPARISON}).
	\item For a solution $\mathbf{W},$ it can be shown that \\
		$\partial_{\mu} (\dot{J}^{\mu}[\partial_{\vec{\alpha}} \mathbf{W}, \partial_{\vec{\alpha}} \mathbf{W}])
		= F_{\vec{\alpha}}(\Omega(t),\omega(t);\mathbf{W}, \partial \mathbf{W}, \underpartial^{(2)} \mathbf{W}, \underpartial^{(3)} 		\mathbf{W},\cdots, \underpartial^{(|\vec{\alpha}| - 1)} \mathbf{W}, \partial_{\vec{\alpha}} \mathbf{W}).$ 
		The important point is that the smooth function $F_{\vec{\alpha}}$ 
		does \emph{not} depend on $\underpartial^{(|\vec{\alpha}| + 1)} \mathbf{W},$ 
		where $\underpartial$ denotes the spatial derivative gradient; see \eqref{E:DIVDOTJ}.
\end{enumerate}

These properties allow us to exploit the following basic integral identity, which is a version of the divergence theorem (see also \eqref{E:ENTIMEDERIVATIVE} and \eqref{E:FLUIDENERGYTIMEDERIVATIVE0SPEEDVELOCITY} - 
\eqref{E:FLUIDENERGYTIMEDERIVATIVE0SPEEDDENSITY}):

\begin{align} \label{E:DIV}
	\frac{d}{dt} \mathcal{E}_N^2(t) 
	& \eqdef \sum_{|\vec{\alpha}| \leq N}
		\int_{\mathbb{R}^3} \partial_{\mu} \big(\dot{J}^{\mu}[\partial_{\vec{\alpha}} \mathbf{W}, \partial_{\vec{\alpha}} 
		\mathbf{W}] \big) \, d^3 x \\
	& = \int_{\mathbb{R}^3}  
		F_N(\Omega(t),\omega(t);\mathbf{W}, \partial \mathbf{W}, \underpartial^{(2)} \mathbf{W}, \underpartial^{(3)} \mathbf{W},
		\cdots, \underpartial^{(N)} \mathbf{W}) \, d^3 x. \notag
\end{align}
Above, $F_N$ is a smooth function of its arguments. By $(1),$ the right-hand side of \eqref{E:DIV} is controllable in terms of $\mathcal{E}_N(t).$ Thus, $\mathcal{E}_N(t)$ is a norm-like quantity whose time evolution can be estimated. Roughly speaking, in order to apply the continuation principle, we have to show that the identity \eqref{E:DIV} prevents $\mathcal{E}_N(t)$ and $\fluidnorm{N}(t)$ from blowing up in finite time. In particular, we have to carefully investigate the structure of $F_N(\cdots);$ see the extended discussion in Section \ref{SSS:DIVERGENCECOMMENTS}. We also remark that in the case $\speed^2 = 0,$ we will utilize two distinct currents $\dot{J}_{velocity}^{\mu}$ and $\dot{J}_{density}^{\mu}$ in order to handle the partially decoupled nature of the relativistic Euler equations under the dust equation of state; this is of course connected to our previously mentioned use of separate energies $\mathcal{E}_{N;velocity}$ and $\mathcal{E}_{N-1;density}$ when $\speed^2 = 0.$

It is interesting to note that Christodoulou showed \cite{dC2000} that the Lagrangian for relativistic fluid mechanics is \emph{generic}, which means that up to null currents\footnote{A null current is a solution-dependent vectorfield whose divergence can be shown to be independent of the solution's derivatives without having to use the equations of motion (which are the relativistic Euler equations in the present article). Such null currents have a special algebraic structure and have been completely classified \cite{dC2000}.} (which by themselves lead to non-coercive integral identities) the method of vectorfield multipliers generates \emph{all possible integral identities} that a solution can verify. Furthermore, he showed that a certain widely used subclass of compatible currents, namely those generated by \emph{domain multiplier vectorfields}, have the following property: the only useful members of this class (i.e., generating an $L^2-$ coercive compatible ``energy'' current) are generated by domain multiplier vectorfields that belong to the \emph{inner core} of the characteristic subset of the tangent space. Now for the relativistic Euler equations, the inner core is the degenerate one-dimensional set $\mbox{span}\lbrace u \rbrace$ (see \cite{jS2008b}); this is connected to the fact that the fluid vorticity two-form verifies a transport equation in the direction of $u.$ Thus, it can be shown that up to multiplication by a scalar function, \emph{the only coercive compatible energy current generated by the domain multiplier vectorfields is the one defined below in Definition \ref{D:ENERGYCURRENT}.}

\subsubsection{The analytic effect of expansion and the restriction $0 \leq \speed^2 \leq 1/3$} \label{SSS:EXPANSIONRESTRICTIONROLE}

From the analytic standpoint, the stabilizing effect of spacetime expansion can be summarized as follows: 
the scale factor $e^{\Omega(t)}$ generates the dissipative $\omega(t) (3 \speed^2 - 2)u^j$ term on the right-hand side of \eqref{E:PARTIALTUJ}. The sources of these dissipative terms are the Christoffel symbols of $g$ 
relative to the coordinate system $(t,x^1,x^2,x^3),$ which are provided in Lemma \ref{L:BACKGROUNDCHRISTOFFEL}. In
the cases $0 < \speed^2 < 1/3,$ equation \eqref{E:PARTIALTUJ} can be rewritten as

\begin{align} \label{E:UJAPPROX}
	\partial_t \big[e^{\Omega} \DecayFunction(\Omega) u^j \big] 
		= \omega \Big\lbrace 3 \speed^2 - 1 + \frac{\DecayFunction'(\Omega)}{\DecayFunction(\Omega)}\Big\rbrace
			\big[e^{\Omega} \DecayFunction(\Omega) u^j \big]
		+ e^{\Omega} \DecayFunction(\Omega) \triangle'^j,
\end{align}
where $\DecayFunction$ is the function from the hypotheses \ref{A:A2} - \ref{A:A3} and $\omega(t) \eqdef \frac{d}{dt} \Omega(t).$ If we assume that the right-hand side of \eqref{E:UJAPPROX} is negligible (we will in fact establish this assumption when $0 < \speed^2 < 1/3$ and \eqref{E:BOOTSTRAPINTRO} holds), then by integrating the ``ODE'' \eqref{E:UJAPPROX}, we deduce that $|u^j| \lesssim \epsilon e^{-\Omega(t)}\DecayFunction^{-1}(\Omega(t)).$ Therefore, the size of the spatial part of $u$ as measured by $g$ is $|g_{ab} u^a u^b| = e^{2 \Omega} \delta_{ab} u^a u^b \lesssim \epsilon^2 \DecayFunction^{-2}(\Omega(t)).$ Using hypotheses \ref{A:A1} - \ref{A:A3}, it follows that when $0 < \speed^2 < 1/3,$ the quantity $g_{ab} u^a u^b,$ which appears e.g. as an inhomogeneous term in the Euler equation \eqref{E:FINALEULERP}, is expected to decay towards its background solution value of $0.$ Furthermore, $u^0$ (see \eqref{E:U0UPPERISOLATED}) is expected to decays towards its background value of $1.$ In the case $\speed^2 = 0,$ our assumption \eqref{E:BOOTSTRAPINTRO} will directly imply that $|g_{ab} u^a u^b| \lesssim \epsilon^2 e^{- 2 \Omega(t)},$ and we again expect decay towards $0.$ In the case $\speed^2 = 1/3$ (in which $\DecayFunction \equiv 1$), $|g_{ab} u^a u^b|$ is expected to remain at size $\epsilon^2.$ In contrast, if $\speed^2 > 1/3,$ this heuristic analysis suggests that there is nothing preventing the quantity $|g_{ab}u^a u^b|$ from growing unabatedly; we thus anticipate that there may be instability in these cases. We remark that in \cite{aR2004b}, Rendall also detected evidence of instability for $\speed^2 > 1/3$ through the use of formal power series expansions.

\subsubsection{Comments on the analysis of \eqref{E:DIV}} \label{SSS:DIVERGENCECOMMENTS}

Much of our work goes into estimating the $\int_{\mathbb{R}^3} F_N(\Omega(t), \omega(t);\mathbf{W}, \partial \mathbf{W}, \underpartial^{(2)} \mathbf{W}, \underpartial^{(3)} \mathbf{W}, \cdots, \underpartial^{(N)} \mathbf{W}) \, d^3 x$ term on the right-hand side of \eqref{E:DIV} in terms of the Sobolev norm $\fluidnorm{N}$ under the bootstrap assumption \eqref{E:BOOTSTRAPINTRO}. The end result of these estimates in the cases $0 < \speed^2 \leq 1/3$ is inequality \eqref{E:DIVJDOTL1}, and in the case $\speed^2 = 0$ is the inequalities \eqref{E:DIVJDOTL1DUSTVELOCITY} - \eqref{E:DIVJDOTL1DUSTDENSITY}. Our analysis is based on the identity \eqref{E:DIVDOTJ} for $\partial_{\mu} \dot{J}^{\mu}$ (in the cases $0 < \speed^2 \leq 1/3$), the identities \eqref{E:DIVDOTJ0SPEEDVELOCITY} - \eqref{E:DIVDOTJ0SPEEDDENSITY} for $\partial_{\mu} \dot{J}_{velocity}^{\mu}$ and $\partial_{\mu} \dot{J}_{density}^{\mu}$ (in the case $\speed^2 = 0$), on standard Sobolev-Moser type estimates (see the Appendix), and on Lemma \ref{L:EOVINHOMOGENEOUS}. Lemma \ref{L:EOVINHOMOGENEOUS} is a particularly important ingredient in this analysis, for this lemma dissects the algebraic structure of the inhomogeneous terms in the equations verified by the quantities $\partial_{\vec{\alpha}} \mathbf{W}.$ Understanding this structure is essential since these inhomogeneous terms appear in the expression \eqref{E:DIVDOTJ} for $\partial_{\mu} \big(\dot{J}^{\mu} [\partial_{\vec{\alpha}} \mathbf{W}, \partial_{\vec{\alpha}} \mathbf{W}] \big)$ (and hence in $F_N(\cdots)$), and also in the analogous expressions \eqref{E:DIVDOTJ0SPEEDVELOCITY} - \eqref{E:DIVDOTJ0SPEEDDENSITY} in the case $\speed^2 = 0.$ We remark that the quantities $\partial_{\vec{\alpha}} \mathbf{W}$ for $|\vec{\alpha}| \leq N$ verify linear (in $\partial_{\vec{\alpha}} \mathbf{W}$) PDEs whose principal coefficients depend on the solution $\mathbf{W}$ and whose inhomogeneous terms depend on the derivatives $\partial_{\vec{\beta}} \mathbf{W}$ for $|\vec{\beta}| \leq |\vec{\alpha}|.$ These PDEs are known as the \emph{equations of variation}; we describe them in detail in Section \ref{SS:ENERGY}.

We now discuss some of the subtleties in our derivation of the estimates \eqref{E:DIVJDOTL1DUSTVELOCITY} - \eqref{E:DIVJDOTL1}
which are the main estimates used in our derivation of the key energy inequalities \eqref{E:ENINTEGRALDUSTVELOCITY} - \eqref{E:UNMINUSONEINTEGRAL} of Proposition \ref{P:INTEGRALENERGYINEQUALITIES}. For simplicity, we will focus the present discussion on the cases $0 < \speed^2 < 1/3$ and the estimate \eqref{E:DIVJDOTL1}, which provides an upper bound for $\sum_{|\vec{\alpha}| \leq N} \int_{\mathbb{R}^3} \partial_{\mu} (\dot{J}^{\mu}[\partial_{\vec{\alpha}} \mathbf{W}, \partial_{\vec{\alpha}} \mathbf{W}]) \, d^3 x$ in terms of the norm $\fluidnorm{N}.$ A very important aspect of our argument in these cases is the following: in order to derive \eqref{E:DIVJDOTL1} and to close our global existence argument, \emph{we need to prove that the lower-order spatial derivatives of $u^j$ decay faster} than its top-order derivatives. To see why this is the case, we will now show that without this improved decay, there would be an obstruction to global existence. We illustrate this obstruction by closely investigating a specific term from the right-hand side of \eqref{E:DIV}. In particular, the $\frac{2 \speed^2 \mathfrak{F}}{(1 + \speed^2)} \Rlog$ term from the identity \eqref{E:DIVDOTJ} below, where $\mathfrak{F} = - \omega \frac{(1 + \speed^2)}{u^0} g_{ab}u^a u^b$ is the term on the right-hand side of \eqref{E:FINALEULERP}, leads to the identity

\begin{align} \label{E:HEURISTICENERGYIDENTITY}
	\frac{d}{dt} \mathcal{E}_N^2(t) = - 2 \speed^2 \int_{\mathbb{R}^3} \omega \frac{1}{u^0} 
	\overbrace{g_{ab}}^{e^{2\Omega}\delta_{ab}} u^a u^b \Rlog  \, d^3 x + \cdots,
\end{align}
where $\Rlog \eqdef \ln \left(e^{3(1 + \speed^2)\Omega} \rho/\bar{\rho} \right),$ and
$\mathcal{E}_N^2(t) \approx \| \Rlog \|_{H^N}^2 + e^{2\Omega} \sum_{j=1}^3 \| u^j \|_{H^N}^2.$ Using \eqref{E:BOOTSTRAPINTRO}, \eqref{E:HEURISTICENERGYIDENTITY}, the Sobolev embedding result $\| u^j \|_{L^{\infty}} \lesssim \| u^j \|_{H^2} \lesssim e^{- \Omega} \mathcal{E}_N,$ and the estimate $u^0 \geq 1,$ we deduce that

\begin{align} \label{E:USELESSENERGYIDENTITY}
	\frac{d}{dt} \mathcal{E}_N^2(t) \lesssim \omega \mathcal{E}_N^3(t) + \cdots.
\end{align}
Inequality \eqref{E:USELESSENERGYIDENTITY} allows for the dangerous possibility that $\mathcal{E}_N^2(t)$ grows unabatedly, and is therefore not sufficient to prove our main stability theorem.

To remedy this difficulty, we introduce the lower-order Sobolev norm $\mathcal{U}_{N-1} \eqdef e^{\Omega} \DecayFunction(\Omega) \Big(\sum_{j=1}^3 \| u^j \|_{H^{N-1}}^2 \Big)^{1/2}$ (see Definition \ref{D:NORMS}). Here, $\DecayFunction$ is the function
from the hypotheses \ref{A:A2} - \ref{A:A3}. The main point is the following: if we knew that 
$\mathcal{U}_{N-1} \leq \epsilon,$ then the Sobolev embedding estimate $\| u^j \|_{L^{\infty}} \lesssim 
\| u^j \|_{H^2} \lesssim e^{- \Omega} \DecayFunction^{-1}(\Omega) \mathcal{U}_{N-1} \lesssim \epsilon e^{-\Omega}\DecayFunction^{-1}(\Omega)$ would
allow us to upgrade \eqref{E:USELESSENERGYIDENTITY} to

\begin{align} \label{E:USEFULENERGYIDENTITY}
	\frac{d}{dt} \mathcal{E}_N^2(t) \lesssim \epsilon \omega(t) \DecayFunction^{-1}(\Omega(t)) \mathcal{E}_N^2(t) + \cdots.
\end{align}
Because of our assumptions \ref{A:A1} - \ref{A:A3} on the scale factor, we have 
that $\omega(t) \DecayFunction^{-1}(\Omega(t))$ $= \frac{d}{dt} \Big\lbrace \int_{1}^{\Omega(t)} \DecayFunction^{-1}(\widetilde{\Omega}) \, d \widetilde{\Omega} \Big\rbrace \in L_t^1([1,\infty)).$ Therefore, the first term on the right-hand side of \eqref{E:USEFULENERGYIDENTITY} is amenable to Gronwall's inequality. Assuming that the remaining terms $\cdots$ could be treated similarly, we would therefore be able to derive an a priori estimate for $\mathcal{E}_N^2(t)$ guaranteeing that it remains uniformly small for all time; thanks to the continuation principle, this is the main step in proving global existence. At its core, the proof of our main stability theorem is essentially a more elaborate version of the estimate \eqref{E:USEFULENERGYIDENTITY}.

It remains to discuss how we obtain the desired control over the norm $\mathcal{U}_{N-1}.$ Roughly speaking, we will
use the improved decay for the $u^j$ suggested by equation \eqref{E:UJAPPROX}. It turns out that the error terms corresponding to this equation are small enough that we can effectively control the time derivative of $\mathcal{U}_{N-1}(t)$ even if we only have control over the weaker norm $\sum_{j=1}^3 \sum_{|\vec{\alpha}|=N} e^{\Omega} \| \partial_{\vec{\alpha}} u^j \|_{L^2}$ of the top-order derivatives (which will follow from the bootstrap assumption \eqref{E:BOOTSTRAPINTRO}). The precise estimates corresponding to this fact are provided in inequalities \eqref{E:UNORMTIMEDERIVATIVE} and \eqref{E:TRIANGLEPRIMEJHNMINUSONE} below. We emphasize that in order to derive the improved decay (improved relative to the rate predicted by the Sobolev norm bootstrap assumption \eqref{E:BOOTSTRAPINTRO}), what matters is the interaction between the hypotheses \ref{A:A1} - \ref{A:A3} on the scale factor and the structure of the error terms. Note that \eqref{E:UJAPPROX} is \emph{not literally an ODE in} $u^j$ since spatial derivatives of the solution are present on the right-hand side as ``error terms.'' Therefore, the approach we have described only allows us to derive improved decay estimates for the below-top-order derivatives of the solution. We remark that we do not use the norm $\mathcal{U}_{N-1}$ in our analysis of the cases $\speed^2 = 0, 1/3.$

\subsubsection{Sharp results in case $\speed^2 = 1/3$} \label{SSS:PURERADIATION}

Our sharp analysis in the case $\speed^2 = 1/3$ is based on the well-known \emph{conformal invariance} of the relativistic Euler equations in this case; see Section \ref{S:PURERADIATION} for more details. Roughly speaking, when $\speed^2 = 1/3,$ one can perform a change-of-time-variable $\frac{d \tau}{d t} = e^{- \Omega(t)},$ $\tau(1) = 1,$ and also a rescaling of the fluid variables in order to translate the problem of interest into an equivalent problem in Minkowski spacetime. Now if assumption \ref{A:A1} holds but $\int_{s = 1}^{\infty} e^{- \Omega(s)} \, d s = \infty,$ then the analysis of Section \ref{S:PURERADIATION} shows that the instability of the background fluid solutions follows easily from the instability results derived in \cite{dC2007}; see Corollary \ref{C:SHOCKSCANFORM}. Furthermore, the arguments given in Section \ref{S:PURERADIATION} could easily have been extended to show the future stability of the background fluid solutions when $\speed^2 = 1/3,$ hypothesis \ref{A:A1} holds, and  $\int_{s = 1}^{\infty} e^{- \Omega(s)} \, d s < \infty.$ However, we instead chose to use the energy-method/continuation principle framework to prove future stability (see Theorem \ref{T:GLOBALEXISTENCE}) since this framework is stable and since there is no known alternative to this framework in the cases $\speed^2 \in [0, 1/3).$

\subsection{Outline of the article}

The remainder of the article can be summarized as follows.

\begin{itemize}
	\item In Section \ref{S:Notation} we introduce some standard notation and conventions.
	\item In Section \ref{S:EULERDECOMP}, we derive an equivalent version of the relativistic Euler equations 
		that is useful for our ensuing analysis. We also introduce some standard PDE matrix-vector notation.
	\item In Section \ref{S:NORMSANDENERGIES} we introduce the Sobolev norms and the related energies that we use
		to analyze solutions.
	\item In Section \ref{S:SOBOLEV}, we derive the Sobolev estimates that play a major role in our derivation
		of differential inequalities for the fluid norms and energies.
	\item In Section \ref{S:NORMVSENERGY} we prove a simple comparison proposition, which shows that the energies we have
		defined can be used to control the norms.
	\item In Section \ref{S:INTEGRALINEQUALITY}, we use the estimates of Section \ref{S:SOBOLEV} to derive differential
		inequalities for the norms and energies. 
	\item In Section \ref{S:GLOBALEXISTENCE}, we use the differential inequalities to prove our main future stability theorem.
	\item In Section \ref{S:PURERADIATION}, we use a well-known result of Christodoulou to show that in the case
		$\speed^2 = 1/3,$ the non-integrability condition $\int_{s = 1}^{\infty} e^{- \Omega(s)} \, d s = 
			\infty$ leads to the nonlinear instability of the explicit fluid solutions.
\end{itemize}

\section{Notation} \label{S:Notation}

\subsection{Index conventions}
Greek ``spacetime'' indices $\alpha, \beta, \cdots$ take on the values $0,1,2,3,$ while Latin ``spatial'' indices $a,b,\cdots$ 
take on the values $1,2,3.$ Pairs of repeated indices are summed over their respective ranges. We lower spacetime indices with
$g_{\mu \nu}$ and raise them with $(g^{-1})^{\mu \nu}.$ 

\subsection{Coordinate systems and differential operators} \label{SS:COORDINATESYSTEMS}
We perform most of our computations relative to the standard rectangular coordinate system $(x^0,x^1,x^2,x^3)$ on $\mathbb{R}^4.$ We often write $t$ instead of $x^0.$ In this coordinate system, the metric $g$ is of the form \eqref{E:METRICFORM}. We often use the symbol $\partial_{\mu}$ to denote the coordinate derivative $\frac{\partial}{\partial x^{\mu}},$ and we often write $\partial_t$ instead of $\partial_0.$ Occasionally, (see e.g. the derivation of \eqref{E:RESCALEDESTIMATE}) we also use the rescaled time variable $\tau$ introduced in Section \ref{S:PURERADIATION}. The corresponding rescaled partial time derivative is denoted by $\partial_{\tau}.$ All spatial derivatives are always taken with respect to the coordinates $(x^1,x^2,x^3).$

If $\vec{\alpha} = (n_1,n_2,n_3)$ is a triplet of non-negative integers, then we define the spatial multi-index coordinate differential operator $\partial_{\vec{\alpha}}$ by $\partial_{\vec{\alpha}} \eqdef \partial_1^{n_1} \partial_2^{n_2} \partial_3^{n_3}.$ We use the notation $|\vec{\alpha}| \eqdef n_1 + n_2 + n_3$ to denote the order of $\vec{\alpha}.$

\begin{align}
	D_{\mu} T_{\mu_1 \cdots \mu_s}^{\nu_1 \cdots \nu_r} = 
		\partial_{\mu} T_{\mu_1 \cdots \mu_s}^{\nu_1 \cdots \nu_r} + 
		\sum_{a=1}^r \Gamma_{\mu \ \alpha}^{\ \nu_{a}} T_{\mu_1 \cdots \mu_s}^{\nu_1 \cdots \nu_{a-1} \alpha \nu_{a+1} \nu_r} - 
		\sum_{a=1}^s \Gamma_{\mu \ \mu_{a}}^{\ \alpha} T_{\mu_1 \cdots \mu_{a-1} \alpha \mu_{a+1} \mu_s}^{\nu_1 \cdots \nu_r}
\end{align} 
denotes the components of the covariant derivative of a spacetime tensorfield $T.$ The Christoffel symbols
$\Gamma_{\mu \ \nu}^{\ \alpha}$ are defined in \eqref{E:CHRISTOFFELDEF}.

$\partial^{(N)} T_{\mu_1 \cdots \mu_s}^{\nu_1 \cdots \nu_r}$ denotes the array containing of all of the $N^{th}$ order \emph{spacetime} coordinate derivatives (including time derivatives) of the component $T_{\mu_1 \cdots \mu_s}^{\nu_1 \cdots \nu_r}.$ $\underpartial^{(N)} T_{\mu_1 \cdots \mu_s}^{\nu_1 \cdots \nu_r}$ denotes the array of all $N^{th}$ order \emph{spatial coordinate} derivatives of the component $T_{\mu_1 \cdots \mu_s}^{\nu_1 \cdots \nu_r}.$ We omit the superscript when $N=1.$

\subsection{Norms} \label{SS:NORMS}
We define the standard Sobolev norm of a function $\big\| f \big\|_{H^N}$ as follows:

\begin{align} \label{E:SobolevNormDef}
	\big\| f \big\|_{H^N} \eqdef
		\bigg( \sum_{|\vec{\alpha}| \leq N} 
		\int_{\mathbb{R}^3} \big|\partial_{\vec{\alpha}} f(t,x^1,x^2,x^3) \big|^2 \,
		d^3x \bigg)^{1/2}.
\end{align}
The above volume form $d^3x$ corresponds to the standard flat metric on $\mathbb{R}^3.$

We denote the $N^{th}$ order homogeneous Sobolev norm of $f$ by

\begin{align}
	\big\| \underpartial^{(N)} f \big\|_{L^2} \eqdef
		\sum_{|\vec{\alpha}| = N} \big\| \partial_{\vec{\alpha}} f \big\|_{L^2}.
\end{align}

If $\mathfrak{K} \subset \mathbb{R}^n$ or $\mathfrak{K} \subset \mathbb{T}^n,$ then $C^N_b(\mathfrak{K})$ denotes the set of $N-$times continuously differentiable functions (either scalar or array-valued, depending on context) on the interior of
$\mathfrak{K}$ with bounded derivatives up to order $N$ that extend continuously to the closure of $\mathfrak{K}.$ 
We define the norm corresponding to this function space by
    
    \begin{equation} \label{E:CbMNormDef}
        |F|_{N,\mathfrak{K}} \eqdef \sum_{|\vec{I}|\leq N} \mbox{ess} \sup_{\cdot \in \mathfrak{K}}
        |\partial_{\vec{I}}F(\cdot)|,
    \end{equation}
    where $\partial_{\vec{I}}$ is a multi-indexed differential operator representing repeated partial differentiation
    with respect to the arguments $\cdot$ of $F,$ which may be either spacetime coordinates or metric/fluid components
    depending on context. When $N=0,$ we use the slightly more streamlined notation
    
    \begin{align} 
    	|F|_{\mathfrak{K}} \eqdef \mbox{ess} \sup_{\cdot \in \mathfrak{K}} |F(\cdot)|.
    \end{align}
    Furthermore, we define
    \begin{align} 
    	|F^{(N)}|_{\mathfrak{K}} \eqdef \sum_{|\vec{I}| = N}|\partial_{\vec{I}} F|_{\mathfrak{K}}.
    \end{align}
    When $\mathfrak{K} = \mathbb{R}^3,$ we sometimes use the more familiar notation 
    
    \begin{align}
    	\| F \|_{L^{\infty}} & \eqdef \mbox{ess} \sup_{x \in \mathbb{R}^3} |F(x)|, \\
    	\| F \|_{C_b^N} & \eqdef \sum_{|\vec{\alpha}| \leq N} \big\| \partial_{\vec{\alpha}} F \|_{L^{\infty}}.
    \end{align}
    
	 	If $A$ is an $m \times n$ (often $4 \times 4$ in this article) array-valued function with entries $A_{jk},$ 
	 	$(1 \leq j \leq m, 1\leq k \leq n),$ then in Section \ref{S:SOBOLEV}, we write e.g.
    $\| A \|_{H^N}$ to denote the $m \times n$ \emph{array} whose entries are $\|A_{jk} \|_{H^N}.$ We use similar notation for 
    other norms of $A.$ 
		
		If $I \subset \mathbb{R}$ is an interval and $X$ is a normed function space, then 
    $C^N(I,X)$ denotes the set of $N$-times continuously differentiable maps from $I$ into $X.$

\subsection{Numerical constants} \label{SS:runningconstants}
$C$ denotes a numerical constant that is free to vary from line to line. We sometimes write e.g. $C(N)$ when we want to explicitly indicate the dependence of $C$ on quantities. We use symbols such as $c,$ $C_*,$ etc., to denote constants that play a distinguished role in the discussion. We write $X \lesssim Y$ when there exists a constant $C > 0$ such that $X \leq CY.$
We write $X \approx Y$ when $X \lesssim Y$ and $Y \lesssim X.$

\section{Alternative Formulation of the Relativistic Euler Equations} \label{S:EULERDECOMP}

In this section, we derive an equivalent version the relativistic Euler equations \eqref{E:EULERINTROP} - \eqref{E:EULERINTROU} under the equation of state $p = \speed^2 \rho;$ we will work with this version for most of the remainder of the article. We then briefly discuss a classical local existence result and a continuation principle. The latter provides sufficient criteria for the solution to avoid forming a singularity.

\subsection{Alternative formulation of the relativistic Euler equations} \label{SS:ALTFORM}

Our alternative formulation of the relativistic Euler equations is captured it the next proposition.

\begin{proposition}[\textbf{Alternative formulation of the Euler equations}] \label{P:DECOMPOSITION}
	Assume that the fluid verifies the equation of state $p = \speed^2 \rho.$ Let 
	
	\begin{align} \label{E:PRESCALED}
		\Rlog \eqdef \ln \left(e^{3(1 + \speed^2)\Omega} \rho/\bar{\rho} \right)
	\end{align}
	be a normalized energy density variable, where $\bar{\rho} > 0$ is the constant corresponding to the background density
	variable $\widetilde{\rho} = \bar{\rho} e^{-3(1 + \speed^2)\Omega}.$ Then in the coordinate system 
	$(t,x^1,x^2,x^3),$ the relativistic Euler equations \eqref{E:EULERINTROP} - \eqref{E:EULERINTROU} are equivalent to the 
	following system of equations in the unknowns $(\Rlog,u^1,u^2,u^3)$ (for $j = 1,2,3$):
	
\begin{subequations}
\begin{align}
	u^{\alpha} \partial_{\alpha} \Rlog + (1 + \speed^2) \Big(\frac{1}{u^0}\Big) u_a \partial_t u^a + (1 + 
		\speed^2) \partial_{a}u^{a} & = - \omega \frac{(1 + \speed^2)}{u^0} g_{ab}u^a u^b, \label{E:FINALEULERP} \\
	u^{\alpha} \partial_{\alpha} u^{j} + \frac{\speed^2}{(1 + \speed^2)}\Pi^{j \alpha} \partial_{\alpha} \Rlog 
		& = \omega (3 \speed^2 - 2) u^0 u^{j}. \label{E:FINALEULERUJ}
\end{align}
\end{subequations}
Above, 

\begin{align*}
u^0 \eqdef (1 + g_{ab}u^a u^b)^{1/2}, \qquad \Pi^{\mu \nu} \eqdef u^{\mu} u^{\nu} + (g^{-1})^{\mu \nu},
\qquad \omega(t) \eqdef \frac{d}{dt} \Omega(t).
\end{align*}

Furthermore, $u^0$ is a solution to the following equation:
	
	\begin{align}
		u^{\alpha} \partial_{\alpha} u^{0}  + \frac{\speed^2}{(1 + \speed^2)}\Pi^{0 \alpha} \partial_{\alpha} \Rlog 
			& =	\omega (3 \speed^2 - 1) g_{ab} u^a u^b. \label{E:U0EQUATION}
	\end{align}

\end{proposition}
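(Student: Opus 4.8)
The plan is to establish the equivalence by direct computation, viewing it as the composition of an invertible change of the density variable with an explicit evaluation of the Levi-Civita connection of $g$ in the coordinates $(t,x^1,x^2,x^3)$. Since the map $\rho \mapsto \Rlog$ in \eqref{E:PRESCALED} is smooth and invertible on the region $\rho > 0$ where \eqref{E:EULERINTROP}--\eqref{E:EULERINTROU} are posed, it is enough to rewrite each of the four equations \eqref{E:EULERINTROP}--\eqref{E:EULERINTROU} in terms of the unknowns $(\Rlog,u^1,u^2,u^3)$, the coordinate derivatives $\partial_\mu$, and the constraint \eqref{E:U0UPPERISOLATED}.

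First I would expand every covariant derivative using the Christoffel symbols of the metric \eqref{E:METRICFORM}, which are recorded in Lemma \ref{L:BACKGROUNDCHRISTOFFEL}; the only nonvanishing ones are $\Gamma_{0\ j}^{\ k} = \omega\,\delta_j^k$ and $\Gamma_{j\ k}^{\ 0} = \omega\, g_{jk}$, with $\omega \eqdef \frac{d}{dt}\Omega$. Three consequences do most of the work: (i) $D_\alpha u^\alpha = \partial_\alpha u^\alpha + 3\omega u^0$, since $\Gamma_{\alpha\ \beta}^{\ \alpha} = \partial_\beta \ln\sqrt{|\det g|} = 3\omega\,\delta_\beta^0$; (ii) $u^\alpha D_\alpha u^\mu = u^\alpha\partial_\alpha u^\mu + \Gamma_{\alpha\ \beta}^{\ \mu}u^\alpha u^\beta$, where the quadratic term equals $2\omega u^0 u^j$ when $\mu = j$ and equals $\omega\, g_{ab}u^a u^b$ when $\mu = 0$; and (iii) by the chain rule applied to \eqref{E:PRESCALED}, $\partial_\alpha \ln\rho = \partial_\alpha \Rlog - 3(1+\speed^2)\omega\,\delta_\alpha^0$. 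Using (i) and (iii) in the continuity equation \eqref{E:EULERINTROP}, the $3\omega u^0$ contributions cancel and one is left with $u^\alpha\partial_\alpha \Rlog + (1+\speed^2)\partial_\alpha u^\alpha = 0$. I would also record, from $(g^{-1})^{00} = -1$, $(g^{-1})^{0j} = 0$ and the normalization \eqref{E:U0UPPERISOLATED}, the identities $\Pi^{0j} = u^0 u^j$ and $\Pi^{00} = (u^0)^2 - 1 = g_{ab}u^a u^b$.

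Granting these, equations \eqref{E:FINALEULERUJ} and \eqref{E:U0EQUATION} follow by collecting the $\omega$-terms in the $\mu = j$ and $\mu = 0$ components of \eqref{E:EULERINTROU}, respectively, after substituting $\Pi^{\alpha\mu}\partial_\alpha\ln\rho = \Pi^{\alpha\mu}\partial_\alpha\Rlog - 3(1+\speed^2)\omega\,\Pi^{0\mu}$. The one genuinely substantive step is deducing \eqref{E:FINALEULERP} from $u^\alpha\partial_\alpha\Rlog + (1+\speed^2)(\partial_t u^0 + \partial_a u^a) = 0$: here I would differentiate the constraint $(u^0)^2 = 1 + g_{ab}u^a u^b = 1 + e^{2\Omega}\delta_{ab}u^a u^b$ in $t$ to get $u^0\partial_t u^0 = \omega\, g_{ab}u^a u^b + u_a\partial_t u^a$, solve for $\partial_t u^0$, and insert the result; the $\partial_t u^a$ term lands exactly as the $(1+\speed^2)(u^0)^{-1}u_a\partial_t u^a$ term in \eqref{E:FINALEULERP}, while the $\omega g_{ab}u^a u^b$ term lands on the right-hand side.

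Finally, for the equivalence I would note that every manipulation above is reversible on $\rho > 0$, so the solution sets of the two systems coincide; and that \eqref{E:U0EQUATION} --- equivalently, the $\mu = 0$ component of \eqref{E:EULERINTROU} --- is not independent: it is recovered from \eqref{E:FINALEULERP}, \eqref{E:FINALEULERUJ} and the algebraic relation \eqref{E:U0UPPERISOLATED}, consistent with the overdeterminacy remark following \eqref{E:PIIINTRO}. I do not expect any real obstacle beyond careful bookkeeping of indices and of the two chain-rule substitutions (for $\ln\rho$ via \eqref{E:PRESCALED} and for $\partial_t u^0$ via the constraint); once Lemma \ref{L:BACKGROUNDCHRISTOFFEL} is in hand, the computation is routine.
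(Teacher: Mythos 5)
Your proposal is correct and follows essentially the same route as the paper's proof: expand the covariant derivatives using Lemma \ref{L:BACKGROUNDCHRISTOFFEL}, apply the chain rule to convert $\ln\rho$ to $\Rlog$ via \eqref{E:PRESCALED}, and substitute $\partial_t u^0$ from the differentiated constraint \eqref{E:U0UPPERISOLATED}. The paper's version is terser (it leaves most of the Christoffel and $\Pi^{0\mu}$ bookkeeping implicit), but the sequence of steps and the identities invoked are the same.
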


\begin{proof}
		To obtain \eqref{E:FINALEULERP}, we first expand the covariant differentiation in \eqref{E:EULERINTROP}
		to deduce the following equation:
		
		\begin{align} \label{E:FIRSTEULERCHRISTOFFEL}
			u^{\alpha} \partial_{\alpha} \ln \rho + (1+ \speed^2) \partial_{\alpha} u^{\alpha} =
				- (1+ \speed^2) \Gamma_{\alpha \ \beta}^{\ \alpha} u^{\beta}.
		\end{align}
		Lemma \ref{L:BACKGROUNDCHRISTOFFEL} implies that 
		$\Gamma_{\alpha \ \beta}^{\ \alpha} u^{\beta} = 3 \omega u^0,$
		while equation \eqref{E:U0UPPERISOLATED} implies that $\partial_t u^0 = \frac{1}{u^0} \big\lbrace u_a \partial_t u^a 
		+ \omega g_{ab}u^a u^b \big\rbrace.$ Equation \eqref{E:FINALEULERP} now follows from these
		identities and the identity $\partial_t \ln \rho = \partial_t \ln \Rlog - 3(1 + \speed^2) \omega.$
		
		Similarly, to obtain \eqref{E:FINALEULERUJ}, we first expand the covariant differentiation in \eqref{E:EULERINTROU} 
		to deduce the following equation: 		
		
		\begin{align}  \label{E:FINALEULERCHRISTOFFEL}
			u^{\alpha} \partial_{\alpha} u^j + \frac{\speed^2}{(1 + \speed^2)} \Pi^{j \alpha} \partial_{\alpha} \ln \rho
				& = - \Gamma_{\alpha \ \beta}^{\ j} u^{\alpha} u^{\beta}.
		\end{align}
		Equation \eqref{E:FINALEULERUJ} now follows from \eqref{E:FINALEULERCHRISTOFFEL}, Lemma \ref{L:BACKGROUNDCHRISTOFFEL},
		and the identity $\frac{\speed^2}{(1 + \speed^2) } \Pi^{j \alpha} \partial_{\alpha} \ln \rho 
		= \frac{\speed^2}{(1 + \speed^2)} \Pi^{j \alpha} \partial_{\alpha} \Rlog 
		- 3 \speed^2 \omega u^0 u^j.$ The proof of \eqref{E:U0EQUATION} is similar, and we omit the details.
		
\end{proof}

\begin{remark} \label{R:BACKGROUNDSOLUTION}
	Note that in terms of the variables $(L,u^1,u^2,u^3),$ the background solution \eqref{E:BACKGROUNDU} takes the form
	
	\begin{align}
		(\widetilde{L},\widetilde{u}^1,\widetilde{u}^2,\widetilde{u}^3) = (0,0,0,0). 
	\end{align}
\end{remark}

Part of our analysis involves solving for $\partial_t \Rlog$ and $\partial_t u^{\mu}$ and treating spatial derivatives as inhomogeneous error terms. This approach can also be used to analyze $\partial_{\vec{\alpha}} \Rlog$ and $\partial_{\vec{\alpha}} u^{\mu},$ for $|\vec{\alpha}| \leq N - 1.$ As a preliminary step in this analysis, we solve for $\partial_t \Rlog$ and $\partial_t u^{\mu}.$

\begin{corollary} [\textbf{Isolated Time Derivatives}] \label{C:ISOLATEPARTIALTPUJ}
Let $(\Rlog,u^1,u^2,u^3)$ be a solution to the relativistic Euler equations \eqref{E:FINALEULERP} - \eqref{E:FINALEULERUJ}, where $u^0$ is defined by \eqref{E:U0UPPERISOLATED}. Then the time derivatives of the fluid variables can be expressed as follows:

\begin{subequations}
\begin{align}
	\partial_t \Rlog  & = \triangle', \label{E:PARTIALTRLOG} \\  
	\partial_t u^0 & = \triangle'^0, \label{E:PARTIALTU0} \\
	\partial_t u^j & = \omega (3 \speed^2 - 2)u^j + \triangle'^j, \label{E:PARTIALTUJ} 
\end{align}
\end{subequations}
where the error terms $\triangle',$ $ \triangle'^0,$ and $ \triangle'^j$ are defined by

\begin{subequations}
\begin{align}
	\triangle' & = \omega (1 + \speed^2)(1 - 3 \speed^2) \bigg\lbrace \frac{\frac{g_{ab}u^a u^b}{(u^0)^2}}
			{1 - \speed^2 \frac{g_{ab}u^a u^b}{(u^0)^2}} \bigg\rbrace \label{E:TRIANGLEPRIMEDEF} \\
	& \ \ + \bigg\lbrace 1 - \speed^2 \frac{g_{ab}u^a u^b}{(u^0)^2} \bigg\rbrace^{-1} 
		\Big\lbrace (\speed^2-1) \frac{u^a}{u^0} \partial_a \Rlog - (1 + \speed^2) \frac{\partial_a u^a}{u^0}  
		+ \frac{(1 + \speed^2)}{(u^0)^3} g_{ab} u^a u^k \partial_k u^b \Big\rbrace, \notag \\
	\triangle'^0 & =  \omega (3 \speed^2 - 1) 
		\bigg\lbrace\frac{\frac{g_{ab}u^a u^b}{u^0}}{1 - \speed^2 \frac{ g_{ab}u^a u^b}{(u^0)^2}}\bigg\rbrace 
		- \frac{\speed^2}{(1 + \speed^2)}\bigg\lbrace \frac{1 - \frac{g_{ab}u^au^b}{(u^0)^2}}{1 - \speed^2 
		\frac{g_{ab}u^au^b}{(u^0)^2}} \bigg\rbrace u^a \partial_a \Rlog \label{E:TRIANGLEPRIME0DEF} \\
		& \ \ + \bigg\lbrace \frac{\speed^2 \frac{g_{ab}u^a u^b}{(u^0)^2}}{1 - \speed^2\frac{g_{ab}u^au^b}{(u^0)^2}} \bigg\rbrace
			\partial_a u^a  
				- \frac{\frac{g_{ab} u^a u^k \partial_k u^b}{(u^0)^2}}{1 - \speed^2\frac{g_{ab}u^au^b}{(u^0)^2}} \ , \notag \\
	\triangle'^j & = \omega \speed^2 (3\speed^2 - 1) u^j 
		\bigg\lbrace\frac{\frac{g_{ab}u^au^b}{(u^0)^2}}{1 - \speed^2 \frac{g_{ab}u^a u^b}{(u^0)^2}} \bigg\rbrace 
		+ \speed^2 u^j \bigg\lbrace \frac{\frac{\partial_a u^a}{u^0} - \frac{g_{ab} u^a u^k \partial_k u^b}{(u^0)^3}}{1 - \speed^2 
		\frac{g_{ab}u^a u^b}{(u^0)^2}} \bigg\rbrace \label{E:TRIANGLEPRIMEJDEF} \\
	& \ \ - \frac{\speed^4}{(1 + \speed^2)} u^j 
			\bigg\lbrace \frac{1 - \frac{g_{ab}u^a u^b}{(u^0)^2}}{1 - \speed^2 \frac{g_{ab}u^a u^b}{(u^0)^2}} \bigg\rbrace
			\frac{u^a}{u^0} \partial_a \Rlog
		- \frac{u^a}{u^0} \partial_a u^j 
		- \frac{\speed^2}{(1 + \speed^2)} \frac{(g^{-1})^{aj} \partial_a \Rlog}{u^0}. \notag
\end{align}
\end{subequations}

\end{corollary}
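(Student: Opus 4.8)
The plan is to regard \eqref{E:FINALEULERP}--\eqref{E:FINALEULERUJ}, after expanding $u^{\alpha}\partial_{\alpha} = u^0 \partial_t + u^a \partial_a$, as a determined linear algebraic system for the time derivatives $\partial_t \Rlog$ and $\partial_t u^j$, with all purely spatial derivatives treated as known right-hand side data. The one subtlety is that the two equations are coupled through these time derivatives: \eqref{E:FINALEULERP} contains $\partial_t u^a$, but only via the contraction $(1+\speed^2)(u^0)^{-1} u_a \partial_t u^a$ (where $u_a \eqdef g_{ab}u^b$), while \eqref{E:FINALEULERUJ} contains $\partial_t \Rlog$ via $\frac{\speed^2}{1+\speed^2}\Pi^{j0}\partial_t \Rlog$, and $\Pi^{j0} = u^j u^0$ because $(g^{-1})^{j0} = 0$ for a metric of the form \eqref{E:METRICFORM}. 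Hence the correct move is to contract \eqref{E:FINALEULERUJ} with $u_j$; this, together with \eqref{E:FINALEULERP}, gives a $2\times 2$ linear system in the scalar unknowns $\partial_t \Rlog$ and $u_a \partial_t u^a$.

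First I would record the elementary identities used throughout. From \eqref{E:METRICFORM}, $\partial_t g_{ab} = 2\omega g_{ab}$ and $\partial_c g_{ab} = 0$; from \eqref{E:U0UPPERISOLATED}, $(u^0)^2 = 1 + g_{ab}u^a u^b$, hence $\partial_t u^0 = (u^0)^{-1}(\omega g_{ab}u^a u^b + u_a \partial_t u^a)$ (already noted in the proof of Proposition \ref{P:DECOMPOSITION}) and $u_j \partial_c u^j = u^0 \partial_c u^0$. Using $(g^{-1})^{j0}=0$ and $u_j(g^{-1})^{jc} = u^c$ one gets the contractions $u_j \Pi^{j0} = (g_{ab}u^a u^b)u^0 = ((u^0)^2-1)u^0$ and $u_j\Pi^{jc} = (1 + g_{ab}u^a u^b)u^c = (u^0)^2 u^c$.

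Then comes the reduction. Expanding \eqref{E:FINALEULERP} gives $u^0 \partial_t \Rlog + (1+\speed^2)(u^0)^{-1} u_a \partial_t u^a = \mathfrak{F} - u^a \partial_a \Rlog - (1+\speed^2)\partial_a u^a$, where $\mathfrak{F}\eqdef -\omega\frac{1+\speed^2}{u^0}g_{ab}u^a u^b$ is the right-hand side of \eqref{E:FINALEULERP}; solving this for $u_a \partial_t u^a$ in terms of $\partial_t \Rlog$ and spatial data and inserting the result into the $u_j$-contraction of \eqref{E:FINALEULERUJ} --- namely $u^0 u_a \partial_t u^a + u^a u^0\partial_a u^0 + \frac{\speed^2}{1+\speed^2}[((u^0)^2-1)u^0 \partial_t \Rlog + (u^0)^2 u^a \partial_a \Rlog] = \omega(3\speed^2-2)u^0((u^0)^2-1)$ --- yields a single scalar equation for $\partial_t \Rlog$. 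Its coefficient simplifies, via $(u^0)^2-1 = g_{ab}u^a u^b$, to $-\frac{(u^0)^3}{1+\speed^2}\big(1 - \speed^2\frac{g_{ab}u^a u^b}{(u^0)^2}\big)$, which is nonzero for $0\le\speed^2\le1$; dividing by it produces \eqref{E:PARTIALTRLOG} with $\triangle'$ precisely as in \eqref{E:TRIANGLEPRIMEDEF}, the common prefactor $\{1 - \speed^2\frac{g_{ab}u^a u^b}{(u^0)^2}\}^{-1}$ being the reciprocal of this coefficient. Substituting $\partial_t \Rlog = \triangle'$ back into the expanded \eqref{E:FINALEULERP} gives $u_a \partial_t u^a$ in closed form; feeding this (and $\triangle'$) into $\partial_t u^0 = (u^0)^{-1}(\omega g_{ab}u^a u^b + u_a\partial_t u^a)$ yields \eqref{E:PARTIALTU0} --- here the $\frac{\mathfrak{F}}{1+\speed^2}$ and $\frac{\omega g_{ab}u^a u^b}{u^0}$ contributions cancel, and the rest reproduces \eqref{E:TRIANGLEPRIME0DEF} (one may instead substitute $\partial_t \Rlog = \triangle'$ directly into \eqref{E:U0EQUATION}, which gives the same thing by the overdeterminacy noted after \eqref{E:PIIINTRO}). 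Finally, writing $\Pi^{j\alpha}\partial_\alpha\Rlog = u^j u^0\partial_t\Rlog + (u^j u^a + (g^{-1})^{ja})\partial_a\Rlog$ in \eqref{E:FINALEULERUJ}, solving for $\partial_t u^j$, substituting $\partial_t\Rlog=\triangle'$, and separating off the linear term $\omega(3\speed^2-2)u^j$ leaves exactly $\triangle'^j$ as in \eqref{E:TRIANGLEPRIMEJDEF}; here the $-\frac{\speed^2}{1+\speed^2}u^j\triangle'$ and $-\frac{\speed^2}{1+\speed^2}\frac{u^j u^a}{u^0}\partial_a\Rlog$ contributions merge, using $1 - \frac{g_{ab}u^a u^b}{(u^0)^2} = (u^0)^{-2}$, into the displayed $\partial_a\Rlog$ and $\partial_a u^a$ terms.

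There is no genuine conceptual obstacle: once \eqref{E:FINALEULERUJ} is contracted with $u_j$, the problem is a linear system with an explicitly invertible coefficient, and the rest is substitution. The only step demanding care is the bookkeeping of the many powers of $u^0$ and of the common denominator $1 - \speed^2\frac{g_{ab}u^a u^b}{(u^0)^2}$ when $\triangle'$ is re-inserted into the equations for $\partial_t u^0$ and $\partial_t u^j$; I would keep errors at bay by computing $u_a\partial_t u^a$ once, in closed form, and reusing that single expression in both subsequent steps, and by systematically applying the identities $(u^0)^2-1 = g_{ab}u^a u^b$ and $1 - \frac{g_{ab}u^a u^b}{(u^0)^2} = (u^0)^{-2}$ to bring expressions to the forms appearing in \eqref{E:TRIANGLEPRIMEDEF}--\eqref{E:TRIANGLEPRIMEJDEF}.
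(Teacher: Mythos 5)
Your proposal is correct and takes the route the paper clearly has in mind: the text just before the corollary says "we solve for $\partial_t \Rlog$ and $\partial_t u^{\mu}$," and the proof itself only says "tedious but simple calculations; we omit the details." You have supplied exactly those details — contract \eqref{E:FINALEULERUJ} with $u_j$, set up and solve the resulting $2\times2$ linear system for $\partial_t \Rlog$ and $u_a\partial_t u^a$, then back-substitute — and the key simplifications you flag (in particular $1 - g_{ab}u^a u^b/(u^0)^2 = (u^0)^{-2}$ and the common factor $1-\speed^2 g_{ab}u^au^b/(u^0)^2$ arising as the determinant) do lead precisely to \eqref{E:TRIANGLEPRIMEDEF}–\eqref{E:TRIANGLEPRIMEJDEF}.
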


\begin{proof}
	The proof consists of tedious but simple calculations; we omit the details.
\end{proof}

\begin{remark}
	Note that certain terms on the right-hand side of \eqref{E:TRIANGLEPRIMEDEF} - \eqref{E:TRIANGLEPRIMEJDEF} vanish when $\speed^2 = 0$ or $\speed^2 = 1/3.$ This vanishing is important in our analysis of these cases.
\end{remark}

In the next lemma, we provide the Christoffel symbols of the metric $g$ relative to our coordinate system. The lemma was used in the proof of Proposition \ref{P:DECOMPOSITION}.

\begin{lemma} \label{L:BACKGROUNDCHRISTOFFEL}
	The non-zero Christoffel symbols of the metric \\ 
	$g = -dt^2 + e^{2 \Omega(t)} \sum_{j=1}^3 (dx^j)^2$ are

	\begin{align} \label{E:BACKGROUNDCHRISTOFFEL}
		\Gamma_{j \ k}^{\ 0} = \Gamma_{k \ j}^{\ 0} = \omega g_{jk}, 
			&& \Gamma_{k \ 0}^{\ j} = \Gamma_{0 \ k}^{\ j}  = \omega \delta_k^j, && (j,k=1,2,3),
	\end{align}
	where
	
	\begin{align}
		\omega \eqdef \frac{d}{dt} \Omega.
	\end{align}
\end{lemma}

\begin{proof}
	The lemma follows via simple computations from the definition
	
	\begin{align} \label{E:CHRISTOFFELDEF}
		\Gamma_{\mu \ \nu}^{\ \alpha} & \eqdef \frac{1}{2} (g^{-1})^{\alpha \lambda}(\partial_{\mu} g_{\lambda \nu} 
		+ \partial_{\nu} g_{\mu \lambda} - \partial_{\lambda} g_{\mu \nu}). 
	\end{align}
\end{proof}

\subsection{Local existence and the continuation principle} \label{SS:LOCALEXISTENCE}

In this section, we state a standard local existence result for the system
\eqref{E:FINALEULERP} - \eqref{E:FINALEULERUJ}.

\begin{theorem}[\textbf{Local Existence}] \label{T:LOCAL}
	Let $N \geq 3$ be an integer. Let $\mathring{\Rlog} = \Rlog|_{t=1} = \ln \big(\rho/\bar{\rho}\big)|_{t=1},$ 
	$\mathring{u}^{j} = u^{j}|_{t=1},$ $(j=1,2,3),$ be initial data for the relativistic Euler equations 
	\eqref{E:FINALEULERP} - \eqref{E:FINALEULERUJ} satisfying 
	
\begin{subequations}	
\begin{align}
		\mathring{\Rlog}  & \in \left \lbrace \begin{array}{l}
	    H^{N-1}, \qquad \speed^2 = 0, \\
	    H^N, \qquad 0 < \speed^2 \leq 1/3, 
	    \end{array}
	    \right. \\
	    \mathring{u}^j & \in H^N,
\end{align}
\end{subequations}
	where $\bar{\rho} > 0$ is a constant. Assume that $\sup_{x \in \mathbb{R}^3} |\mathring{\Rlog}| < \infty.$ Then there exists 
	a real number $T^+ > 1$ such that these data launch a unique classical 
	solution $(\Rlog, u^1, u^2, u^3)$ to the relativistic Euler equations  \eqref{E:FINALEULERP} - \eqref{E:FINALEULERUJ} 
	existing on the spacetime slab $[1, T^+) \times \mathbb{R}^3.$ Relative to the coordinate system
	$(t,x^1,x^2,x^3),$ the solution has the following regularity properties:

\begin{subequations}	
\begin{align}
		\mathring{\Rlog} & \in \left \lbrace \begin{array}{l}
	    C^0([1, T_+) \times \mathbb{R}^3), \qquad \speed^2 = 0, \\
	    C^1([1, T_+) \times \mathbb{R}^3), \qquad 0 < \speed^2 \leq 1/3, 
	   \end{array}
	    \right. \\
	    \mathring{u}^j & \in C^1([1, T_+) \times \mathbb{R}^3),
\end{align}
\end{subequations}

\begin{subequations}
	\begin{align}
		\mathring{\Rlog} & \in \left \lbrace \begin{array}{l}
	    C^0([1, T_+),H^{N-1}), \qquad \speed^2 = 0, \\   
	    C^0([1, T_+),H^{N}), \qquad 0 < \speed^2 \leq 1/3, 
	    \end{array}
	    \right. \\
	    u^0 - 1, \ u^j & \in C^0([1, T_+),H^{N}),
\end{align}	
\end{subequations}
		
	In addition, there exists an open neighborhood $\mathcal{O}$ of
	$(\mathring{\Rlog}, \mathring{u}^{j})$ such that all data belonging to $\mathcal{O}$ launch solutions that also exist on the 
	slab $[1, T_+) \times \mathbb{R}^3$ and that have the same regularity properties as $(\Rlog, u^{\mu}).$ Furthermore, on 
	$\mathcal{O},$ the map $\mbox{data} \rightarrow \mbox{solution}$ is continuous.
	
\end{theorem}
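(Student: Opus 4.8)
The plan is to regard the reformulated system \eqref{E:FINALEULERP}--\eqref{E:FINALEULERUJ} as a first-order quasilinear symmetric hyperbolic system for the array $\mathbf{W} = (\Rlog,u^1,u^2,u^3)$ — with $u^0 = (1 + g_{ab}u^a u^b)^{1/2} = (1 + e^{2\Omega}\delta_{ab}u^a u^b)^{1/2} \ge 1$ a globally smooth function of $(u^1,u^2,u^3)$ — and then to invoke the standard local well-posedness theory for such systems, treating the endpoint cases $\speed^2 = 0$ and $\speed^2 = 1/3$ separately.

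For $0 < \speed^2 \le 1/3$, I would first exhibit a positive-definite symmetrizer casting \eqref{E:FINALEULERP}--\eqref{E:FINALEULERUJ} into symmetric hyperbolic form. This is classical for the relativistic Euler equations under $p = \speed^2 \rho$ with $0 < \speed^2 < 1$, and a symmetrizer is in any case supplied by the energy-current construction of \cite{dC2000} used later in this article (cf. Section \ref{SSS:CURRENTS}); one checks that, on an open $C^0_b$-neighborhood of the background values $\mathbf{W} = (0,0,0,0)$, the matrix multiplying $\partial_t \mathbf{W}$ is symmetric and positive-definite while the matrices multiplying $\partial_a \mathbf{W}$ are symmetric, with all coefficients smooth functions of $\mathbf{W}$ and of $t$ through $\Omega(t),\omega(t)$ (the latter merely $C^0$ by \ref{A:A1}, a harmless time-dependence). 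Since the data are perturbations of a constant state, one works in the affine space $(0,0,0,0) + H^N(\mathbb{R}^3)$; because $N \ge 3 > 3/2 + 1$, the embedding $H^N \hookrightarrow C_b^1$ and the usual iteration/energy-estimate argument produce a time $T_+ > 1$, a unique solution with $\mathbf{W} \in C^0([1,T_+),H^N) \cap C^1([1,T_+),H^{N-1})$ (hence $C^1$ in spacetime, since moreover $\partial_t \mathbf{W}$ is expressible through $\partial_a \mathbf{W} \in H^{N-1} \hookrightarrow C^0_b$), the persistence of an open set $\mathcal{O}$ of data, and continuous dependence of the solution on the data. The hypothesis $\sup_x |\mathring{\Rlog}| < \infty$ guarantees that $\rho = \bar{\rho}\, e^{-3(1 + \speed^2)\Omega} e^{\Rlog} > 0$ on $[1,T_+) \times \mathbb{R}^3$ by continuity of $\Rlog$, so the reformulation stays valid and equivalent to \eqref{E:EULERINTROP}--\eqref{E:EULERINTROU}.

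For $\speed^2 = 1/3$ the same argument applies verbatim (the coefficient simplifications noted after Corollary \ref{C:ISOLATEPARTIALTPUJ} only help). For $\speed^2 = 0$ the system is instead triangular and must be solved in two stages, which is exactly what produces the stated derivative loss in $\Rlog$: equation \eqref{E:FINALEULERUJ} with $\speed^2 = 0$ reduces to $u^{\alpha}\partial_{\alpha} u^j = -2\omega u^0 u^j$, a self-contained quasilinear system for $(u^1,u^2,u^3)$ whose principal part is transport along $u$ and which is symmetric hyperbolic, so the previous step gives $u^j \in C^0([1,T_+),H^N) \cap C^1([1,T_+),H^{N-1})$; with $u$ now fixed, \eqref{E:FINALEULERP} with $\speed^2 = 0$ is a \emph{linear} transport equation for $\Rlog$ along $u$ with inhomogeneity built from $u$ and $\partial u$, and since $\partial u$ lies only in $H^{N-1}$ the linear transport estimate yields $\Rlog \in C^0([1,T_+),H^{N-1})$ — one derivative below $u$ — with $\Rlog$ continuous in spacetime via $H^{N-1} \hookrightarrow C^0_b$. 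Assembling the two stages, and running the same estimates on differences of solutions, gives the regularity, the open neighborhood $\mathcal{O}$, and the continuous dependence in this case as well. The only genuine work is verifying the symmetric-hyperbolic structure and the positivity of its symmetrizer near the background, and organizing the dust case as ``solve $u$, then $\Rlog$'' so that the derivative counts come out as stated; the rest is a direct appeal to the standard theory of quasilinear symmetric hyperbolic systems.
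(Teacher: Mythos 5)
Your argument is essentially the paper's own: the paper disposes of Theorem \ref{T:LOCAL} by pointing to the standard iteration/contraction-mapping theory for quasilinear symmetric hyperbolic systems built from linearized energy estimates (citing \cite{lH1997} and \cite{jS2008a}), and your symmetrizer construction via the energy currents of \cite{dC2000} together with $H^N \hookrightarrow C_b^1$ for $N \ge 3$ is exactly the machinery those references employ. Your two-stage treatment of the dust case $\speed^2 = 0$ (solve the closed transport system for $u^j$ first, then the linear transport equation for $\Rlog$) is a correct and helpful elaboration of why $\Rlog$ lands in $H^{N-1}$, matching what the paper leaves implicit.
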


\begin{proof}
	Theorem \ref{T:LOCAL} can be proved using a standard iteration or contraction mapping argument based on
	energy estimates for linearized equations that are in the spirit of the estimates derived in Sections \ref{S:NORMVSENERGY}
	and \ref{S:INTEGRALINEQUALITY}. See e.g. \cite[Ch. VI]{lH1997}, \cite{jS2008a} for details on how to use such
	energy estimates to deduce local existence.
\end{proof}

In our proof of Theorem \ref{T:GLOBALEXISTENCE} we invoke the following continuation principle, which provides
standard criteria that are sufficient to ensure that a solution to the relativistic Euler equations exists globally in time.

\begin{proposition}[\textbf{Continuation Principle}] \label{P:CONTINUATION}
	Assume the hypotheses of Theorem \ref{T:LOCAL}. Let $T_{max}$ be the supremum over all times $T_+$ such that the 
	solution $(\Rlog,u^{\mu})$ exists on the interval $[1,T_+)$ and has the properties stated in the conclusions of 
	Theorem \ref{T:LOCAL}. Then in the case $\speed^2 = 0,$ if $T_{max} < \infty,$ we have 
	
	\begin{align}
		\lim_{t \to T_{max}^-} \sup_{0 \leq s \leq t} \Bigg\lbrace \| \Rlog(s,\cdot) \|_{L^{\infty}} 
		+ \sum_{j=1}^3 \Big( \| u^j(s,\cdot) \|_{C_b^1} \Big) \Bigg\rbrace 
		= \infty.
	\end{align}
	
	Furthermore, in the cases $0 < \speed \leq 1/3,$ if $T_{max} < \infty,$ we have 
	
	\begin{align}
		\lim_{t \to T_{max}^-} \sup_{0 \leq s \leq t} \Bigg\lbrace 
		\| \Rlog(s,\cdot) \|_{C_b^1} 
		+ \sum_{j=1}^3 \Big( \| u^j(s,\cdot) \|_{C_b^1} \Big) \Bigg\rbrace 
		= \infty.
	\end{align}
	
\end{proposition}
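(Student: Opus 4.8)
The plan is to prove the contrapositive: assuming that the indicated low-regularity norm stays bounded on $[1, T_{max})$, I will show that the full $H^N$-norm of the solution stays bounded on $[1, T_{max})$, hence by the last clause of Theorem \ref{T:LOCAL} (the open neighborhood / uniform time-of-existence statement) the solution extends past $T_{max}$, contradicting maximality. First I would set up the bookkeeping: apply $\partial_{\vec\alpha}$ for $|\vec\alpha| \le N$ to the system \eqref{E:FINALEULERP}--\eqref{E:FINALEULERUJ} (or, in the $\speed^2=0$ case, to the decoupled four-velocity system and then separately to the density equation), obtaining the equations of variation, whose principal part is the same symmetric-hyperbolic operator as the original system with coefficients depending on $\mathbf{W}$ and whose inhomogeneous terms involve only $\partial_{\vec\beta}\mathbf{W}$ for $|\vec\beta|\le|\vec\alpha|$. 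Then I would run the standard energy estimate for this linearized system: pair with the natural energy (essentially $\sum_{|\vec\alpha|\le N}\|\partial_{\vec\alpha}\mathbf W\|_{L^2}^2$, with the $e^{2\Omega}$-weight on the $u^j$-components as in \eqref{E:ENERGYINTRO}), integrate by parts, and use Sobolev--Moser / Gagliardo--Nirenberg estimates to bound the commutator and inhomogeneous terms by $C(\|\mathbf W\|_{C_b^1})\,\mathcal E_N^2(t)$, where the constant $C$ depends only on the $C_b^1$-norm of the solution — this is the key point, that once $\|\mathbf W\|_{C_b^1}$ is controlled, every error term in $\frac{d}{dt}\mathcal E_N^2$ is controlled by $\mathcal E_N^2$ times a coefficient bounded in terms of $\|\mathbf W\|_{C_b^1}$ and the (locally bounded) scale-factor quantities $\Omega(t),\omega(t)$.

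The second main step is to invoke the hypothesis. On $[1,T_{max})$ we are assuming $\sup_{1\le s\le t}\big(\|\Rlog(s)\|_{C_b^1}+\sum_j\|u^j(s)\|_{C_b^1}\big)<\infty$ as $t\to T_{max}^-$ (in the $\speed^2=0$ case, $\|\Rlog\|_{L^\infty}+\sum_j\|u^j\|_{C_b^1}$, which still controls the coefficients of the velocity evolution and of the transport operator in the density equation). Since $T_{max}<\infty$ by assumption, $\Omega$ and $\omega$ are bounded on $[1,T_{max}]$ by \ref{A:A1} and the $C^1$-hypothesis on $e^{\Omega}$. Also one must check that $u^0$ stays bounded away from degeneracy and that the denominators $1-\speed^2 g_{ab}u^au^b/(u^0)^2$ appearing in the $\triangle'$-terms of Corollary \ref{C:ISOLATEPARTIALTPUJ} are bounded below: this follows from $u^0=(1+g_{ab}u^au^b)^{1/2}\ge 1$ together with the $C_b^1$-bound on $u^j$ (which bounds $g_{ab}u^au^b=e^{2\Omega}\delta_{ab}u^au^b$), and $\speed^2\le 1/3<1$. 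Hence the coefficient multiplying $\mathcal E_N^2$ in the differential inequality is an $L^1_t([1,T_{max}))$ function — in fact bounded — so Gronwall's inequality gives $\mathcal E_N(t)\le \mathcal E_N(1)\exp\!\big(\int_1^{T_{max}}(\cdots)\,ds\big)<\infty$, uniformly on $[1,T_{max})$. By the coercivity of the energy (Proposition \ref{P:ENERGYNORMCOMPARISON} — or, at this stage, just the manifest near-equivalence of $\mathcal E_N$ with the weighted $H^N$-norm since the weights are bounded on the finite interval), this bounds $\|\Rlog(t)\|_{H^N}+\sum_j\|u^j(t)\|_{H^N}$ uniformly on $[1,T_{max})$.

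Finally I would close the argument: a uniform $H^N$-bound ($H^{N-1}$ for $\Rlog$ when $\speed^2=0$) on $[1,T_{max})$ with $N\ge 3$ gives, via Sobolev embedding $H^N\hookrightarrow C_b^2$, uniform $C_b^1$ control and in particular a limit of the solution in the relevant function space as $t\to T_{max}^-$; feeding this into Theorem \ref{T:LOCAL} (applied with data prescribed at a time close to $T_{max}$, using the uniform local time of existence on the neighborhood $\mathcal O$) extends the solution strictly beyond $T_{max}$, contradicting the definition of $T_{max}$. Therefore if $T_{max}<\infty$, the low-regularity norm in the statement must blow up. The main obstacle — and the only genuinely nontrivial part — is the energy estimate for the equations of variation: verifying that the commutator terms and the inhomogeneous terms (especially the $\triangle'$-terms, which are rational in $\mathbf W$) can all be absorbed into $C(\|\mathbf W\|_{C_b^1})\,\mathcal E_N^2$ using Sobolev--Moser estimates on $\mathbb R^3$, and handling the decoupled structure and loss of one derivative for $\rho$ in the $\speed^2=0$ case. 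Everything else is soft. Since these energy estimates are carried out in detail (for the global problem, which is strictly harder) in Sections \ref{S:NORMVSENERGY}--\ref{S:INTEGRALINEQUALITY}, one can alternatively cite those computations, noting that on a finite interval the scale-factor weights are harmless and no use of the integrability hypotheses \ref{A:A2}--\ref{A:A3} is needed.
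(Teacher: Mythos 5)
Your argument is the standard continuation principle for quasilinear symmetric hyperbolic systems (bounded low-regularity norm $\Rightarrow$ energy estimate with Gronwall gives $H^N$ control $\Rightarrow$ local existence with uniform time extends past $T_{max}$), which is precisely what the paper intends: its "proof" consists only of a citation to \cite{lH1997}, \cite{jS2008b} plus a remark on the decoupled $\speed^2 = 0$ structure, and you have correctly filled in that scheme, including the observation that on $[1,T_{max}]$ the scale-factor weights $\Omega,\omega$ are bounded so no use of \ref{A:A2}--\ref{A:A3} is required. One small caveat you already flagged and handled correctly: Proposition \ref{P:ENERGYNORMCOMPARISON} as stated requires $\fluidnorm{N}\le\epsilon$, so in the continuation-principle context one should verify coercivity of $\dot{J}^0$ directly under the $C_b^1$ bound (it holds for all $\speed^2<1$ whenever $u^0$ is bounded, which follows from the $C_b^1$ bound on $u^j$ and the boundedness of $\Omega$ on $[1,T_{max}]$) rather than invoke that proposition verbatim.
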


\begin{proof}
	See e.g. \cite{lH1997}, \cite{jS2008b} for the ideas behind a proof. The case $\speed^2 = 0$ is special
	because in this case, the evolution of the $u^j$ decouples from that of $\Rlog,$ and furthermore,
	the evolution equation \eqref{E:FINALEULERP} is \emph{linear} in $\Rlog.$
\end{proof}

\subsection{Matrix-vector notation}

It is convenient to write the system \eqref{E:FINALEULERP} - \eqref{E:FINALEULERUJ} in abbreviated form using standard PDE matrix-vector notation:

\begin{align} \label{E:MATRIXVECTOREULER}
	A^{\beta} \partial_{\beta} \mathbf{W} = \mathbf{b}.
\end{align}
Here, $\mathbf{W}$ is a fluid variable column array, and $\mathbf{b}$ is a column array of inhomogeneous terms:

\begin{align} \label{E:arraydefs}
	\mathbf{W} \eqdef 
		\left( \begin{array}{c}
			\Rlog \\
			u^1 \\
			u^2 \\
			u^3
		\end{array} \right), \qquad 
		\mathbf{b} \eqdef 	\left( \begin{array}{c}
			- \omega \frac{(1 + \speed^2)}{u^0} g_{ab}u^a u^b \\
			\omega (3 \speed^2 - 2) u^0 u^1 \\
			\omega (3 \speed^2 - 2) u^0 u^2 \\
			\omega (3 \speed^2 - 2) u^0 u^3
		\end{array} \right).
\end{align}	
The $A^{\mu}$ are $4 \times 4$ matrices defined by

\begin{subequations}
\begin{align}
	A^0 & =   \begin{pmatrix}
                        u^0 & (1 + \speed^2) \frac{u_1}{u^0}  & (1 + \speed^2) \frac{u_2}{u^0}  
                        	& (1 + \speed^2) \frac{u_3}{u^0}  \\
                       	\Big( \frac{\speed^2}{(1 + \speed^2)} \Big) \Pi^{10} & u^0 & 0 & 0 \\
                        \Big( \frac{\speed^2}{(1 + \speed^2)} \Big) \Pi^{20} & 0 & u^0 & 0\\
                        \Big( \frac{\speed^2}{(1 + \speed^2)} \Big) \Pi^{30} & 0 & 0 & u^0 \\
                    \end{pmatrix}, \label{E:A0def} \\
 A^1 & =   \begin{pmatrix} \label{E:A1def}
                        u^1 & (1 + \speed^2) & 0  & 0  \\
                       	\frac{\speed^2}{(1 + \speed^2)} \Pi^{11} & u^1 & 0 & 0 \\
                        \frac{\speed^2}{(1 + \speed^2)} \Pi^{21} & 0 & u^1 & 0\\
                        \frac{\speed^2}{(1 + \speed^2)} \Pi^{31} & 0 & 0 & u^1 \\
                    \end{pmatrix},                   
\end{align}
\end{subequations}
and analogously for $A^2,$ $A^3.$ Furthermore, for later use, we calculate that $\mbox{det}(A^0) = (u^0)^2 \Big\lbrace \overbrace{(u^0)^2(1-\speed^2) + \speed^2}^{(u^0)^2 - \speed^2 \Pi^{00}} \Big\rbrace,$

\begin{align} \label{E:A0inverse}
	(&A^0)^{-1} = \big\lbrace (u^0)^2 - \speed^2 \Pi^{00} \big\rbrace^{-1} \\ 
										& \times \begin{pmatrix}
                       	u^0 & -(1 + \speed^2) \frac{u_1}{u^0} & (1 + \speed^2) \frac{u_2}{u^0} 
                       		& (1 + \speed^2) \frac{u_3}{u^0}  \\
                       	- \frac{\speed^2}{(1 + \speed^2)} \Pi^{10} & 
                       		u^0 - d_1 
                       		& \frac{\speed^2}{(u^0)^2} \Pi^{10}u_2  & \frac{\speed^2}{(u^0)^2} \Pi^{10}u_3 \\
                        - \frac{\speed^2}{(1 + \speed^2)} \Pi^{20} & \frac{\speed^2}{(u^0)^2} \Pi^{20}u_1 
                        	& u^0 - d_2 & 
                        		\frac{\speed^2}{(u^0)^2} \Pi^{20}u_3 \\
                        - \frac{\speed^2}{(1 + \speed^2)} \Pi^{30} & 
                        	\frac{\speed^2}{(u^0)^2} \Pi^{30}u_1 & \frac{\speed^2}{(u^0)^2} \Pi^{30}u_2 
                        	& u^0 - d_3 \\
                    \end{pmatrix}, \notag
\end{align}
and $d_1 \eqdef \frac{\speed^2}{(u^0)^2}\Big(\Pi^{20}u_2 + \Pi^{30}u_3\Big),$
$d_2 \eqdef \frac{\speed^2}{(u^0)^2}\Big(\Pi^{10}u_1 + \Pi^{30}u_3\Big),$
$d_3 \eqdef \frac{\speed^2}{(u^0)^2}\Big(\Pi^{10}u_1 + \Pi^{20}u_2 \Big).$

\section{Norms, Energies, and the Equations of Variation} \label{S:NORMSANDENERGIES}
 
In this section, we define some Sobolev norms and related energies, all of which are used in the proof of our main stability theorem. Our choice of norms is motivated in part by the continuation principle (Proposition \ref{P:CONTINUATION}); with the help of Sobolev embedding, our norms will control the quantities appearing in the statement of the continuation principle. The energies are introduced in order to control the up-to-top-order spatial derivatives of the solution. Unlike the Sobolev norm $\fluidnorm{N},$ the energy's time derivative can be controlled in terms of the energy itself with the help of the divergence theorem. This is because our energy is defined with the help of \emph{energy currents} $\dot{J}^{\mu},$ which are  solution-dependent, coercive vectorfields whose divergence can be controlled; see the discussion in Section \ref{SSS:CURRENTS}. We also introduce the equations of variation, which are the PDEs verified by the spatial derivatives of the solution. The structure of the equations of variation plays an important role in our derivation of estimates for the spatial derivatives.

\subsection{Norms}

\begin{definition}  [\textbf{Norms}] \label{D:NORMS}
	Let $N$ be a positive integer, let $\mathbf{W} \eqdef (\Rlog,u^1,u^2,u^3)^T$ be the array of fluid 
	variables. In the cases $0 < \speed^2 < 1/3,$ we define the lower-order fluid velocity norm $\mathcal{U}_{N-1}(t) \geq 0$ by
	
	\begin{align}
		\mathcal{U}_{N-1} & \eqdef e^{\Omega}\DecayFunction(\Omega) 
			\Big(\sum_{j=1}^3 \| u^j \|_{H^{N-1}}^2 \Big)^{1/2} =  e^{\Omega}\DecayFunction(\Omega)
		\Big(\sum_{j=1}^3 \sum_{|\vec{\alpha}| \leq N - 1} \int_{\mathbb{R}^3} (\partial_{\vec{\alpha}} u^j)^2 \, d^3 x
			\Big)^{1/2}, \label{E:UNMINUSONEDEF} 
	\end{align}
	
	In the cases $0 \leq \speed^2 \leq 1/3,$ we define the fluid norm $\fluidnorm{N}(t) \geq 0$ by
	
	\begin{align}
	\fluidnorm{N} & \eqdef \left\lbrace \begin{array}{ll} 
		\| \Rlog \|_{H^{N-1}} + e^{2\Omega} \sum_{j=1}^3 \| u^j \|_{H^N}, & \speed^2 = 0, \\
	   \|\Rlog \|_{H^N} + e^{\Omega} \sum_{j=1}^3 \| u^j \|_{H^N}
	    + \mathcal{U}_{N-1}, & 0 < \speed^2 < 1/3, \\
	    \|\Rlog \|_{H^N} + e^{\Omega} \sum_{j=1}^3 \| u^j \|_{H^N}, & \speed^2 = 1/3.
		\end{array} \right. \label{E:FLUIDNORMDEF}
	  \end{align}
	Above, $\DecayFunction$ is the function from the hypotheses \ref{A:A2} - \ref{A:A3} of Section \ref{S:INTRO}.
	
	In the case $\speed^2 = 0,$ we also define the fluid velocity norm $\mathcal{S}_{N;velocity}(t) \geq 0$ by
	
	\begin{align}
		\mathcal{S}_{N;velocity} & \eqdef e^{2\Omega} \sum_{j=1}^3 \| u^j \|_{H^N}.  \label{E:FLUIDNORMDEFVELOCITYDUST} 
	\end{align}
\end{definition}

\begin{remark} \label{R:QREMARK}
	The proof of our main stability theorem will show that all of the norms remain uniformly small for all future times if they 
	are initially small. Furthermore, in the cases $0 < \speed^2 < 1/3,$ the boundedness of $\mathcal{U}_{N-1}$ 
	shows that the lower-order derivatives of $u^j$ have an $L^2$ norm that decays at least as fast as $e^{- \Omega} 
	\DecayFunction^{-1}(\Omega),$ while  the boundedness of $\fluidnorm{N}$ shows only that the top-order derivatives (i.e. 
	$|\vec{\alpha}| = N$) of $u^j$ decay at least as fast as $e^{- \Omega};$ see Section \ref{SSS:DIVERGENCECOMMENTS} for a 
	discussion of the importance of the improved decay for the lower-order derivatives of the $u^j.$ We remark that in the 
	special case $\speed^2 = 0,$ the boundedness of $\fluidnorm{N}$ shows that all spatial derivatives of $u^j$ decay in $L^2$ at 
	least as fast as $e^{-2\Omega}.$ Note also that in the case $\speed^2 = 1/3,$ we do not prove an improved decay rate for the 
	lower-order derivatives of $u^j$
	(and hence the $\mathcal{U}_{N-1}$ norm is not needed for the analysis in this case).
	Our inability to show improved decay in this case is intimately connected to the fact that when $\speed^2 = 1/3,$ the 
	$-(3 \speed^2 - 2) \omega u^j$ term on the right-hand side of equation \eqref{E:PARTIALTUJ} suggests that we can only prove 
	that the lower-order derivatives of $u^j$ decay like $e^{-\Omega}$ (i.e., the same decay as for the top-order derivatives).
\end{remark}

\subsection{The equations of variation and energies for the fluid variables}\label{SS:ENERGY}
In this section, we define the fluid energy. This energy, which depends on the up-to-top-order spatial derivatives $(\partial_{\vec{\alpha}}\Rlog,\partial_{\vec{\alpha}} u^1, \partial_{\vec{\alpha}} u^2, \partial_{\vec{\alpha}} u^3)$ $|\vec{\alpha}| \leq N,$ plays a central role in the proof of our main stability theorem. We note the following previously mentioned exception to the preceding sentence: when $\speed^2 = 0$ we will only be able to control $\partial_{\vec{\alpha}}\Rlog$ for $|\vec{\alpha}| \leq N - 1.$ In order to study the evolution of the solution's derivatives, we will have to commute the equations \eqref{E:FINALEULERP} - \eqref{E:FINALEULERUJ} (or equivalently equation \eqref{E:MATRIXVECTOREULER}) with the operator $\partial_{\vec{\alpha}}.$ The quantities $(\partial_{\vec{\alpha}}\Rlog,\partial_{\vec{\alpha}} u^1, \partial_{\vec{\alpha}} u^2, \partial_{\vec{\alpha}} u^3)$ verify linear (in $(\partial_{\vec{\alpha}}\Rlog,\partial_{\vec{\alpha}} u^1, \partial_{\vec{\alpha}} u^2, \partial_{\vec{\alpha}} u^3)$) PDEs with principal coefficients that depend on the solution and inhomogeneous terms that depend on $(\partial_{\vec{\beta}}\Rlog,\partial_{\vec{\beta}} u^1, \partial_{\vec{\beta}} u^2, \partial_{\vec{\beta}} u^3),$ $|\vec{\beta}| \leq |\vec{\alpha}|;$ see Lemma \ref{L:EOVINHOMOGENEOUS} for the details. We refer to this system of PDEs as the \emph{equations of variation}, while the unknowns $(\dot{\Rlog}, \dot{u}^1, \dot{u}^2, \dot{u}^3) \eqdef (\partial_{\vec{\alpha}}\Rlog,\partial_{\vec{\alpha}} u^1, \partial_{\vec{\alpha}} u^2, \partial_{\vec{\alpha}} u^3)$ are called the \emph{variations}. More specifically, we define the equations of variation in the unknowns 
$(\dot{\Rlog}, \dot{u}^1, \dot{u}^2, \dot{u}^3)$ corresponding to $(\Rlog,u^1,u^2,u^3)$ as follows:

\begin{center}
	{\large \textbf{Equations of Variation}}
\end{center}

\begin{subequations}
\begin{align}
	u^{\alpha} \partial_{\alpha} \dot{\Rlog} 
		+ (1 + \speed^2) \Big(\frac{1}{u^0}\Big) u_a \partial_t \dot{u}^a + (1 + \speed^2) 
		\partial_a \dot{u}^a & = \mathfrak{F}, \label{E:EOV1} \\
	u^{\alpha} \partial_{\alpha} \dot{u}^j + \frac{\speed^2}{(1 + \speed^2)}\Pi^{j \alpha} \partial_{\alpha} \dot{\Rlog}
		& = \omega (3 \speed^2 - 2)u^0 \dot{u}^j + \mathfrak{G}^j. \label{E:EOV2}
\end{align}
\end{subequations}
The terms $\mathfrak{F},$ $\omega (3 \speed^2 - 2)u^0 \dot{u}^j,$ and $\mathfrak{G}^j$ denote the inhomogeneous terms that arise from commuting \eqref{E:FINALEULERP} - \eqref{E:FINALEULERUJ} with $\partial_{\vec{\alpha}}.$ Note that we have split the inhomogeneous term in \eqref{E:EOV2} into two parts; the $\omega (3 \speed^2 - 2)u^0 \dot{u}^j$ term is primarily responsible for creating decay in $\dot{u}^j,$ while $\mathfrak{G}^j$ will be shown to be an error term.

Using matrix-vector notation, we can rewrite \eqref{E:EOV1}- \eqref{E:EOV2} as  

\begin{align} \label{E:EOVMATRIXVECTOR}
	A^{\beta} \partial_{\beta} \dot{\mathbf{W}} = \mathbf{I},
\end{align}
where them matrices $A^{\mu}$ are defined in \eqref{E:A0def} - \eqref{E:A1def}, and

\begin{align}
	\dot{\mathbf{W}} & \eqdef (\dot{\Rlog}, \dot{u}^1, \dot{u}^2, \dot{u}^3)^T, \\
	\mathbf{I} & \eqdef \omega (3 \speed^2 - 2)u^0 \big(0, \dot{u}^1, \dot{u}^2, \dot{u}^3 \big)^T
		+ (\mathfrak{F}, \mathfrak{G}^1, \mathfrak{G}^2, \mathfrak{G}^3)^T.
\end{align}

To each variation $(\dot{u}^1, \dot{u}^2, \dot{u}^3),$ we associate a quantity $\dot{u}^0$ defined by

\begin{align} \label{E:DOT0INTERMSOFDOTJ}
	\dot{u}^0 \eqdef  \frac{1}{u^0} g_{ab} u^a \dot{u}^b.
\end{align}
This quantity appears below in the expression \eqref{E:ENERGYCURRENT}, which defines our fluid energy current. 
The importance of the definition \eqref{E:DOT0INTERMSOFDOTJ} is that it leads to the identity $g_{\alpha \beta}u^{\alpha} \dot{u}^{\beta}=0,$ which is essential for the derivation of the divergence identity \eqref{E:DIVDOTJ} below. In our analysis, we will need the following lemma, which essentially states that $\dot{u}^0$ is a solution to a linearization of \eqref{E:U0EQUATION} around $(\Rlog,u^1,u^2,u^3).$

\begin{lemma} \label{L:dot0equation}
Assume that $(\dot{\Rlog},\dot{u}^1, \dot{u}^2, \dot{u}^3)$ is a solution to the equations of variation \eqref{E:EOV1} - \eqref{E:EOV2} corresponding to $(\Rlog,u^1,u^2,u^3),$ and let $\dot{u}^0 \eqdef \frac{1}{u^0} u_a \dot{u}^a$ be as defined in \eqref{E:DOT0INTERMSOFDOTJ}. Then $\dot{u}^0$ verifies the following equation:

\begin{align}
	u^{\alpha} \partial_{\alpha} \dot{u}^0 + \frac{\speed^2}{(1 + \speed^2)}\Pi^{0 \alpha} \partial_{\alpha} \dot{\Rlog}
		& = \mathfrak{G}^0, \label{E:DOTU0EQUATION} 
\end{align}
where

\begin{align}
	\mathfrak{G}^0 & = g_{ab}\bigg[u^{\nu} \partial_{\nu} \Big(\frac{u^a}{u^0}\Big)\bigg] \dot{u}^b 
		+ 3 \speed^2 \omega \dot{u}^0
		+ 2 \omega (u^0 - 1)\dot{u}^0
		+ \Big(\frac{1}{u^0}\Big) g_{ab} u^a \mathfrak{G}^b.
		\label{E:dotu0G0def}
\end{align}

\end{lemma}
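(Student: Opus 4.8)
\textbf{Proof proposal for Lemma \ref{L:dot0equation}.}
The plan is to derive equation \eqref{E:DOTU0EQUATION} for $\dot{u}^0$ by directly differentiating its definition \eqref{E:DOT0INTERMSOFDOTJ} along the flow of $u^{\alpha}$ and then substituting in the equations of variation \eqref{E:EOV1}--\eqref{E:EOV2}, mimicking the way equation \eqref{E:U0EQUATION} was obtained from \eqref{E:FINALEULERP}--\eqref{E:FINALEULERUJ} in Proposition \ref{P:DECOMPOSITION}. First I would apply $u^{\alpha}\partial_{\alpha}$ to $\dot{u}^0 = \frac{1}{u^0} g_{ab} u^a \dot{u}^b$, using the Leibniz rule to distribute the derivative over the three factors $\frac{1}{u^0}$, $u^a$ (with the fixed metric coefficient $g_{ab} = e^{2\Omega}\delta_{ab}$, so that $u^{\alpha}\partial_{\alpha}g_{ab} = 2\omega g_{ab}$), and $\dot{u}^b$. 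This produces a term $\frac{1}{u^0}g_{ab}u^a (u^{\nu}\partial_{\nu}\dot{u}^b)$, which we replace using \eqref{E:EOV2}, a term $g_{ab}\big[u^{\nu}\partial_{\nu}(u^a/u^0)\big]\dot{u}^b$, and a term $\frac{1}{u^0}(u^{\nu}\partial_{\nu}g_{ab})u^a\dot{u}^b = \frac{2\omega}{u^0}g_{ab}u^a\dot{u}^b = 2\omega\dot{u}^0$.

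Next I would collect the contributions coming from substituting \eqref{E:EOV2}. Contracting \eqref{E:EOV2} with $\frac{1}{u^0}g_{aj}u^a$ (i.e.\ with $\frac{u_j}{u^0}$) gives, on the left, the transport term above plus $\frac{\speed^2}{(1+\speed^2)}\frac{u_j}{u^0}\Pi^{j\alpha}\partial_{\alpha}\dot{\Rlog}$; here one uses the algebraic identity $\frac{u_j}{u^0}\Pi^{j\alpha} = u^0 u^{\alpha} - (g^{-1})^{0\alpha}u^0 \cdot(\text{something})$ — more precisely, since $u_{\mu}\Pi^{\mu\alpha} = 0$ and $u_0 = -u^0$, we get $u_j\Pi^{j\alpha} = -u_0\Pi^{0\alpha} = u^0\Pi^{0\alpha}$, hence $\frac{u_j}{u^0}\Pi^{j\alpha} = \Pi^{0\alpha}$. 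This is exactly what reconstructs the $\frac{\speed^2}{(1+\speed^2)}\Pi^{0\alpha}\partial_{\alpha}\dot{\Rlog}$ term on the left-hand side of \eqref{E:DOTU0EQUATION}. On the right, the same contraction turns $\omega(3\speed^2-2)u^0\dot{u}^j$ into $\omega(3\speed^2-2)u^0\dot{u}^0$ and $\mathfrak{G}^j$ into $\frac{1}{u^0}g_{ab}u^a\mathfrak{G}^b$. Then I would reorganize $\omega(3\speed^2-2)u^0\dot{u}^0 + 2\omega\dot{u}^0 = \omega(3\speed^2 u^0 - 2u^0 + 2)\dot{u}^0 = 3\speed^2\omega u^0\dot{u}^0 + 2\omega(1-u^0)\dot{u}^0$, and further split $3\speed^2\omega u^0\dot{u}^0 = 3\speed^2\omega\dot{u}^0 + 3\speed^2\omega(u^0-1)\dot{u}^0$; combining the $2\omega(1-u^0)$ piece appropriately (the stated formula groups it as $+2\omega(u^0-1)\dot{u}^0$, which I would need to reconcile by carefully tracking signs — see below). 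Assembling all pieces yields $\mathfrak{G}^0$ in the form \eqref{E:dotu0G0def}.

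The main obstacle I anticipate is bookkeeping the exact split of the $\omega$-proportional terms so that they land in precisely the form $3\speed^2\omega\dot{u}^0 + 2\omega(u^0-1)\dot{u}^0$ claimed in \eqref{E:dotu0G0def}; this requires being careful about whether the metric-derivative term $u^{\nu}\partial_{\nu}g_{ab}$ contributes $+2\omega\dot{u}^0$ or whether part of it is already absorbed into the $g_{ab}[u^{\nu}\partial_{\nu}(u^a/u^0)]\dot{u}^b$ term, depending on exactly how one groups the Leibniz expansion (one could instead keep $g_{ab}$ inside and write $u^{\nu}\partial_{\nu}(g_{ab}u^a/u^0)$). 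There is also a subtlety in verifying $\frac{u_j}{u^0}\Pi^{j\alpha} = \Pi^{0\alpha}$ cleanly, which rests on $u_{\mu}\Pi^{\mu\alpha} = 0$ together with the normalization \eqref{E:UNORMALIZED}. Once these identifications are pinned down, the remainder is the same routine algebra that was suppressed in the proofs of Proposition \ref{P:DECOMPOSITION} and Corollary \ref{C:ISOLATEPARTIALTPUJ}. Given the paper's style, I would expect to state the result and write ``the proof follows from differentiating \eqref{E:DOT0INTERMSOFDOTJ}, substituting \eqref{E:EOV2}, and using the identity $u_{\mu}\Pi^{\mu\nu}=0$; we omit the tedious details,'' after carrying out enough of the computation to confirm the constants.
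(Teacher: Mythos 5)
Your approach is the same as the paper's: Leibniz-expand $u^{\alpha}\partial_{\alpha}\dot{u}^0$ (the paper's intermediate identity \eqref{E:dotu0equationidentity1}), contract \eqref{E:EOV2} against $u_j$ and then multiply by $1/u^0$ (your one-step contraction against $u_j/u^0$), and use $u_j\Pi^{j\alpha} = u^0\Pi^{0\alpha}$, which the paper also deduces from $u_{\mu}\Pi^{\mu\alpha}=0$ and $u_0=-u^0$. Both the decomposition and the key algebraic identity match; there is no genuinely different route here.

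There is, however, one concrete slip. The parenthetical claim $u^{\alpha}\partial_{\alpha}g_{ab} = 2\omega g_{ab}$ is missing a factor of $u^0$. Since $g_{ab}=e^{2\Omega(t)}\delta_{ab}$ depends on $t$ alone, the directional derivative along $u$ is
\[
u^{\alpha}\partial_{\alpha}g_{ab} = u^0\,\partial_t g_{ab} = 2\omega\,u^0\,g_{ab},
\]
so the metric-derivative piece of the Leibniz expansion is $\frac{1}{u^0}(u^{\nu}\partial_{\nu}g_{ab})u^a\dot{u}^b = 2\omega\,g_{ab}u^a\dot{u}^b = 2\omega\,u^0\dot{u}^0$, not $2\omega\dot{u}^0$. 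This is precisely the term $2\omega g_{ab}u^a\dot{u}^b$ that appears in the paper's \eqref{E:dotu0equationidentity1}, and the missing $u^0$ is the source of the sign mismatch you flagged at the end: you should be combining $\omega(3\speed^2-2)u^0\dot{u}^0$ with $2\omega u^0\dot{u}^0$, not with $2\omega\dot{u}^0$. Once this is restored, the rest of your plan goes through by the routine bookkeeping you described; as you anticipated, it is worth tracking every $u^0$ factor carefully when dividing the contracted equation by $u^0$ (note that the paper's displayed \eqref{E:dotu0equationidentity2} appears itself to have dropped a $u^0$ on its right-hand side when contracting the $\omega(3\speed^2-2)u^0\dot{u}^j$ term against $u_j$, so the precise grouping of the $\omega$-proportional pieces in \eqref{E:dotu0G0def} merits the double-check you proposed).
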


\begin{proof}
	By the definition of $\dot{u}^0,$ the left-hand side of \eqref{E:DOTU0EQUATION} is equal to
	
	\begin{align} \label{E:dotu0equationidentity1}
		\frac{u_a}{u^0} u^{\alpha} \partial_{\alpha} \dot{u}^a + \frac{\speed^2}{(1 + \speed^2)}\Pi^{0 \alpha} \partial_{\alpha} 
			\dot{\Rlog}
		+ g_{ab} \bigg[u^{\alpha} \partial_{\alpha} \Big(\frac{u^a}{u^0}\Big)\bigg] \dot{u}^b 
		+ 2 \omega g_{ab} u^a \dot{u}^b.
	\end{align}
	
	On the other hand, contracting equation \eqref{E:EOV2} against $u_j$ and using the identity 
	$u_a \Pi^{a \alpha} = - u_0 \Pi^{0 \alpha} = u^0 \Pi^{0 \alpha},$ we conclude that
	
	\begin{align} \label{E:dotu0equationidentity2}
		u_a u^{\alpha} \partial_{\alpha} \dot{u}^a  + u^0 \frac{\speed^2}{(1 + \speed^2)}\Pi^{0 \alpha} \partial_{\alpha} 
		\dot{\Rlog} = (3 \speed^2 - 2)\omega g_{ab} u^a \dot{u}^b + g_{ab} u^a \mathfrak{G}^b.
	\end{align}
	Multiplying \eqref{E:dotu0equationidentity2} by $\frac{1}{u^0}$ and using \eqref{E:dotu0equationidentity1},
	we arrive at \eqref{E:DOTU0EQUATION}.
\end{proof}

In the next lemma, we investigate the structure of the inhomogeneous terms in the equations of variation verified by a solution's derivatives $(\partial_{\vec{\alpha}} \Rlog, \partial_{\vec{\alpha}} u^1, \partial_{\vec{\alpha}}u^2, \partial_{\vec{\alpha}}u^3)^T.$ We again split the inhomogeneous terms into a decay-inducing piece 
$\mathbf{b}_{\vec{\alpha}}$ and a small error term $\mathbf{b}_{\triangle \vec{\alpha}}.$

\begin{lemma} \label{L:EOVINHOMOGENEOUS}
Let $\mathbf{W} \eqdef (\Rlog, u^1,u^2,u^3)^T$ be a solution to the relativistic Euler equations \eqref{E:MATRIXVECTOREULER}, 
i.e., $A^{\beta} \partial_{\beta} \mathbf{W} = \mathbf{b},$ \\ 
$\mathbf{b} \eqdef \omega \Big(- \frac{(1 + \speed^2)}{u^0} g_{ab}u^a u^b, (3 \speed^2 - 2) u^0 u^1, (3 \speed^2 - 2) u^0 u^2, (3 \speed^2 - 2) u^0 u^3 \Big)^{T}.$ Then $(\dot{\Rlog},\dot{u}^1,\dot{u}^2,\dot{u}^3)^T $ $\eqdef (\partial_{\vec{\alpha}} \Rlog, \partial_{\vec{\alpha}} u^1, \partial_{\vec{\alpha}}u^2, \partial_{\vec{\alpha}}u^3)^T$ is a solution to the equations of variation \eqref{E:EOVMATRIXVECTOR} with an inhomogeneous term $\mathbf{I}$ that can be expressed as follows:

\begin{align} \label{E:Ialphadecomp}
	\mathbf{I} \eqdef \mathbf{b}_{\vec{\alpha}} + \mathbf{b}_{\triangle \vec{\alpha}},
\end{align}
where

\begin{subequations}
\begin{align} \label{E:bdef}
	\mathbf{b}_{\vec{\alpha}} & \eqdef 
		\omega (3 \speed^2 - 2)u^0 \Big(0, \partial_{\vec{\alpha}}u^1, \partial_{\vec{\alpha}} u^2, \partial_{\vec{\alpha}} u^3 
			\Big)^T, \\
		\mathbf{b}_{\triangle \vec{\alpha}} & \eqdef \big(\mathfrak{F}_{\vec{\alpha}}, \mathfrak{G}_{\vec{\alpha}}^1, 
		\mathfrak{G}_{\vec{\alpha}}^2, \mathfrak{G}_{\vec{\alpha}}^3 \big)^T
		= \big\lbrace \partial_{\vec{\alpha}} \mathbf{b} - \mathbf{b}_{\vec{\alpha}} \big\rbrace
		+ \Big\lbrace A^0 \partial_{\vec{\alpha}} \big[ (A^0)^{-1}\mathbf{b} \big] 
		- \partial_{\vec{\alpha}} \mathbf{b} \Big\rbrace \label{E:FalphamathfrakGjalphainhomogeneousterms}  \\
		& \ \ + A^0 \Big\lbrace(A^0)^{-1}A^a \partial_a \partial_{\vec{\alpha}} \mathbf{W}
 			- \partial_{\vec{\alpha}} \big[(A^0)^{-1} A^a \partial_a \mathbf{W}\big] \Big\rbrace. \notag
\end{align}

\end{subequations}
Furthermore, $\dot{u}^0 \eqdef \frac{1}{u^0} u_a \dot{u}^a$ is a solution to equation 
\eqref{E:DOTU0EQUATION} with an inhomogeneous term $\mathfrak{G}_{\vec{\alpha}}^0$ defined by

\begin{align}
 	\mathfrak{G}_{\vec{\alpha}}^0 & = g_{ab}\bigg[u^{\nu} \partial_{\nu} \Big(\frac{u^a}{u^0}\Big)\bigg] 
 		\partial_{\vec{\alpha}} u^b 
		+  \omega\Big(\frac{3 \speed^2 - 2 + 2 u^0}{u^0}\Big) g_{ab} u^a \partial_{\vec{\alpha}} u^b
		+ \Big(\frac{1}{u^0}\Big) g_{ab} u^a \mathfrak{G}_{\vec{\alpha}}^b. \label{E:mathfrakG0alphainhomogeneousterm}
\end{align}

\end{lemma}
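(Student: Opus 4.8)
The plan is to derive the equations of variation directly from the matrix--vector form \eqref{E:MATRIXVECTOREULER} of the relativistic Euler equations by solving for the time derivative and then commuting with the constant-coefficient spatial operator $\partial_{\vec{\alpha}}$. Since $A^0$ is invertible (its determinant is computed just above the start of this section, $\mathrm{det}(A^0) = (u^0)^2\{(u^0)^2 - \speed^2\Pi^{00}\}$, which is positive for our solutions), the equation $A^{\beta}\partial_{\beta}\mathbf{W} = \mathbf{b}$ is equivalent to $\partial_t\mathbf{W} = (A^0)^{-1}\mathbf{b} - (A^0)^{-1}A^a\partial_a\mathbf{W}$. First I would apply $\partial_{\vec{\alpha}}$ to this identity and multiply through by $A^0$, obtaining $A^0\partial_t\partial_{\vec{\alpha}}\mathbf{W} = A^0\partial_{\vec{\alpha}}[(A^0)^{-1}\mathbf{b}] - A^0\partial_{\vec{\alpha}}[(A^0)^{-1}A^a\partial_a\mathbf{W}]$. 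Adding $A^a\partial_a\partial_{\vec{\alpha}}\mathbf{W}$ to both sides reconstructs the full operator $A^{\beta}\partial_{\beta}$ on the left and yields $A^{\beta}\partial_{\beta}\partial_{\vec{\alpha}}\mathbf{W} = A^0\partial_{\vec{\alpha}}[(A^0)^{-1}\mathbf{b}] + \{A^a\partial_a\partial_{\vec{\alpha}}\mathbf{W} - A^0\partial_{\vec{\alpha}}[(A^0)^{-1}A^a\partial_a\mathbf{W}]\}$, i.e. $\dot{\mathbf{W}} \eqdef \partial_{\vec{\alpha}}\mathbf{W}$ solves \eqref{E:EOVMATRIXVECTOR} with $\mathbf{I}$ equal to the right-hand side just displayed.

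The remaining work for the first assertion is purely bookkeeping: I would verify that this right-hand side coincides with $\mathbf{b}_{\vec{\alpha}} + \mathbf{b}_{\triangle\vec{\alpha}}$ as defined in \eqref{E:bdef}--\eqref{E:FalphamathfrakGjalphainhomogeneousterms}. Inserting the definition of $\mathbf{b}_{\triangle\vec{\alpha}}$, the terms $\pm\partial_{\vec{\alpha}}\mathbf{b}$ and $\pm\mathbf{b}_{\vec{\alpha}}$ telescope, and the outer factor $A^0$ in front of the last brace of \eqref{E:FalphamathfrakGjalphainhomogeneousterms} cancels the $(A^0)^{-1}$ inside it; what survives is precisely $A^0\partial_{\vec{\alpha}}[(A^0)^{-1}\mathbf{b}] + \{A^a\partial_a\partial_{\vec{\alpha}}\mathbf{W} - A^0\partial_{\vec{\alpha}}[(A^0)^{-1}A^a\partial_a\mathbf{W}]\}$. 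Along the way it is worth recording the (automatic) Leibniz-rule observation that the top-order contributions cancel inside each brace --- in $A^0\partial_{\vec{\alpha}}[(A^0)^{-1}\mathbf{b}] - \partial_{\vec{\alpha}}\mathbf{b}$ and in $A^0\{(A^0)^{-1}A^a\partial_a\partial_{\vec{\alpha}}\mathbf{W} - \partial_{\vec{\alpha}}[(A^0)^{-1}A^a\partial_a\mathbf{W}]\}$ the highest-order derivative of $\mathbf{W}$ drops out --- so that $\mathbf{b}_{\triangle\vec{\alpha}}$ involves $\mathbf{W}$ only through $\partial_{\vec{\beta}}\mathbf{W}$ with $|\vec{\beta}| \leq |\vec{\alpha}|$, as required of a genuine lower-order error term.

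For the claim about $\dot{u}^0 \eqdef \frac{1}{u^0}u_a\dot{u}^a = \frac{1}{u^0}u_a\partial_{\vec{\alpha}}u^a$, I would simply apply Lemma \ref{L:dot0equation} to the particular solution of the equations of variation constructed above, i.e. with $\dot{u}^j = \partial_{\vec{\alpha}}u^j$ and $\mathfrak{G}^b = \mathfrak{G}^b_{\vec{\alpha}}$. This immediately gives \eqref{E:DOTU0EQUATION} with $\mathfrak{G}^0$ as in \eqref{E:dotu0G0def}, and it only remains to rewrite that expression in the form \eqref{E:mathfrakG0alphainhomogeneousterm}: combining the two decay-type terms via $3\speed^2\omega\dot{u}^0 + 2\omega(u^0 - 1)\dot{u}^0 = \omega\,\frac{3\speed^2 - 2 + 2u^0}{u^0}\,(u^0\dot{u}^0)$ and using $u^0\dot{u}^0 = u_a\dot{u}^a = g_{ab}u^a\partial_{\vec{\alpha}}u^b$ matches \eqref{E:dotu0G0def} with \eqref{E:mathfrakG0alphainhomogeneousterm} term by term.

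I do not expect a real obstacle here: the lemma is essentially a definition-unwinding exercise, and the decomposition $\mathbf{I} = \mathbf{b}_{\vec{\alpha}} + \mathbf{b}_{\triangle\vec{\alpha}}$ has been arranged so that the identity holds tautologically once the Euler equations are solved for $\partial_t\mathbf{W}$. The only point deserving a moment's care is the top-order cancellation noted above, since that is what makes the decomposition \emph{useful} (not merely true) for the later energy estimates; but checking it is routine Leibniz-rule algebra.
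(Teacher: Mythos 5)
Your proposal is correct and is essentially the paper's own argument, merely narrated with the commutator notation unwound by hand: the paper writes $\mathbf{I} = A^0 \partial_{\vec{\alpha}} \big[ (A^0)^{-1} \mathbf{b} \big] + A^0 [(A^0)^{-1}A^a \partial_a, \partial_{\vec{\alpha}}] \mathbf{W}$, which coincides term-for-term with your "solve for $\partial_t \mathbf{W}$, differentiate, multiply by $A^0$, add back $A^a \partial_a \partial_{\vec{\alpha}} \mathbf{W}$" derivation, and then invokes Lemma \ref{L:dot0equation} for $\mathfrak{G}_{\vec{\alpha}}^0$ exactly as you do. Your extra remark about the top-order cancellation inside each brace is a useful sanity check (and is why the decomposition is coercively useful) even though the paper does not spell it out at this stage.
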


\begin{proof}
	Equations \eqref{E:Ialphadecomp} - \eqref{E:FalphamathfrakGjalphainhomogeneousterms} 
	are a straightforward decomposition of the inhomogeneous term $\mathbf{I} = A^0 \partial_{\vec{\alpha}} \big\lbrace 
	(A^0)^{-1} A^{\mu} \partial_{\mu} \mathbf{W} \big\rbrace + A^0 [(A^0)^{-1}A^{\mu} \partial_{\mu}, \partial_{\vec{\alpha}}] 
	\mathbf{W}$ \\
	$= A^0 \partial_{\vec{\alpha}} \big\lbrace (A^0)^{-1} \mathbf{b}  \big\rbrace
	+ A^0 [(A^0)^{-1}A^a \partial_a, \partial_{\vec{\alpha}}] \mathbf{W},$ where $[\cdot,\cdot]$
	denotes the commutator. The relation \eqref{E:mathfrakG0alphainhomogeneousterm} follows directly from \eqref{E:dotu0G0def}.
	
\end{proof}

\subsubsection{The fluid energy currents}

Our energy, which will control the up-to-top-order spatial derivatives of the solution, will be defined with the help of \emph{energy current} vectorfields $\dot{J}^{\mu}.$ 

\begin{definition} [\textbf{Currents}] \label{D:ENERGYCURRENT}
In the cases $0 < \speed^2 \leq 1/3,$ to each variation $\dot{\mathbf{W}}=(\dot{\Rlog},\dot{u}^1,\dot{u}^2,\dot{u}^3)^T,$ we associate the following energy current, where $\dot{u}^0 \eqdef \frac{1}{u^0}u_a \dot{u}^a:$ 

\begin{align} \label{E:ENERGYCURRENT}
	\dot{J}^{\mu} & \eqdef \frac{\speed^2 u^{\mu}}{(1 + \speed^2)}\dot{\Rlog}^2 + 2 \speed^2 \dot{u}^{\mu} \dot{\Rlog} 
		+ (1 + \speed^2) u^{\mu} g_{\alpha \beta} \dot{u}^{\alpha} \dot{u}^{\beta}.
\end{align}

In the case $\speed^2 = 0,$ we associate both a velocity energy current and a density energy current:

\begin{subequations}
\begin{align} \label{E:ENERGYCURRENTDUSTU}
	\dot{J}_{velocity}^{\mu}[(\dot{u}^1,\dot{u}^2,\dot{u}^3),(\dot{u}^1,\dot{u}^2,\dot{u}^3)] 
		& \eqdef e^{2 \Omega} u^{\mu} g_{\alpha \beta} \dot{u}^{\alpha} \dot{u}^{\beta}, \\
	\dot{J}_{density}^{\mu}[\dot{\Rlog},\dot{\Rlog}] & \eqdef u^{\mu} \dot{\Rlog}^2. \label{E:ENERGYCURRENTDUSTRHO}
\end{align}
\end{subequations}

\end{definition}
Similar currents have been used in \cite{dC2007}, \cite{jS2008b}, \cite{jS2008a}, and \cite{jS2011}. See the discussion in Section \ref{SSS:CURRENTS}.

\begin{remark}
	We use the notation $\dot{J}^{\mu}[\cdot, \cdot]$ when we want to emphasize that
	$\dot{J}^{\mu}$ depends quadratically on the variations.
\end{remark}

\subsubsection{The coordinate divergence of the currents}

In this section, we provide expressions for the coordinate divergence of the currents. These expressions will play a key role in our analysis of the evolution of the energies $\fluidenergy{N},$ $\mathcal{E}_{N;velocity},$ and $\mathcal{E}_{N-1;density}.$

\begin{lemma} [\textbf{Divergence of the Currents}]  \label{L:DIVDOTJ}
Let $\dot{\mathbf{W}}=(\dot{\Rlog},\dot{u}^1,\dot{u}^2,\dot{u}^3)^T$ be a solution to the equations of variation 
\eqref{E:EOV1} - \eqref{E:EOV2}. Then in the cases $0 < \speed^2 \leq 1/3,$ the coordinate divergence of $\dot{J}^{\mu}$ can be expressed as follows:

\begin{align} \label{E:DIVDOTJ}
	\partial_{\mu} \big(\dot{J}^{\mu}[\dot{\mathbf{W}}, \dot{\mathbf{W}}] \big) 
		& = \frac{\speed^2 (\partial_{\mu} u^{\mu})}{(1 + \speed^2)}\dot{\Rlog}^2 
			+ (1 + \speed^2)(\partial_{\mu} u^{\mu}) 
	\Big(-(\dot{u}^{0})^2 + g_{ab} \dot{u}^{a} \dot{u}^{b} \Big) \\
	& \ \ + 2 \speed^2 g_{ab} \bigg(\partial_t \Big[\frac{u^a}{u^0} \Big] \bigg)\dot{u}^b \dot{\Rlog} 
		+ 4 \speed^2 \omega  \frac{g_{ab} u^a \dot{u}^b}{u^0} \dot{\Rlog} \notag \\
	& \ \ + \underbrace{2(1+\speed^2)(3 \speed^2 - 1) \omega g_{ab}\dot{u}^a 	
		\dot{u}^b}_{\mbox{$\leq 0$ if $\speed^2 \leq 1/3$}} \notag \\
	& \ \ + \frac{2 \speed^2 \mathfrak{F}}{(1 + \speed^2)} \dot{\Rlog}
		- 2(1+\speed^2) \mathfrak{G}^0 \dot{u}^0
		+ 2(1+\speed^2) g_{ab} \mathfrak{G}^a \dot{u}^b. \notag 
\end{align}

In the case $\speed^2 = 0,$ the coordinate divergences of $\dot{J}_{velocity}^{\mu}$ and $\dot{J}_{density}^{\mu}$
can be respectively expressed as 

\begin{subequations}
\begin{align}
	\partial_{\mu} \big(\dot{J}_{velocity}^{\mu}[(\dot{u}^1,\dot{u}^2,\dot{u}^3),(\dot{u}^1,\dot{u}^2,\dot{u}^3)]\big)
	& = e^{2 \Omega} (\partial_{\mu} u^{\mu}) \Big(-(\dot{u}^{0})^2 + g_{ab} \dot{u}^{a} \dot{u}^{b} \Big)
	\label{E:DIVDOTJ0SPEEDVELOCITY} \\
	& \ \ + 2 \omega e^{2 \Omega} (u^0 - 1) g_{ab} \dot{u}^a \dot{u}^b
		- 2 \omega e^{2 \Omega} u^0 (\dot{u}^{0})^2 \notag \\
	& \ \ - 2 e^{2 \Omega} \mathfrak{G}^0 \dot{u}^0 + 2 e^{2 \Omega} g_{ab} \mathfrak{G}^a \dot{u}^b, \notag \\
	\partial_{\mu} \big(\dot{J}_{density}^{\mu}[\dot{\Rlog}, \dot{\Rlog}] \big) & = (\partial_{\mu} u^{\mu}) \dot{\Rlog}^2 
		+ \frac{4}{u^0} \omega g_{ab}u^a \dot{u}^b \dot{L}  \label{E:DIVDOTJ0SPEEDDENSITY} \\
	& \ \ - \frac{2}{(u^0)^2} g_{ab} u^a \mathfrak{G}^b \dot{L}
		+ \frac{2}{(u^0)^2} g_{ab} u^a (u^k \partial_k \dot{u}^b) \dot{L} \notag \\
	& \ \ - 2 (\partial_a \dot{u}^a) \dot{L} 
		+ 2 \mathfrak{F} \dot{L}. \notag
\end{align}	
\end{subequations}

\end{lemma}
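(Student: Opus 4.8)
The plan is to prove Lemma \ref{L:DIVDOTJ} by direct computation, differentiating the explicit formula \eqref{E:ENERGYCURRENT} for $\dot{J}^{\mu}$ and systematically substituting the equations of variation \eqref{E:EOV1}--\eqref{E:EOV2} and the companion equation \eqref{E:DOTU0EQUATION} for $\dot{u}^0$ to eliminate all derivatives of the variations. First I would expand $\partial_{\mu}\dot{J}^{\mu}$ using the product rule on each of the three terms in \eqref{E:ENERGYCURRENT}: the $(\partial_{\mu}u^{\mu})$ pieces fall out immediately, and what remains are the ``transport'' pieces $\frac{2\speed^2}{1+\speed^2}u^{\mu}\dot{\Rlog}\partial_{\mu}\dot{\Rlog}$, $2\speed^2(\partial_{\mu}\dot{u}^{\mu})\dot{\Rlog} + 2\speed^2\dot{u}^{\mu}\partial_{\mu}\dot{\Rlog}$, $2(1+\speed^2)u^{\mu}g_{\alpha\beta}\dot{u}^{\alpha}\partial_{\mu}\dot{u}^{\beta}$, plus the term coming from $u^{\mu}(\partial_{\mu}g_{\alpha\beta})\dot{u}^{\alpha}\dot{u}^{\beta}$, which by Lemma \ref{L:BACKGROUNDCHRISTOFFEL} (only $\partial_t g_{ab} = 2\omega g_{ab}$ is nonzero) contributes $2(1+\speed^2)\omega u^0 g_{ab}\dot{u}^a\dot{u}^b$.

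Next I would contract the equations of variation against appropriate factors to produce exactly the combinations of derivatives that appear. The key algebraic inputs are: (i) $u^{\alpha}\partial_{\alpha}\dot{\Rlog} = \mathfrak{F} - (1+\speed^2)u^0{}^{-1}u_a\partial_t\dot{u}^a - (1+\speed^2)\partial_a\dot{u}^a$ from \eqref{E:EOV1}; (ii) $u^{\alpha}\partial_{\alpha}\dot{u}^j$ from \eqref{E:EOV2}; (iii) $u^{\alpha}\partial_{\alpha}\dot{u}^0$ from \eqref{E:DOTU0EQUATION}. The crucial structural fact — the same one that motivated the definition \eqref{E:DOT0INTERMSOFDOTJ} of $\dot{u}^0$ — is the constraint $g_{\alpha\beta}u^{\alpha}\dot{u}^{\beta} = 0$, equivalently $\dot{u}^0 = u^0{}^{-1}g_{ab}u^a\dot{u}^b$, so that $g_{\alpha\beta}u^{\alpha}\partial_{\mu}\dot{u}^{\beta}$-type cross terms combine cleanly. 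Using $\Pi^{\mu\nu} = u^{\mu}u^{\nu} + (g^{-1})^{\mu\nu}$ and this orthogonality, the potentially dangerous terms $g_{\alpha\beta}\dot{u}^{\alpha}\Pi^{\beta\gamma}\partial_{\gamma}\dot{\Rlog}$ collapse to $-\dot{u}^0\partial_{?}\dot{\Rlog}$-free expressions, and the contributions $\frac{\speed^2}{1+\speed^2}\Pi^{j\alpha}\partial_{\alpha}\dot{\Rlog}$ appearing through \eqref{E:EOV2} get matched against the $2\speed^2\partial_{\mu}\dot{u}^{\mu}\dot{\Rlog}$ term from differentiating the current, so that the second-derivative-looking pieces cancel and only the inhomogeneous terms $\mathfrak{F}$, $\mathfrak{G}^j$, $\mathfrak{G}^0$ and curvature/$\omega$ factors survive. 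The $\omega$-dependent bookkeeping — collecting the $4\speed^2\omega u^0{}^{-1}g_{ab}u^a\dot{u}^b\dot{\Rlog}$ term, the $2\speed^2 g_{ab}(\partial_t[u^a/u^0])\dot{u}^b\dot{\Rlog}$ term (which records that the coefficient $u^a/u^0$ in front of $\partial_t\dot{u}^a$ in \eqref{E:EOV1} is spatially varying), and especially verifying that the net coefficient of $\omega g_{ab}\dot{u}^a\dot{u}^b$ is exactly $2(1+\speed^2)(3\speed^2-1)$ (manifestly $\le 0$ for $\speed^2 \le 1/3$, which is the whole point) — is the delicate part.

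For the $\speed^2 = 0$ case I would run the analogous but simpler computation: $\dot{J}_{density}^{\mu} = u^{\mu}\dot{\Rlog}^2$ has divergence $(\partial_{\mu}u^{\mu})\dot{\Rlog}^2 + 2u^{\mu}\dot{\Rlog}\partial_{\mu}\dot{\Rlog}$, and substituting the $\speed^2 = 0$ form of \eqref{E:EOV1} — namely $u^{\alpha}\partial_{\alpha}\dot{\Rlog} = \mathfrak{F} - u^0{}^{-1}u_a\partial_t\dot{u}^a - \partial_a\dot{u}^a$ — together with the identity $\partial_t\dot{u}^a$ rewritten via $u^{\alpha}\partial_{\alpha}\dot{u}^a$ (since $u^0\partial_t = u^{\alpha}\partial_{\alpha} - u^k\partial_k$) produces \eqref{E:DIVDOTJ0SPEEDDENSITY}; similarly $\dot{J}_{velocity}^{\mu} = e^{2\Omega}u^{\mu}g_{\alpha\beta}\dot{u}^{\alpha}\dot{u}^{\beta}$ gives \eqref{E:DIVDOTJ0SPEEDVELOCITY} after accounting for $\partial_t(e^{2\Omega}) = 2\omega e^{2\Omega}$, $\partial_t g_{ab} = 2\omega g_{ab}$, and the $\dot{u}^0$-equation \eqref{E:DOTU0EQUATION} with $\speed^2 = 0$. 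The main obstacle throughout is purely organizational: tracking the many $\omega$-weighted and $1/u^0$-weighted terms and confirming the sign of the distinguished $\omega g_{ab}\dot{u}^a\dot{u}^b$ coefficient, rather than any conceptual difficulty — the proof is, as the author would say, a tedious but routine identity verification, so in the write-up I would display the expansion of $\partial_{\mu}\dot{J}^{\mu}$, cite Lemmas \ref{L:BACKGROUNDCHRISTOFFEL} and \ref{L:dot0equation} and equations \eqref{E:EOV1}--\eqref{E:EOV2}, and indicate that the remaining steps are direct substitution.
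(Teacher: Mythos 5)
Your proposal matches the paper's proof exactly: the paper proves Lemma \ref{L:DIVDOTJ} precisely by taking the coordinate divergence of the current \eqref{E:ENERGYCURRENT} (or of \eqref{E:ENERGYCURRENTDUSTU}--\eqref{E:ENERGYCURRENTDUSTRHO}) and then using \eqref{E:EOV1}--\eqref{E:EOV2} together with \eqref{E:DOTU0EQUATION} to trade derivatives of the variations for inhomogeneous terms, leaving the bookkeeping to the reader. You correctly identify the key mechanisms — the orthogonality $g_{\alpha\beta}u^{\alpha}\dot{u}^{\beta}=0$ built into \eqref{E:DOT0INTERMSOFDOTJ}, the cancellation of the $\Pi^{\mu\alpha}\partial_{\alpha}\dot{\Rlog}$ contributions against the $2\speed^2\dot{u}^{\mu}\partial_{\mu}\dot{\Rlog}$ pieces, and the origin of the distinguished $\omega g_{ab}\dot{u}^a\dot{u}^b$ coefficient from the $u^{\mu}\partial_{\mu}g_{ab}$ term combining with the $\omega(3\speed^2-2)u^0\dot{u}^j$ decay term of \eqref{E:EOV2} — so this is a correct and somewhat more explicit rendering of the paper's argument.
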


\begin{remark}
We stress that the right-hand sides of \eqref{E:DIVDOTJ} and \eqref{E:DIVDOTJ0SPEEDVELOCITY} - \eqref{E:DIVDOTJ0SPEEDDENSITY}
do not depend on the derivatives of the variations. This property is essential for our analysis of the energies. We have organized the terms on the right-hand sides of \eqref{E:DIVDOTJ} and \eqref{E:DIVDOTJ0SPEEDVELOCITY} - \eqref{E:DIVDOTJ0SPEEDDENSITY} in order to help us deduce the estimates \eqref{E:DIVJDOTL1DUSTVELOCITY} - \eqref{E:DIVJDOTL1DUSTDENSITY} and \eqref{E:DIVJDOTL1}.
\end{remark}

\begin{proof}
	To deduce \eqref{E:DIVDOTJ}, one can take the divergence of \eqref{E:ENERGYCURRENT} and then
	use the equations \eqref{E:EOV1} - \eqref{E:EOV2} and \eqref{E:DOTU0EQUATION} to replace 
	the derivatives of $\dot{\mathbf{W}}$ with inhomogeneous terms. We leave the tedious details up to the reader. The proofs of 
	\eqref{E:DIVDOTJ0SPEEDVELOCITY} - \eqref{E:DIVDOTJ0SPEEDDENSITY} are similar but simpler.
\end{proof}

\subsubsection{The definition of the fluid energies}

We are now ready to define our fluid energies, which will be used to control all spatial derivatives of
the solution, including those of top order.

\begin{definition}  [\textbf{Energies}] \label{D:ENERGY}
	Let $\mathbf{W} \eqdef (\Rlog,u^1,u^2,u^3)^T$ be the array of fluid variables, and let
	$\dot{J}^{\mu}[\cdot,\cdot],$ $\dot{J}_{velocity}^{\mu}[\cdot,\cdot],$ $\dot{J}_{density}^{\mu}[\cdot,\cdot]$
	be the currents defined in \eqref{E:ENERGYCURRENT}, \eqref{E:ENERGYCURRENTDUSTU} - \eqref{E:ENERGYCURRENTDUSTRHO}.
	In the cases $0 < \speed^2 \leq 1/3,$ we define the fluid energy $\fluidenergy{N}(t) \geq 0$ (see inequality 
	\eqref{E:JDOT0INEQUALITY}) by
	
	\begin{align} \label{E:FLUIDENERGYDEF}
		\fluidenergy{N}^2(t) & \eqdef 
	  	\sum_{|\vec{\alpha}| \leq N} \int_{\mathbb{R}^3} \dot{J}^0[\partial_{\vec{\alpha}} 
				\mathbf{W},\partial_{\vec{\alpha}} \mathbf{W}] \, d^3 x.
		\end{align}

In the case $\speed^2 = 0,$ we define the energies $\mathcal{E}_{N;velocity}(t) \geq 0$
and $\mathcal{E}_{N-1;density}(t) \geq 0$ by

\begin{subequations}
\begin{align} \label{E:FLUIDENERGYDEFDUSTU}
	\mathcal{E}_{N;velocity}^2(t) & \eqdef \sum_{|\vec{\alpha}| \leq N} \int_{\mathbb{R}^3} 	
		\dot{J}_{velocity}^0[\partial_{\vec{\alpha}} 
		(u^1,u^2,u^3),\partial_{\vec{\alpha}} (u^1,u^2,u^3)] \, d^3 x, \\		
	\mathcal{E}_{N-1;density}^2(t) & \eqdef \sum_{|\vec{\alpha}| \leq N - 1} \int_{\mathbb{R}^3} 	
		\dot{J}_{density}^0[\partial_{\vec{\alpha}} \Rlog ,\partial_{\vec{\alpha}}\Rlog] \, d^3 x.
		\label{E:FLUIDENERGYDEFDUSTDENSITY}
\end{align}
\end{subequations}

\end{definition}

In the next corollary, we provide a preliminary estimate for the time derivatives of some of the fluid-controlling
quantities. These estimates are the starting point for our proof of Proposition \ref{P:INTEGRALENERGYINEQUALITIES}, which
provides the integral inequalities that form the crux of our future stability theorem.

\begin{corollary} [\textbf{Preliminary Norm and Energy Inequalities}] \label{C:fluidenergytimederivative}
	Let $\mathbf{W} \eqdef (\Rlog, u^1,u^2,u^3)^T$ be a classical solution to the relativistic Euler equations 
	\eqref{E:MATRIXVECTOREULER}. Let $\mathcal{U}_{N-1}(t),$ $\fluidenergy{N}(t),$ $\mathcal{E}_{N;velocity},$
	and $\mathcal{E}_{N - 1;density}$ be the norm and energies defined in Definitions \ref{D:NORMS} and \ref{D:ENERGY}. 
	Then in the cases $0 < \speed^2 \leq 1/3,$ the following identity holds:
	
	\begin{align}
	\frac{d}{dt}\big(\fluidenergy{N}^2 \big) & =  
	    \sum_{|\vec{\alpha}| \leq N} \int_{\mathbb{R}^3} \partial_{\mu} \big(\dot{J}^{\mu}[\partial_{\vec{\alpha}} 
			\mathbf{W},\partial_{\vec{\alpha}} \mathbf{W}] \big) \, d^3 x. \label{E:ENTIMEDERIVATIVE}
	\end{align}
	
	In the cases $0 < \speed^2 < 1/3,$ the following differential inequality holds:
	
	\begin{align} \label{E:UNORMTIMEDERIVATIVE}
		\frac{d}{dt}\big(\mathcal{U}_{N-1}^2 \big) & \leq 
			2\Big\lbrace 3\speed^2 - 1 + \frac{\DecayFunction'(\Omega)}{\DecayFunction(\Omega)} \Big\rbrace\omega \mathcal{U}_{N-1}^2
			+ 2 \mathcal{U}_{N-1} \sum_{a=1}^3 e^{\Omega} \DecayFunction(\Omega) \| \triangle'^a \|_{H^{N-1}}.
	\end{align}
	
	In the case $\speed^2 = 0,$ the following identities hold:
	
	\begin{subequations}
	\begin{align} 
		\frac{d}{dt} \big(\mathcal{E}_{N;velocity}^2\big) 
		& = \sum_{|\vec{\alpha}| \leq N} \int_{\mathbb{R}^3} \partial_{\mu} \big(\dot{J}^{\mu}[\partial_{\vec{\alpha}} 
			(u^1,u^2,u^3),\partial_{\vec{\alpha}} (u^1,u^2,u^3)] \big) \, d^3 x, 
			\label{E:FLUIDENERGYTIMEDERIVATIVE0SPEEDVELOCITY} \\
		\frac{d}{dt} \big(\mathcal{E}_{N - 1;density}^2 \big) 
		& = \sum_{|\vec{\alpha}| \leq N - 1} \int_{\mathbb{R}^3} 	
			\partial_{\mu} \big( \dot{J}_{density}^{\mu}[\partial_{\vec{\alpha}} \Rlog ,\partial_{\vec{\alpha}} \Rlog ] \big) 
			\, d^3 x. \label{E:FLUIDENERGYTIMEDERIVATIVE0SPEEDDENSITY}
	\end{align}
	\end{subequations}
	
\end{corollary}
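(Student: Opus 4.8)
The plan is to handle the three families of statements separately; each is obtained by differentiating the relevant squared quantity in $t$ and substituting the evolution equations established above.

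For the energy identities \eqref{E:ENTIMEDERIVATIVE}, \eqref{E:FLUIDENERGYTIMEDERIVATIVE0SPEEDVELOCITY}, \eqref{E:FLUIDENERGYTIMEDERIVATIVE0SPEEDDENSITY}, I would begin from the definitions \eqref{E:FLUIDENERGYDEF}, \eqref{E:FLUIDENERGYDEFDUSTU}, \eqref{E:FLUIDENERGYDEFDUSTDENSITY} and differentiate term by term over the finite set of multi-indices $\vec\alpha$. Fixing $\vec\alpha$ and writing $\partial_t \dot J^0 = \partial_\mu \dot J^\mu - \partial_a \dot J^a$, the claimed identity reduces to $\int_{\mathbb R^3} \partial_a \dot J^a[\partial_{\vec\alpha}\mathbf W,\partial_{\vec\alpha}\mathbf W]\, d^3x = 0$, i.e.\ the vanishing of the spatial-divergence contribution. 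Since each $\partial_{\vec\alpha}\mathbf W$ with $|\vec\alpha|\le N$ lies in $L^2$ (Theorem \ref{T:LOCAL}), each $\dot J^a$ is quadratic in $L^2$ functions and hence lies in $L^1(\mathbb R^3)$, so the boundary flux through $\partial B_R$ tends to $0$ along a suitable sequence $R\to\infty$ and the divergence theorem supplies the desired vanishing. The delicate point is that at top order ($|\vec\alpha|=N$, and $|\vec\alpha|=N-1$ for the density) the pointwise time derivative $\partial_t\dot J^0$ is not separately integrable --- it involves $\partial_a\partial_{\vec\alpha}\mathbf W$, one spatial derivative beyond the available regularity --- so the naive ``differentiate under the integral'' step must be justified. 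This is precisely where the divergence structure is indispensable: by Lemma \ref{L:DIVDOTJ} the combination $\partial_\mu\dot J^\mu$ (likewise $\partial_\mu\dot J_{velocity}^\mu$, $\partial_\mu\dot J_{density}^\mu$) contains no derivatives of the variations at all, hence is manifestly in $L^1$, and one makes the identity rigorous either by first proving it for smooth, rapidly decaying data --- for which Theorem \ref{T:LOCAL} supplies local existence and continuous dependence --- and passing to the limit, or by a Friedrichs mollification of the variations whose commutator errors vanish as the mollification parameter is removed.

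For the inequality \eqref{E:UNORMTIMEDERIVATIVE} I would argue directly, with no energy-current machinery, since only the below-top-order derivatives ($|\vec\alpha|\le N-1$) enter and the corresponding time derivatives do lie in $L^2$. Writing $\mathcal U_{N-1}^2 = e^{2\Omega}\DecayFunction^2(\Omega)\sum_{j=1}^3\sum_{|\vec\alpha|\le N-1}\int_{\mathbb R^3}(\partial_{\vec\alpha}u^j)^2\, d^3x$ and using $\frac{d}{dt}\Omega=\omega$, the product rule produces the term $2\omega\big(1 + \DecayFunction'(\Omega)/\DecayFunction(\Omega)\big)\mathcal U_{N-1}^2$ from differentiating the prefactor $e^{2\Omega}\DecayFunction^2(\Omega)$, plus $2e^{2\Omega}\DecayFunction^2(\Omega)\sum_{j=1}^3\sum_{|\vec\alpha|\le N-1}\int_{\mathbb R^3}(\partial_{\vec\alpha}u^j)(\partial_t\partial_{\vec\alpha}u^j)\, d^3x$. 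Commuting $\partial_{\vec\alpha}$ through the isolated evolution equation \eqref{E:PARTIALTUJ} --- legitimate because $\omega=\omega(t)$ is spatially constant --- gives $\partial_t\partial_{\vec\alpha}u^j = \omega(3\speed^2-2)\partial_{\vec\alpha}u^j + \partial_{\vec\alpha}\triangle'^j$; the first piece contributes $2\omega(3\speed^2-2)\mathcal U_{N-1}^2$, and combining with the prefactor term and using $1+3\speed^2-2 = 3\speed^2-1$ yields the coefficient $2(3\speed^2-1+\DecayFunction'(\Omega)/\DecayFunction(\Omega))\omega$ appearing in \eqref{E:UNORMTIMEDERIVATIVE}. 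For the remaining contribution I would apply Cauchy-Schwarz in $x$ and then in the sum over $\vec\alpha$ to bound $\sum_{|\vec\alpha|\le N-1}\int_{\mathbb R^3}(\partial_{\vec\alpha}u^j)(\partial_{\vec\alpha}\triangle'^j)\, d^3x \le \|u^j\|_{H^{N-1}}\|\triangle'^j\|_{H^{N-1}}$, and then rewrite $e^{2\Omega}\DecayFunction^2(\Omega)\|u^j\|_{H^{N-1}} = e^\Omega\DecayFunction(\Omega)\cdot\big(e^\Omega\DecayFunction(\Omega)\|u^j\|_{H^{N-1}}\big)\le e^\Omega\DecayFunction(\Omega)\,\mathcal U_{N-1}$, which produces exactly the error term $2\mathcal U_{N-1}\sum_{a=1}^3 e^\Omega\DecayFunction(\Omega)\|\triangle'^a\|_{H^{N-1}}$.

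The main obstacle is therefore not the algebra, which is routine bookkeeping once Corollary \ref{C:ISOLATEPARTIALTPUJ} and Lemma \ref{L:DIVDOTJ} are available, but the careful justification of differentiating the top-order pieces of $\fluidenergy{N}^2$, $\mathcal E_{N;velocity}^2$, and $\mathcal E_{N-1;density}^2$ under the integral sign; this rests essentially on the divergence structure of the energy currents together with an approximation (or mollification) argument. Everything else is a direct computation.
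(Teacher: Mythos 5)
Your proposal is correct and takes essentially the same route the paper does: for \eqref{E:UNORMTIMEDERIVATIVE} you differentiate $\mathcal{U}_{N-1}^2$ in $t$, commute $\partial_{\vec\alpha}$ through equation \eqref{E:PARTIALTUJ}, reorganize the coefficient $2\omega(1+\DecayFunction'/\DecayFunction+3\speed^2-2)$ into the displayed form, and apply Cauchy--Schwarz; for the energy identities you invoke the divergence theorem to kill the spatial flux, exactly as the paper does in the line preceding the proof's conclusion. The one place you go beyond the paper is in flagging the regularity issue at top order --- namely that $\partial_t \dot J^0[\partial_{\vec\alpha}\mathbf W,\partial_{\vec\alpha}\mathbf W]$ with $|\vec\alpha|=N$ involves $N+1$ spatial derivatives and is not separately integrable, so the passage $\frac{d}{dt}\int \dot J^0 = \int \partial_\mu \dot J^\mu$ must be justified via the cancellation in Lemma \ref{L:DIVDOTJ} together with a smooth-data approximation argument; the paper simply writes that the identities "follow easily from the definitions \ldots and the divergence theorem." Your more careful remark is a genuine refinement, but it does not change the substance of the argument, and your algebraic verification of the coefficient $2\{3\speed^2-1+\DecayFunction'(\Omega)/\DecayFunction(\Omega)\}\omega$ and the error term $2\mathcal U_{N-1}\sum_a e^{\Omega}\DecayFunction(\Omega)\|\triangle'^a\|_{H^{N-1}}$ matches the intermediate identity \eqref{E:PARTIALUNMINUSONESQUAREDEXPRESSION} and the stated conclusion.
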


\begin{proof}
	To prove \eqref{E:UNORMTIMEDERIVATIVE}, we use the definition \eqref{E:UNMINUSONEDEF} of $\mathcal{U}_{N-1}$
	and equation \eqref{E:PARTIALTUJ}  (differentiated with $\partial_{\vec{\alpha}}$) to conclude that
	
	\begin{align} \label{E:PARTIALUNMINUSONESQUAREDEXPRESSION}
		\frac{d}{dt}\big(\mathcal{U}_{N-1}^2 \big) & = 
			2\Big\lbrace 3\speed^2 - 1 + \frac{\DecayFunction'(\Omega)}{\DecayFunction(\Omega)} \Big\rbrace\omega \mathcal{U}_{N-1}^2
			+ 2 e^{2 \Omega} \DecayFunction^2(\Omega) \sum_{|\vec{\alpha}| \leq N-1} \sum_{a=1}^3 \int_{\mathbb{R}^3}
			(\partial_{\vec{\alpha}} u^a) \partial_{\vec{\alpha}} \triangle'^a \, d^3 x. 
	\end{align}
	Inequality \eqref{E:UNORMTIMEDERIVATIVE} now follows from \eqref{E:PARTIALUNMINUSONESQUAREDEXPRESSION},
	the definition of $\mathcal{U}_{N-1},$ and the Cauchy-Schwarz inequality for integrals. 
	
	\eqref{E:FLUIDENERGYTIMEDERIVATIVE0SPEEDDENSITY} and \eqref{E:FLUIDENERGYTIMEDERIVATIVE0SPEEDVELOCITY} - 
	\eqref{E:FLUIDENERGYTIMEDERIVATIVE0SPEEDDENSITY} follow easily from the definitions \eqref{E:FLUIDENERGYDEF}, 
	\eqref{E:FLUIDENERGYDEFDUSTU} - \eqref{E:FLUIDENERGYDEFDUSTDENSITY}, and the divergence theorem.

\end{proof}

\section{Sobolev Estimates} \label{S:SOBOLEV}

In this section, we derive Sobolev estimates for the inhomogeneous terms appearing in equations \eqref{E:FINALEULERP} - 
\eqref{E:FINALEULERUJ}, \eqref{E:U0EQUATION}, and \eqref{E:PARTIALTRLOG} - \eqref{E:PARTIALTUJ}, for the terms in Lemma \ref{L:EOVINHOMOGENEOUS}, and for the right-hand sides of \eqref{E:UNORMTIMEDERIVATIVE} - \eqref{E:PARTIALUNMINUSONESQUAREDEXPRESSION}. All of the bounds are derived under a smallness assumption on the norm $\fluidnorm{N}.$ These bounds form the backbone of Section \ref{S:INTEGRALINEQUALITY}, where they are used to help derive integral inequalities for $\mathcal{U}_{N-1}$ and $\fluidenergy{N},$ $\mathcal{E}_{N;velocity},$ and $\mathcal{E}_{N-1;density}.$ We collect together all of the estimates in the next proposition.

\begin{remark}
	Note that in following proposition, we establish different estimates for the special cases $\speed^2 = 0$ and $\speed^2 =1/3$ 
	compared to the other cases.
\end{remark}

\begin{proposition} [\textbf{Sobolev Estimates}] \label{P:NONLINEARITIES}
	Assume that $0 \leq \speed^2 \leq 1/3.$ Let
	$(\Rlog,u^1,u^2,u^3)$ be a classical solution to the relativistic Euler equations \eqref{E:FINALEULERP} - 
	\eqref{E:FINALEULERUJ} on the spacetime slab $[1,T) \times \mathbb{R}^3,$ and let $\fluidnorm{N}$ be the solution 
	norm defined in Definition \ref{D:NORMS}. Assume that $\fluidnorm{N}(t) \leq \epsilon$ on $[1,T).$ Then if $\epsilon$ is 
	sufficiently small, the following estimate for $u^0$ holds on $[1,T):$
	
	\begin{align} 
		\| u^0 - 1 \|_{H^N} & \lesssim \left\lbrace \begin{array}{ll} 
	   e^{- 2 \Omega} \fluidnorm{N;velocity}, & \speed^2 = 0, \\
	    \DecayFunction^{-1}(\Omega) \fluidnorm{N}, & 0 < \speed^2 < 1/3, \\
	    \fluidnorm{N}, & \speed^2 = 1/3. \\
	    \end{array} \right. \label{E:U0UPPERHN}
	\end{align}
	
	For the time derivatives of the fluid quantities, we have the following estimates on $[1,T):$
	
	\begin{subequations}
	\begin{align}
		 & \| \partial_t \Rlog \|_{H^{N-2}} \lesssim (\omega + 1)e^{- 2 \Omega} \mathcal{S}_{N;velocity}, \qquad \speed^2 = 0, 
		 \label{E:PARTIALTRLOGHNMINUSTWO} \\
	   & \| \partial_t \Rlog \|_{H^{N-1}} \lesssim \left\lbrace \begin{array}{ll}
	   	\omega \DecayFunction^{-1}(\Omega) \fluidnorm{N} 
	    	+ e^{- \Omega} \fluidnorm{N}, & 0 < \speed^2 < 1/3, \\
	    e^{- \Omega} \fluidnorm{N}, & \speed^2 = 1/3,
	    \end{array} \right. \label{E:PARTIALTRLOGHNMINUSONE} \\
		& \| \partial_t u^0 \|_{H^{N-1}} \lesssim 
			\left\lbrace \begin{array}{ll} 
	    (\omega + 1) e^{- 2 \Omega} \mathcal{S}_{N;velocity}, & \speed^2 = 0, \\
	     \omega \DecayFunction^{-1}(\Omega) \fluidnorm{N} 
	     + e^{- \Omega} \DecayFunction^{-1}(\Omega) \fluidnorm{N}, & 0 < \speed^2 < 1/3, \\
	    	e^{- \Omega} \fluidnorm{N}, & \speed^2 = 1/3,
	    \end{array} \right. \label{E:PARTIALTU0UPPERHNMINUSONE} \\
		& \| \partial_t u^j \|_{H^{N-1}} \lesssim \left\lbrace \begin{array}{ll} 
	    (\omega + 1) e^{- 2 \Omega} \mathcal{S}_{N;velocity}, & \speed^2 = 0, \\
	    \omega e^{- \Omega} \DecayFunction^{-1}(\Omega) \mathcal{U}_{N-1} + e^{-2 \Omega} \fluidnorm{N}, & 0 < \speed^2 < 1/3, \\ 
	    e^{- \Omega} \fluidnorm{N}, & \speed^2 = 1/3.
	    \end{array} \right. \label{E:PARTIALTUJUPPERHNMINUSONE} 
	\end{align}
	\end{subequations}

For the inhomogeneous terms $\omega \frac{(1 + \speed^2)\Rlog}{u^0} g_{ab}u^a u^b$ and
$(3 \speed^2 - 2) \omega u^0 u^{j}$ from Proposition \ref{P:DECOMPOSITION}, 
we have the following estimates on $[1,T)$ in the cases $0 \leq \speed^2 < 1/3$
(the case $\speed^2 = 1/3$ is not directly needed for the remaining estimates):
	
	\begin{subequations}	
	\begin{align}
		\Big \| \omega \frac{(1 + \speed^2)}{u^0} g_{ab}u^a u^b \Big\|_{H^N} & \lesssim \omega \DecayFunction^{-1}(\Omega) \fluidnorm{N}, 
			\label{E:PINHOMOGENEOUSHN} \\
		\| (3 \speed^2 - 2) \omega u^0 u^{j} \|_{H^N} & \lesssim \omega e^{- \Omega} \fluidnorm{N}, \label{E:UJINHOMOGENEOUSHN} \\
		\| \partial_{\vec{\alpha}} (\omega u^0 u^{j}) - \omega u^0 \partial_{\vec{\alpha}} u^{j} \|_{L^2}
			& \lesssim \omega e^{- \Omega} \DecayFunction^{-1}(\Omega) \fluidnorm{N}, && (|\vec{\alpha}| \leq N). 
			\label{E:PARTIALVECALPHAUJINHOMOGENEOUSNONPRINCIPALL2} 
	\end{align}
	\end{subequations}
	
For the inhomogeneous terms $\triangle',$ $\triangle'^0,$ and $\triangle'^j$ from Corollary \ref{C:ISOLATEPARTIALTPUJ},
we have the following estimates on $[1,T):$ 

\begin{subequations}
\begin{align}
	 & \| \triangle' \|_{H^{N-2}} \lesssim (\omega + 1)e^{- 2 \Omega} \mathcal{S}_{N;velocity}, \qquad \speed^2 = 0, 
	 \label{E:TRIANGLEPRIMEHNMINUSTWO} \\
	  & \| \triangle' \|_{H^{N-1}} \lesssim \left\lbrace \begin{array}{ll}
	   	\omega \DecayFunction^{-1}(\Omega) \fluidnorm{N} 
	    	+ e^{- \Omega} \fluidnorm{N}, & 0 < \speed^2 < 1/3, \\
	    e^{- \Omega} \fluidnorm{N}, & \speed^2 = 1/3,
	    \end{array} \right.  \label{E:TRIANGLEPRIMEHNMINUSONE} \\
	& \| \triangle'^0 \|_{H^{N-1}} \lesssim 
			\left\lbrace \begin{array}{ll} 
	    (\omega + 1) e^{- 2 \Omega} \mathcal{S}_{N;velocity}, & \speed^2 = 0, \\
	     \omega \DecayFunction^{-1}(\Omega) \fluidnorm{N} 
	     + e^{- \Omega} \DecayFunction^{-1}(\Omega) \fluidnorm{N}, & 0 < \speed^2 < 1/3, \\
	    	e^{- \Omega} \fluidnorm{N}, & \speed^2 = 1/3,
	    \end{array} \right. \label{E:TRIANGLEPRIME0HNMINUSONE} \\
	& \| \triangle'^j \|_{H^{N-1}} \left\lbrace \begin{array}{ll} 
	    (\omega + 1) e^{- 2 \Omega} \mathcal{S}_{N;velocity}, & \speed^2 = 0, \\
	    \omega e^{- \Omega} \DecayFunction^{-2}(\Omega) \mathcal{U}_{N-1} + e^{-2 \Omega} \fluidnorm{N}, & 0 < \speed^2 < 1/3, \\ 
	    e^{-2 \Omega} \fluidnorm{N}, & \speed^2 = 1/3.
	    \end{array} \right. \label{E:TRIANGLEPRIMEJHNMINUSONE} 
\end{align}
\end{subequations}
	
	For the $L^2$ norm of the variation $\dot{u}^0 = \frac{1}{u^0} u_a \dot{u}^a$ defined in \eqref{E:DOT0INTERMSOFDOTJ}, we have 
	the following estimate on $[1,T):$
	
	\begin{align} \label{E:DOTU0L2INTERMSOFDOTUAL2}
		\| \dot{u}^0 \|_{L^2} & \lesssim \left\lbrace \begin{array}{ll} 
	    \mathcal{S}_{N;velocity} \sum_{a=1}^3 \| \dot{u}^a \|_{L^2}, & \speed^2 = 0, \\
	    e^{\Omega} \DecayFunction^{-1}(\Omega)\fluidnorm{N} \sum_{a=1}^3 \| \dot{u}^a \|_{L^2}, & 0 < \speed^2 < 1/3, \\
	    e^{\Omega} \fluidnorm{N} \sum_{a=1}^3 \| \dot{u}^a \|_{L^2}, & \speed^2 = 1/3.
	    \end{array} \right.
	\end{align}

For the $L^2$ norms of the inhomogeneous terms $\mathfrak{F}_{\vec{\alpha}},$ 
$\mathfrak{G}_{\vec{\alpha}}^{\mu}$ defined in \eqref{E:FalphamathfrakGjalphainhomogeneousterms}, \eqref{E:mathfrakG0alphainhomogeneousterm}, we have the following estimates on $[1,T)$ in the case $\speed^2 = 0:$

\begin{subequations}
\begin{align} 
	\| \mathfrak{F}_{\vec{\alpha}} \|_{L^2} & \lesssim (\omega + 1)e^{-2 \Omega} \fluidnorm{N}, 
		&& (0 \leq |\vec{\alpha}| \leq N - 1), 		
		\label{E:MATHFRAKFALPHAMATHFRAKGALPHAL2DUST} \\
	\left\| \begin{pmatrix}
					\mathfrak{G}_{\vec{\alpha}}^1 \\
					\mathfrak{G}_{\vec{\alpha}}^2 \\
					\mathfrak{G}_{\vec{\alpha}}^3																			
				\end{pmatrix} \right\|_{L^2}
				& \lesssim \mathcal{S}_{N;velocity} \begin{pmatrix}
                       	e^{- 2\Omega}  \\
                        e^{- 2\Omega}  \\
                        e^{- 2\Omega}
     		\end{pmatrix},
				&& (0 \leq |\vec{\alpha}| \leq N), \label{E:MATHFRAKGALPHAMATHFRAKGALPHAL2DUST} \\
	\| \mathfrak{G}_{\vec{\alpha}}^0 \|_{L^2} & \lesssim (\omega + 1) e^{-2 \Omega} \mathcal{S}_{N;velocity}, 
		&& (0 \leq |\vec{\alpha}| \leq N).  \label{E:MATHFRAKG0ALPHAL2DUST}
\end{align}\end{subequations}

Furthermore, we have the following estimates on $[1,T)$ in the cases $0 < \speed^2 < 1/3$
(the case $\speed^2 = 1/3$ is not directly needed for the remaining estimates):

\begin{subequations}
\begin{align} \label{E:MATHFRAKFALPHAMATHFRAKGALPHAL2}
	\left\| \begin{pmatrix}
					\mathfrak{F}_{\vec{\alpha}} \\
					\mathfrak{G}_{\vec{\alpha}}^1 \\
					\mathfrak{G}_{\vec{\alpha}}^2 \\
					\mathfrak{G}_{\vec{\alpha}}^3																			
				\end{pmatrix} \right\|_{L^2}
				& \lesssim \omega \fluidnorm{N}
				\begin{pmatrix}
                        \DecayFunction^{-1}(\Omega) \\
                       	e^{- \Omega} \DecayFunction^{-1}(\Omega)   \\
                        e^{- \Omega} \DecayFunction^{-1}(\Omega)   \\
                        e^{- \Omega} \DecayFunction^{-1}(\Omega)
     		\end{pmatrix}
     		+ \fluidnorm{N} \begin{pmatrix}
                        e^{- \Omega} \\
                       	e^{- 2\Omega} \\
                        e^{- 2\Omega} \\
                        e^{- 2\Omega} 
     		\end{pmatrix}, && (0 \leq |\vec{\alpha}| \leq N), \\
	\| \mathfrak{G}_{\vec{\alpha}}^0 \|_{L^2} & \lesssim \omega \DecayFunction^{-1}(\Omega) \fluidnorm{N}
		+ e^{- \Omega} \fluidnorm{N}, && (0 \leq |\vec{\alpha}| \leq N).  \label{E:MATHFRAKG0ALPHAL2}
\end{align}
\end{subequations}

In the case $\speed^2 = 0,$ for the currents $\dot{J}_{velocity}^{\mu}[\cdot,\cdot]$ and $\dot{J}_{density}^{\mu}[\cdot,\cdot]$
defined in \eqref{E:ENERGYCURRENTDUSTU} - \eqref{E:ENERGYCURRENTDUSTRHO}, we have the following estimates on $[1,T):$

\begin{align}
	\sum_{|\vec{\alpha}| \leq N} 
		\int_{\mathbb{R}^3} \partial_{\mu} \big(\dot{J}_{velocity}^{\mu}[\partial_{\vec{\alpha}} (u^1,u^2,u^3), 
		\partial_{\vec{\alpha}} (u^1,u^2,u^3)]\big) \, d^3 x & \lesssim (\omega + 1) e^{-2\Omega} \mathcal{S}_{N;velocity}^2, 
		\label{E:DIVJDOTL1DUSTVELOCITY} \\
	\sum_{|\vec{\alpha}| \leq N-1} 
		\int_{\mathbb{R}^3} \partial_{\mu} \big(\dot{J}_{density}^{\mu}[\partial_{\vec{\alpha}} \Rlog, 
		\partial_{\vec{\alpha}} \Rlog]\big) \, d^3 x & \lesssim (\omega + 1) e^{-2\Omega} \fluidnorm{N}^2. 
		\label{E:DIVJDOTL1DUSTDENSITY}
\end{align}

In the cases $0 < \speed^2 \leq 1/3,$ for the currents $\dot{J}^{\mu}[\cdot,\cdot]$ defined in \eqref{E:ENERGYCURRENT}, we have the following estimates on $[1,T):$

\begin{align}
	\sum_{|\vec{\alpha}| \leq N} 
		\int_{\mathbb{R}^3} \partial_{\mu} \big(\dot{J}^{\mu}[\partial_{\vec{\alpha}} \mathbf{W}, \partial_{\vec{\alpha}} 
			\mathbf{W}]\big) \, d^3 x 
		& \lesssim \left\lbrace \begin{array}{ll} 
			\omega \DecayFunction^{-1}(\Omega) \fluidnorm{N}^2 + e^{- \Omega} \fluidnorm{N}^2, & 0 < \speed^2 < 1/3, \\
			e^{- \Omega} \fluidnorm{N}^2, & \speed^2 = 1/3.
	\end{array} \right.\label{E:DIVJDOTL1}
\end{align}

\end{proposition}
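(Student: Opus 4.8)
The proof is essentially a matter of feeding the explicit algebraic formulas already in hand — equation \eqref{E:U0UPPERISOLATED}, the equations \eqref{E:FINALEULERP}--\eqref{E:U0EQUATION}, the expressions \eqref{E:TRIANGLEPRIMEDEF}--\eqref{E:TRIANGLEPRIMEJDEF} for $\triangle', \triangle'^0, \triangle'^j,$ the decomposition \eqref{E:Ialphadecomp}--\eqref{E:mathfrakG0alphainhomogeneousterm} of the equation-of-variation inhomogeneities, and the divergence identities \eqref{E:DIVDOTJ}, \eqref{E:DIVDOTJ0SPEEDVELOCITY}--\eqref{E:DIVDOTJ0SPEEDDENSITY} — into the standard Sobolev--Moser product, composition, and commutator estimates collected in the Appendix, bounding $L^\infty$ norms via Sobolev embedding ($N \geq 3$) and using the bootstrap hypothesis $\fluidnorm{N}(t) \leq \epsilon$ throughout. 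The single non-mechanical ingredient — and the reason the estimates carry such carefully chosen powers of $e^{\Omega}$ and $\DecayFunction(\Omega)$ — is that for $0 < \speed^2 < 1/3$ the $\mathcal{U}_{N-1}$ summand of $\fluidnorm{N}$ gives, via Sobolev embedding, the \emph{improved} pointwise bound $\| u^j \|_{C_b^1} \lesssim \| u^j \|_{H^{N-1}} \lesssim e^{-\Omega}\DecayFunction^{-1}(\Omega)\fluidnorm{N}$, one full power of $\DecayFunction^{-1}(\Omega)$ faster than the crude rate $e^{-\Omega}$ carried by the top-order derivatives of $u^j$; in every expression below that is at least quadratic in the solution, one factor is placed at below-top order and estimated with this faster rate, while the remaining factors are placed at top order and estimated by $\fluidnorm{N}$. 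In the case $\speed^2 = 0$ the velocity system decouples from $\Rlog$ and the coefficient $3\speed^2 - 2 = -2$ produces the much stronger rate $e^{-2\Omega}$ directly from the definition of $\mathcal{S}_{N;velocity}$, so no auxiliary norm is needed; in the case $\speed^2 = 1/3$ the coefficients $1 - 3\speed^2$ and $3\speed^2 - 1$ multiplying the dangerous $\omega$-terms vanish (as noted in the Remark following Corollary \ref{C:ISOLATEPARTIALTPUJ}), which is what permits the weaker rate $e^{-\Omega}$ (with $\DecayFunction \equiv 1$).

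I would carry out the estimates in the order listed. First \eqref{E:U0UPPERHN}: since $u^0 - 1 = (u^0 + 1)^{-1} g_{ab} u^a u^b = (u^0 + 1)^{-1} e^{2\Omega}\delta_{ab} u^a u^b$ is a smooth function of the quantities $e^{\Omega}u^a$ vanishing to second order, the composition and product estimates give the stated bound, the decisive decay factor supplied by the improved low-order bound above (respectively by $\mathcal{S}_{N;velocity}$ when $\speed^2 = 0$). Next I would treat \eqref{E:PINHOMOGENEOUSHN}--\eqref{E:PARTIALVECALPHAUJINHOMOGENEOUSNONPRINCIPALL2} and the quantities $\triangle', \triangle'^0, \triangle'^j$: each summand in \eqref{E:TRIANGLEPRIMEDEF}--\eqref{E:TRIANGLEPRIMEJDEF} is a product of at most one factor of $\omega$, a smooth function of the small quantity $g_{ab}u^a u^b/(u^0)^2$ (handled by composition), and either one spatial derivative of $\mathbf{W}$ or one undifferentiated extra factor of $u^a$; counting the $e^{\Omega}$ and $\DecayFunction(\Omega)$ weights furnished by $\fluidnorm{N}$ and $\mathcal{U}_{N-1}$, and using the vanishing of the $(1 - 3\speed^2)$ prefactors when $\speed^2 = 1/3$, yields \eqref{E:TRIANGLEPRIMEHNMINUSTWO}--\eqref{E:TRIANGLEPRIMEJHNMINUSONE}. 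The time-derivative bounds \eqref{E:PARTIALTRLOGHNMINUSTWO}--\eqref{E:PARTIALTUJUPPERHNMINUSONE} are then immediate from \eqref{E:PARTIALTRLOG}--\eqref{E:PARTIALTUJ}, and \eqref{E:DOTU0L2INTERMSOFDOTUAL2} follows from $\dot{u}^0 = (u^0)^{-1} e^{2\Omega}\delta_{ab} u^a \dot{u}^b$ by pairing the $L^\infty$ norm of $e^{2\Omega}u^a/u^0$ against $\| \dot{u}^b \|_{L^2}$.

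For the equation-of-variation inhomogeneities \eqref{E:MATHFRAKFALPHAMATHFRAKGALPHAL2DUST}--\eqref{E:MATHFRAKG0ALPHAL2} I would use the decomposition \eqref{E:FalphamathfrakGjalphainhomogeneousterms}. Its first two brackets involve only $\mathbf{b}$, its spatial derivatives, and the explicit matrices $A^0, (A^0)^{-1}$, so they are controlled by the inhomogeneous-term estimates just obtained together with Moser's product and composition estimates. Its third bracket is the commutator $A^0[(A^0)^{-1}A^a\partial_a, \partial_{\vec{\alpha}}]\mathbf{W}$, which the standard Moser commutator estimate bounds \emph{without any derivative of $\mathbf{W}$ of order $|\vec{\alpha}| + 1$} — exactly the cancellation recorded in Lemma \ref{L:EOVINHOMOGENEOUS} — by products of below-top-order derivatives of the (explicit, solution-dependent) matrix coefficients with below-top-order derivatives of $\mathbf{W}$; since these coefficients are small smooth perturbations of constant matrices, the result is again governed by $\fluidnorm{N}$ and the improved low-order decay, and \eqref{E:mathfrakG0alphainhomogeneousterm} then gives $\mathfrak{G}_{\vec{\alpha}}^0$. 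Finally, for \eqref{E:DIVJDOTL1DUSTVELOCITY}--\eqref{E:DIVJDOTL1} I would insert the pointwise identities of Lemma \ref{L:DIVDOTJ}, discard the manifestly nonpositive term $2(1+\speed^2)(3\speed^2 - 1)\omega g_{ab}\dot{u}^a\dot{u}^b$ (favorable sign since $\speed^2 \leq 1/3$), and estimate each remaining term of $\int_{\mathbb{R}^3}\partial_\mu\dot{J}^\mu\,d^3x$ by Cauchy--Schwarz, pairing the two $\partial_{\vec{\alpha}}\mathbf{W}$ factors against each other (the metric weights absorbed into $\fluidnorm{N}^2$) or one of them against a $\mathfrak{F}_{\vec{\alpha}}, \mathfrak{G}_{\vec{\alpha}}^\mu$ factor, and using the $L^\infty$ bounds on $\partial_\mu u^\mu$ and $\partial_t(u^a/u^0)$ coming from the time-derivative estimates to supply the required factors of $\omega\DecayFunction^{-1}(\Omega)$, $e^{-\Omega}$, or (in the dust case) $(\omega + 1)e^{-2\Omega}$.

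The main obstacle is precisely this decay bookkeeping in the last step: a careless estimate of the right-hand side of Lemma \ref{L:DIVDOTJ} produces a term of the form $\omega\fluidnorm{N}^2$, which by \ref{A:A1} need not be time-integrable (indeed $\int_{1}^{\infty}\omega\,dt = \lim_{t\to\infty}\Omega(t) = \infty$), so one must verify term by term — including in the delicate combination of the $\partial_t(u^a/u^0)$ term with the explicit $\omega$-terms — that every quadratic expression in the variations admits the placement "one factor below-top order, the rest top order" that converts the crude rate into the integrable rates $\omega\DecayFunction^{-1}(\Omega)\fluidnorm{N}^2 + e^{-\Omega}\fluidnorm{N}^2$, and that the genuinely top-order-times-top-order interactions either carry the favorable sign or are absent. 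Tracking the special algebraic cancellations of the cases $\speed^2 = 0$ and $\speed^2 = 1/3$ through all of these estimates is the remaining, purely computational, source of labor.
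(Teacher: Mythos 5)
Your outline of the argument for $0 \leq \speed^2 < 1/3$ matches the paper's proof: the key point you identify — place one factor at below-top order to get the improved rate $\|u^j\|_{C_b^1} \lesssim e^{-\Omega}\DecayFunction^{-1}(\Omega)\fluidnorm{N}$ from $\mathcal{U}_{N-1}$, and the rest at top order — is exactly what the paper does, implemented through the preliminary matrix-norm estimates for $\mathbf{b},$ $A^0,$ $(A^0)^{-1},$ $\underpartial(A^0)^{-1},$ $A^a,$ $\underpartial A^a,$ $\mathbf{W}$ and the Appendix propositions.

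There is, however, a genuine gap in your treatment of the endpoint case $\speed^2 = 1/3$ for the crucial estimate \eqref{E:DIVJDOTL1}. You write that ``the coefficients $1-3\speed^2$ and $3\speed^2-1$ multiplying the dangerous $\omega$-terms vanish \ldots which is what permits the weaker rate $e^{-\Omega}$,'' and propose to run the same term-by-term Cauchy--Schwarz estimate of \eqref{E:DIVDOTJ} as in the $0 < \speed^2 < 1/3$ case. But not all $\omega$-terms in \eqref{E:DIVDOTJ} carry those prefactors. The term $4\speed^2 \omega \frac{g_{ab}u^a\dot{u}^b}{u^0}\dot{\Rlog}$ survives at $\speed^2 = 1/3,$ and after substituting $\partial_t(u^a/u^0) = \omega(3\speed^2-2)\frac{u^a}{u^0} + \cdots$ into the term $2\speed^2 g_{ab}\big(\partial_t[\frac{u^a}{u^0}]\big)\dot{u}^b\dot{\Rlog},$ the combined coefficient is $2\speed^2(3\speed^2-2) + 4\speed^2 = 6\speed^4 \neq 0$ at $\speed^2 = 1/3.$ Similarly, $\mathfrak{F}_{\vec{\alpha}}$ and $\mathfrak{G}_{\vec{\alpha}}^\mu$ inherit $\omega$-weighted contributions from $\mathbf{b}$ and from $\partial_{\vec{\alpha}}\mathbf{b} - \mathbf{b}_{\vec{\alpha}},$ whose coefficients do not vanish at $\speed^2 = 1/3.$ A direct estimate therefore yields an $\omega\fluidnorm{N}^2$ term with no $\DecayFunction^{-1}(\Omega)$ to save it, and under \ref{A:A1}--\ref{A:A2} alone $\omega$ need not be bounded or time-integrable; indeed $\int_1^\infty \omega\,dt = \infty.$ This is precisely why the paper does \emph{not} estimate \eqref{E:DIVDOTJ} directly when $\speed^2 = 1/3.$ Instead it introduces the rescaled variables $(\Rlog, U^j) = (\Rlog, e^{\Omega}u^j),$ the conformal time $\tau$ with $d\tau/dt = e^{-\Omega},$ and the rescaled current $\dot{\mathscr{J}}^\mu,$ and uses the identity $\partial_\tau\dot{\mathscr{J}}^\tau + \partial_a\dot{\mathscr{J}}^a = e^\Omega \partial_\mu\dot{J}^\mu$ \eqref{E:CURRENTRELATIONS}: in the rescaled picture the right-hand sides of \eqref{E:FINALEULERP}--\eqref{E:FINALEULERUJ} vanish identically, so the Minkowski version of the already-proven estimate gives a bound free of $\omega,$ and pulling back produces exactly $e^{-\Omega}\fluidnorm{N}^2.$ The residual $\omega$-cancellation in the Eulerian variables is real, but it is the conformal rescaling that exposes it; invoking only the vanishing of $(1-3\speed^2)$-type prefactors does not.
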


\begin{remark}
	We have not attempted to derive optimal estimates in Proposition \ref{P:NONLINEARITIES}; we only derived
	estimates that are sufficient to prove our main future stability theorem.
\end{remark}

\begin{proof} 

\noindent \emph{Proofs of \eqref{E:U0UPPERHN} - \eqref{E:TRIANGLEPRIMEJHNMINUSONE}}:
We first consider the cases $0 < \speed^2 < 1/3.$ To prove \eqref{E:U0UPPERHN} we use equation \eqref{E:U0UPPERISOLATED},
Definition \ref{D:NORMS}, Proposition \ref{P:SobolevTaylor} (with $F(v) = \sqrt{1 + v},$ $v = g_{ab} u^a u^b,$ and $\bar{v} = 0$), Proposition \ref{P:F1FKLINFINITYHN}, and Sobolev embedding to conclude that

\begin{align} \label{E:u0HNfirstinequality}
	\| u^0 - 1 \|_{H^N} & \lesssim \| g_{ab} u^a u^b \|_{H^N} 
		\lesssim e^{2 \Omega} \delta_{ab} \| u^a \|_{L^{\infty}} \| u^b \|_{H^N} 
		\lesssim \DecayFunction^{-1}(\Omega) \mathcal{U}_{N-1} \lesssim \DecayFunction^{-1}(\Omega) \fluidnorm{N}, \notag
\end{align}
where $\delta_{ab}$ is the Kronecker delta. The estimates \eqref{E:PINHOMOGENEOUSHN} - \eqref{E:PARTIALVECALPHAUJINHOMOGENEOUSNONPRINCIPALL2} and \eqref{E:TRIANGLEPRIMEHNMINUSTWO} - \eqref{E:TRIANGLEPRIMEJHNMINUSONE} follow similarly (use Proposition \ref{P:SOBOLEVMISSINGDERIVATIVEPROPOSITION} to deduce \eqref{E:PARTIALVECALPHAUJINHOMOGENEOUSNONPRINCIPALL2}). The estimates \eqref{E:PARTIALTRLOGHNMINUSTWO} - \eqref{E:PARTIALTUJUPPERHNMINUSONE} then follow trivially with the help of equations \eqref{E:PARTIALTRLOG} - \eqref{E:PARTIALTUJ}. The cases $\speed^2 = 0, 1/3$ follow similarly. 
\\

\noindent \emph{Proof of \eqref{E:DOTU0L2INTERMSOFDOTUAL2}}:  
To prove \eqref{E:DOTU0L2INTERMSOFDOTUAL2} in the cases $0 < \speed^2 < 1/3,$ we use equation \eqref{E:DOT0INTERMSOFDOTJ},
\eqref{E:U0UPPERHN}, and Sobolev embedding to deduce

\begin{align}
	\| \dot{u}^0 \|_{L^2} & \lesssim \Big\| \frac{1}{u^0} \Big\|_{L^{\infty}} \| u_a \|_{L^{\infty}} \| \dot{u}^a \|_{L^2}
		\lesssim e^{\Omega} \DecayFunction^{-1}(\Omega) \fluidnorm{N} \| \dot{u}^a \|_{L^2}.
\end{align}
The cases $\speed^2 = 0, 1/3$ follow similarly. 
\\

\noindent \emph{Proofs of \eqref{E:MATHFRAKFALPHAMATHFRAKGALPHAL2DUST} - \eqref{E:MATHFRAKG0ALPHAL2}}: We first prove
\eqref{E:MATHFRAKFALPHAMATHFRAKGALPHAL2} (in the cases $0 < \speed^2 < 1/3$). We begin by using equation \eqref{E:FalphamathfrakGjalphainhomogeneousterms} to deduce that

\begin{align} \label{E:FalphamathfrakGjalphainhomogeneoustermsagain}
	\left\| \begin{pmatrix}
					\mathfrak{F}_{\vec{\alpha}} \\
					\mathfrak{G}_{\vec{\alpha}}^1 \\
					\mathfrak{G}_{\vec{\alpha}}^2 \\
					\mathfrak{G}_{\vec{\alpha}}^3																			
				\end{pmatrix} \right\|_{L^2}
	& \leq \big \| \partial_{\vec{\alpha}} \mathbf{b} - \mathbf{b}_{\vec{\alpha}} \big \|_{L^2}
		+ \big \|  A^0 \partial_{\vec{\alpha}} \big[ (A^0)^{-1}\mathbf{b} \big] 
		- \partial_{\vec{\alpha}} \mathbf{b} \big\|_{L^2} \\
	& \ \ + \Big\| A^0 \Big\lbrace(A^0)^{-1}A^a \partial_a \partial_{\vec{\alpha}} \mathbf{W}
 		- \partial_{\vec{\alpha}} \big[(A^0)^{-1} A^a \partial_a \mathbf{W}\big] \Big\rbrace \Big\|_{L^2}. \notag
\end{align}
See the remarks at the end of Section \ref{SS:NORMS} concerning our use of notation for the norms of array-valued functions.

We now deduce the following preliminary estimates for $\mathbf{W} \eqdef (\Rlog, u^1,u^2,u^3)^{T},$ \\
$\mathbf{b}=\omega \Big(- \frac{(1 + \speed^2)}{u^0} g_{ab}u^a u^b, (3 \speed^2 - 2) u^0 u^1, 
(3 \speed^2 - 2) u^0 u^2, (3 \speed^2 - 2) u^0 u^3 \Big)^{T},$ 
$\mathbf{b}_{\vec{\alpha}} \eqdef \omega (3 \speed^2 - 2)u^0 \Big(0, \partial_{\vec{\alpha}}u^1, \partial_{\vec{\alpha}} u^2, \partial_{\vec{\alpha}} u^3 \Big)^T,$ $|\vec{\alpha}| \leq N,$ $A^{\mu},$ and $(A^0)^{-1}$
(explicit expressions for the matrices are provided in \eqref{E:A0def} - \eqref{E:A0inverse}):

\begin{align} 
		& \| \mathbf{b} \|_{H^{N-1}}
		\lesssim  \omega \fluidnorm{N} \begin{pmatrix}
                        \DecayFunction^{-1}(\Omega)  \\
                       	e^{-\Omega} \DecayFunction^{-1}(\Omega)   \\
                        e^{-\Omega} \DecayFunction^{-1}(\Omega)   \\
                        e^{-\Omega} \DecayFunction^{-1}(\Omega) 
                    \end{pmatrix}, \qquad  
      \| \partial_{\vec{\alpha}} \mathbf{b} - \mathbf{b}_{\vec{\alpha}} \|_{L^2}
							\lesssim  \omega \fluidnorm{N} \begin{pmatrix}
                        \DecayFunction^{-1}(\Omega)  \\
                       	e^{-\Omega} \DecayFunction^{-1}(\Omega) \\
                        e^{-\Omega} \DecayFunction^{-1}(\Omega) \\
                        e^{-\Omega} \DecayFunction^{-1}(\Omega) 
                    \end{pmatrix}, \label{E:BCOMMUTATORESTIMATE}
\end{align}

\begin{align}
	\| A^{0} \|_{L^{\infty}}
	& \lesssim \begin{pmatrix}
                        1  & e^{\Omega}\DecayFunction^{-1}(\Omega)\fluidnorm{N} & e^{\Omega}\DecayFunction^{-1}(\Omega)\fluidnorm{N} & e^{\Omega}\DecayFunction^{-1}(\Omega)\fluidnorm{N} \\
                       	e^{-\Omega} \DecayFunction^{-1}(\Omega)\fluidnorm{N} & 1 & 0 & 0 \\
                        e^{-\Omega} \DecayFunction^{-1}(\Omega)\fluidnorm{N} & 0 & 1 & 0 \\
                        e^{-\Omega} \DecayFunction^{-1}(\Omega)\fluidnorm{N} & 0 & 0 & 1 \\
                    \end{pmatrix},
\end{align}

\begin{align}
	\big\| (A^{0})^{-1} \big\|_{L^{\infty}}
	& \lesssim \begin{pmatrix}
                        1  & e^{\Omega}\DecayFunction^{-1}(\Omega)\fluidnorm{N} & e^{\Omega}\DecayFunction^{-1}(\Omega)\fluidnorm{N} & e^{\Omega}\DecayFunction^{-1}(\Omega)\fluidnorm{N} \\
                       	e^{-\Omega} \DecayFunction^{-1}(\Omega)\fluidnorm{N} & 1 & \DecayFunction^{-1}(\Omega)\fluidnorm{N} & \DecayFunction^{-1}(\Omega)\fluidnorm{N} \\
                        e^{-\Omega} \DecayFunction^{-1}(\Omega)\fluidnorm{N} & \DecayFunction^{-1}(\Omega)\fluidnorm{N} & 1 & \DecayFunction^{-1}(\Omega)\fluidnorm{N} \\
                        e^{-\Omega} \DecayFunction^{-1}(\Omega)\fluidnorm{N} & \DecayFunction^{-1}(\Omega)\fluidnorm{N} & \DecayFunction^{-1}(\Omega)\fluidnorm{N} & 1 \\
                    \end{pmatrix},
\end{align}

\begin{align}
	\big\| \underpartial (A^{0})^{-1} \big\|_{H^{N-1}}
	& \lesssim \fluidnorm{N} \begin{pmatrix}
                        \DecayFunction^{-1}(\Omega)  & e^{\Omega} & e^{\Omega} & e^{\Omega} \\
                       	e^{- \Omega} & \DecayFunction^{-1}(\Omega) & \DecayFunction^{-1}(\Omega) & \DecayFunction^{-1}(\Omega) \\
                        e^{- \Omega} & \DecayFunction^{-1}(\Omega) & \DecayFunction^{-1}(\Omega) & \DecayFunction^{-1}(\Omega)\\
                        e^{- \Omega} & \DecayFunction^{-1}(\Omega) & \DecayFunction^{-1}(\Omega) & \DecayFunction^{-1}(\Omega) \\
                    \end{pmatrix},
\end{align}

\begin{align} 
            \| \mathbf{W} \|_{H^N}
							\lesssim \fluidnorm{N} \begin{pmatrix}
                        1 \\
                       	e^{- \Omega}   \\
                        e^{- \Omega}   \\
                        e^{- \Omega} 
                    \end{pmatrix},
\end{align}

\begin{align}
	\| A^1 \|_{L^{\infty}} & \lesssim \begin{pmatrix}
                        e^{- \Omega} \DecayFunction^{-1}(\Omega) \fluidnorm{N} & 1  & 0 & 0 \\
                       	e^{- 2\Omega} & e^{- \Omega} \DecayFunction^{-1}(\Omega) \fluidnorm{N} & 0 & 0 \\
                        e^{- 2\Omega} & 0 & e^{- \Omega} \DecayFunction^{-1}(\Omega) \fluidnorm{N}  & 0 \\
                        e^{- 2\Omega} & 0 & 0 & e^{- \Omega} \DecayFunction^{-1}(\Omega) \fluidnorm{N}  \\
                    \end{pmatrix},
\end{align}

\begin{align}
	\| \underpartial A^1 \|_{H^{N-1}} & \lesssim \fluidnorm{N} \begin{pmatrix}
                        e^{- \Omega}  & 0  & 0 & 0 \\
                       	e^{- 2 \Omega} \DecayFunction^{-1}(\Omega) & e^{- \Omega} & 0 & 0 \\
                        e^{- 2 \Omega} \DecayFunction^{-1}(\Omega) & 0 & e^{- \Omega} & 0 \\
                        e^{- 2 \Omega} \DecayFunction^{-1}(\Omega) & 0 & 0 & e^{- \Omega}  \\
                    \end{pmatrix},
\end{align}
and analogously for the matrices $A^2,$ $A^3.$ The above preliminary estimates follow from repeated applications of 
Proposition \ref{P:DIFFERENTIATEDSOBOLEVCOMPOSITION}, Proposition \ref{P:F1FKLINFINITYHN}, the definition \eqref{E:FLUIDNORMDEF} of $\fluidnorm{N},$ and the estimates \eqref{E:U0UPPERHN} - \eqref{E:PARTIALVECALPHAUJINHOMOGENEOUSNONPRINCIPALL2}.

We now estimate the second term on the right-hand side of \eqref{E:FalphamathfrakGjalphainhomogeneoustermsagain}. To
this end, we use Propositions \ref{P:F1FKLINFINITYHN} and \ref{P:SOBOLEVMISSINGDERIVATIVEPROPOSITION}, together with the above estimates and Sobolev embedding to conclude that for $0 \leq |\vec{\alpha}| \leq N$

\begin{align} \label{E:SECONDTERMFALPHAMATHFRAKGJALPHAL2AGAIN}
	\Big\| A^0 & \partial_{\vec{\alpha}} \big[ (A^0)^{-1}\mathbf{b} \big] - \partial_{\vec{\alpha}} \mathbf{b} \Big\|_{L^2} 
		\lesssim  \| A^{0} \|_{L^{\infty}} * \big\| \underpartial (A^0)^{-1} \big\|_{H^{N-1}} 
		* \| \mathbf{b} \|_{H^{N-1}} \\
	& \lesssim \omega \fluidnorm{N} \begin{pmatrix}
                        \DecayFunction^{-1}(\Omega) \\
                       	e^{- \Omega} \DecayFunction^{-1}(\Omega)  \\
                        e^{- \Omega} \DecayFunction^{-1}(\Omega)  \\
                        e^{- \Omega} \DecayFunction^{-1}(\Omega)
                    \end{pmatrix}, \notag
\end{align}
where we write $*$ to indicate that we are performing matrix multiplication on the matrices of norms.

We similarly estimate the third term on the right-hand side of \eqref{E:FalphamathfrakGjalphainhomogeneoustermsagain} (using Proposition \ref{P:derivativesofF1FkL2} to estimate 
$ \| \underpartial [(A^0)^{-1}A^a] \|_{H^{N-1}} \lesssim
\| (A^0)^{-1} \|_{L^{\infty}} * \| \underpartial A^a \|_{H^{N-1}} + \| \underpartial (A^0)^{-1} \|_{H^{N-1}} * \| A^a \|_{L^{\infty}}$), thus arriving at the following bound:

\begin{align}  \label{E:THIRDTERMFALPHAMATHFRAKGJALPHAL2AGAIN}
		& \Big\| A^0 \Big\lbrace(A^0)^{-1}A^a \partial_a \partial_{\vec{\alpha}} \mathbf{W}
 		- \partial_{\vec{\alpha}} \big[(A^0)^{-1} A^a \partial_a \mathbf{W}\big] \Big\rbrace \Big\|_{L^2} \\
 		& \lesssim  \| A^0 \|_{L^{\infty}} * 
 			\Big\lbrace \| (A^0)^{-1} \|_{L^{\infty}} * \| \underpartial A^a \|_{H^{N-1}}
 			+ \| \underpartial (A^0)^{-1} \|_{H^{N-1}} * \| A^a \|_{L^{\infty}}
 			\Big\rbrace * \| \partial_a \mathbf{W} \|_{H^{N-1}} \notag \\
 		& \lesssim \fluidnorm{N} \begin{pmatrix}
                        e^{-\Omega} \\
                       	e^{-2\Omega}   \\
                        e^{-2\Omega}   \\
                        e^{-2\Omega} 
                    \end{pmatrix}. \notag 
\end{align}

Finally, adding the second estimate in \eqref{E:BCOMMUTATORESTIMATE} and \eqref{E:SECONDTERMFALPHAMATHFRAKGJALPHAL2AGAIN} -
\eqref{E:THIRDTERMFALPHAMATHFRAKGJALPHAL2AGAIN}, we deduce \eqref{E:MATHFRAKFALPHAMATHFRAKGALPHAL2}. The proofs of \eqref{E:MATHFRAKFALPHAMATHFRAKGALPHAL2DUST} - \eqref{E:MATHFRAKGALPHAMATHFRAKGALPHAL2DUST} (for $\speed^2 = 0$) are similar, and we omit the details. 

To prove \eqref{E:MATHFRAKG0ALPHAL2}, we use equation \eqref{E:mathfrakG0alphainhomogeneousterm},
\eqref{E:U0UPPERHN}, \eqref{E:PARTIALTU0UPPERHNMINUSONE} - \eqref{E:PARTIALTUJUPPERHNMINUSONE}, and \eqref{E:MATHFRAKFALPHAMATHFRAKGALPHAL2} to conclude that

\begin{align} \label{E:mathfrakG0alphainhomogeneoustermagain}
		\| \mathfrak{G}_{\vec{\alpha}}^0 \|_{L^2} 
		& \leq g_{ab} \Big\| u^{\nu} \partial_{\nu} \Big(\frac{u^a}{u^0}\Big) \Big\|_{L^{\infty}} \| u^b \|_{H^N}
		+ \omega \Big\| \frac{(3 \speed^2 - 2 + 2 u^0)u_a}{u^0} \Big\|_{L^{\infty}} \| u^a \|_{H^N} \\
		& \ \ + \Big\| \frac{1}{u^0} \Big \|_{L^{\infty}} \| u_a \|_{L^{\infty}} \| \mathfrak{G}_{\vec{\alpha}}^a \|_{L^2} 
		\notag \\
		& \lesssim \omega \DecayFunction^{-1}(\Omega) \fluidnorm{N} + e^{- \Omega} \fluidnorm{N}. \notag
\end{align}
This completes the proof of \eqref{E:MATHFRAKG0ALPHAL2}. The proof of \eqref{E:MATHFRAKG0ALPHAL2DUST} (for $\speed^2 = 0$) is similar, and we omit the details. 
\\

\noindent \emph{Proofs of \eqref{E:DIVJDOTL1DUSTDENSITY} - \eqref{E:DIVJDOTL1}}: To prove \eqref{E:DIVJDOTL1}
in the cases $0 < \speed^2 < 1/3,$ we first recall equation \eqref{E:DIVDOTJ}, where $\dot{\mathbf{W}} = (\dot{\Rlog}, \dot{u}^1, \dot{u}^2, \dot{u}^3)^T \eqdef \partial_{\vec{\alpha}}\mathbf{W} = (\partial_{\vec{\alpha}} \Rlog, \partial_{\vec{\alpha}} u^1,\partial_{\vec{\alpha}} u^2,\partial_{\vec{\alpha}} u^3)^T,$ and as in \eqref{E:DOT0INTERMSOFDOTJ}, $\dot{u}^0 \eqdef \frac{1}{u^0} u_a \dot{u}^a:$

\begin{align} \label{E:DIVDOTJAGAIN}
	\partial_{\mu} \big(\dot{J}^{\mu}[\dot{\mathbf{W}}, \dot{\mathbf{W}}] \big) 
		& = \frac{\speed^2 (\partial_{\mu} u^{\mu})}{(1 + \speed^2)}\dot{\Rlog}^2 
			+ (1 + \speed^2)(\partial_{\mu} u^{\mu}) 
	\Big(-(\dot{u}^{0})^2 + g_{ab} \dot{u}^{a} \dot{u}^{b} \Big) \\
	& \ \ + 2 \speed^2 g_{ab} \bigg(\partial_t \Big[\frac{u^a}{u^0} \Big] \bigg)\dot{u}^b \dot{\Rlog} 
		+ 4 \speed^2 \omega  \frac{g_{ab} u^a \dot{u}^b}{u^0} \dot{\Rlog} \notag \\
	& \ \ + \underbrace{2(1+\speed^2)(3 \speed^2 - 1) \omega g_{ab}\dot{u}^a 	
		\dot{u}^b}_{\mbox{$\leq 0$ if $\speed^2 \leq 1/3$}} \notag \\
	& \ \ + \frac{2 \speed^2 \mathfrak{F}}{(1 + \speed^2)} \dot{\Rlog}
		- 2(1+\speed^2) \mathfrak{G}^0 \dot{u}^0
		+ 2(1+\speed^2) g_{ab} \mathfrak{G}^a \dot{u}^b. \notag
\end{align}

Above, the terms $\mathfrak{F} \eqdef \mathfrak{F}_{\vec{\alpha}}$ and $\mathfrak{G} \eqdef \mathfrak{G}_{\vec{\alpha}}^{\mu},$ are defined in \eqref{E:FalphamathfrakGjalphainhomogeneousterms} and \eqref{E:mathfrakG0alphainhomogeneousterm}. We now deduce \eqref{E:DIVJDOTL1} using the following four steps: \textbf{i)} we ignore the non-positive term $2(1+\speed^2)(3 \speed^2 - 1) \omega g_{ab}\dot{u}^a \dot{u}^b$ on the right-hand side of \eqref{E:DIVDOTJAGAIN}; \textbf{ii)} we bound each variation 
$\dot{\Rlog}, \dot{u}^{\mu}$ and $\mathfrak{F}_{\vec{\alpha}},$ $\mathfrak{G}_{\vec{\alpha}}^{\mu}$ in $L^2,$ and use the estimates \eqref{E:DOTU0L2INTERMSOFDOTUAL2}, \eqref{E:MATHFRAKFALPHAMATHFRAKGALPHAL2}, and \eqref{E:MATHFRAKG0ALPHAL2}; \textbf{iii)} we bound all of the remaining terms in $L^{\infty}$ and use Sobolev embedding together with the estimates \eqref{E:U0UPPERHN} and \eqref{E:PARTIALTU0UPPERHNMINUSONE} - \eqref{E:PARTIALTUJUPPERHNMINUSONE}; \textbf{iv)} we make repeated use of the estimate $\| v_1 v_2 v_3 \|_{L^1} \leq \| v_1 \|_{L^{\infty}} \| v_2 \|_{L^2} \| v_3 \|_{L^2},$ where $v_2$ and $v_3$ are terms estimated in step \textbf{ii)}, and $v_1$ is estimated in step \textbf{iii)}.

The proofs of \eqref{E:DIVJDOTL1DUSTVELOCITY} - \eqref{E:DIVJDOTL1DUSTDENSITY} (in the case $\speed^2 = 0$) are similar but simpler (use the expressions \eqref{E:DIVDOTJ0SPEEDVELOCITY} - \eqref{E:DIVDOTJ0SPEEDDENSITY}). The proof of \eqref{E:DIVJDOTL1} in the case $\speed^2 = 1/3$ is provided in the next paragraph. 
\\

\noindent \emph{Details for the special case $\speed^2 = 1/3$}: We now address the case $\speed^2 = 1/3.$ Our goal is to show that in this case, many of the estimates \eqref{E:U0UPPERHN} - \eqref{E:DIVJDOTL1} are valid
without the $\omega \cdots$ terms on the right-hand side. The fact that these estimates hold without these terms is a consequence of special algebraic cancellation. This cancellation, which is connected to the conformal invariance of the relativistic Euler equations when $\speed^2 = 1/3,$ is discussed in Section \ref{S:PURERADIATION} in more detail. We will only demonstrate the cancellation for the estimate \eqref{E:DIVJDOTL1}; this is the only estimate needed to derive the crucial inequality \eqref{E:ENINTEGRAL} below in the case $\speed^2 = 1/3.$ In Section \ref{S:PURERADIATION}, we will show that when $\speed^2 = 1/3,$ $(\rho,u)$ verify the relativistic Euler equations \eqref{E:EULERINTROP} - \eqref{E:EULERINTROU} corresponding to the metric $g$ if and only if the rescaled variables $(\rho',U) \eqdef (e^{4 \Omega}\rho,e^{\Omega}u)$ verify the relativistic Euler equations \eqref{E:EULERINTROP} - \eqref{E:EULERINTROU} corresponding to the Minkowski metric $m = e^{-2 \Omega} g.$ Now if we consider the change-of-time-variable $\frac{d \tau}{d t} = e^{-\Omega(t)},$ $\tau(t=1) = 1,$ then \eqref{E:CONFORMALMETRICFORM} shows that we have $m = -d \tau^2 + \sum_{j=1}^3 (dx^j)^2.$ Therefore, it follows that $\Rlog \eqdef \ln\big(e^{4 \Omega(t \circ \tau)} \rho/\bar{\rho} \big),$ $U^j \eqdef e^{\Omega(t \circ \tau)}u^j,$ verify the 
relativistic Euler equations \eqref{E:FINALEULERP} - \eqref{E:FINALEULERUJ} corresponding to the Minkowski metric $m = -d \tau^2 + \sum_{j=1}^3 (dx^j)^2$ and the coordinates $(\tau,x^1,x^2,x^3).$ In particular, in these new fluid and time variables, the right-hand sides of \eqref{E:FINALEULERP} - \eqref{E:FINALEULERUJ} vanish, and $U^{\tau} = (1 + \delta_{ab}U^aU^b)^{1/2} = u^0.$ Here, $\delta_{ab}$ is the Kronecker delta and the index ``$\tau$'' is meant to emphasize that we are referring to the time component of $U$ relative to the new time coordinate system (while $u^0$ refers to the time component of $u$ relative to the original ``$t$'' coordinate system). Furthermore, relative to these new variables, we can define a current analogous to the current $\dot{J}^{\mu}$
defined in \eqref{E:ENERGYCURRENT}:

\begin{subequations}
\begin{align} \label{E:ENERGYCURRENTRESCALED}
	\dot{\mathscr{J}}^{\tau} & \eqdef \frac{\speed^2 U^{\tau}}{(1 + \speed^2)}\dot{\Rlog}^2 
		+ 2 \speed^2 \dot{U}^{\tau} \dot{\Rlog} 
		+ (1 + \speed^2)U^{\tau} [-(\dot{U}^{\tau})^2 + \delta_{ab} \dot{U}^{a} \dot{U}^{b}], \\
	\dot{\mathscr{J}}^{j} & \eqdef \frac{\speed^2 U^{j}}{(1 + \speed^2)}\dot{\Rlog}^2 + 2 \speed^2 \dot{U}^{j} \dot{\Rlog} 
		+ (1 + \speed^2) U^{j} [-(\dot{U}^{\tau})^2 + \delta_{ab}\dot{U}^{a} \dot{U}^{b}],
\end{align}
\end{subequations}
where $\dot{U}^j = e^{\Omega} \dot{u}^j,$ $\dot{U}^{\tau} = \frac{1}{U^{\tau}} \delta_{ab} U^a \dot{U}^b
= \frac{1}{u^0} g_{ab} u^a \dot{u}^b = \dot{u}^0.$ Note that $\dot{\mathscr{J}}^{\tau} = \dot{J}^0,$  
$\dot{\mathscr{J}}^{j} = e^{\Omega} \dot{J}^j,$ and  

\begin{align} \label{E:CURRENTRELATIONS}
	\partial_{\tau} \dot{\mathscr{J}}^{\tau} + \partial_{a} \dot{\mathscr{J}}^{a}
	= e^{\Omega} \big(\partial_t \dot{J}^0 + \partial_a \dot{J}^a \big) \eqdef e^{\Omega} \partial_{\mu} \dot{J}^{\mu}.
\end{align}

Setting $\boldsymbol{\mathcal{W}} \eqdef (\Rlog,U^1,U^2,U^3),$ using the vanishing of the right-hand sides of \eqref{E:FINALEULERP} - \eqref{E:FINALEULERUJ} in the new coordinate + fluid variables, and noting definition \eqref{E:FLUIDNORMDEF}, our prior proof of \eqref{E:DIVJDOTL1} (under the assumption that the metric is the
standard Minkowski metric) yields 

\begin{align} \label{E:RESCALEDDIVERGENCEBOUND}
	\sum_{|\vec{\alpha}| \leq N} 
		& \int_{\mathbb{R}^3} \partial_{\tau} \big(\dot{\mathscr{J}}^{\tau}[\partial_{\vec{\alpha}} \boldsymbol{\mathcal{W}}, 
			\partial_{\vec{\alpha}} \boldsymbol{\mathcal{W}}]\big)
			+ \partial_{a} \big(\dot{\mathscr{J}}^{a}[\partial_{\vec{\alpha}} \boldsymbol{\mathcal{W}}, \partial_{\vec{\alpha}} 
			\boldsymbol{\mathcal{W}}]\big) \, d^3 x \\
		& \lesssim \| \Rlog \|_{H^N}^2 + \sum_{j=1}^3 \| U^j \|_{H^N}^2. \notag
\end{align}
Using \eqref{E:CURRENTRELATIONS}, \eqref{E:RESCALEDDIVERGENCEBOUND}, and definition \eqref{E:FLUIDNORMDEF}, we deduce 

\begin{align} \label{E:RESCALEDESTIMATE}
	e^{\Omega} \sum_{|\vec{\alpha}| \leq N} & \int_{\mathbb{R}^3} \partial_{\mu}
		\big(\dot{J}^{\mu}[\partial_{\vec{\alpha}} \mathbf{W}, \partial_{\vec{\alpha}} \mathbf{W}]\big) \, d^3 x \\
	& = \sum_{|\vec{\alpha}| \leq N} \int_{\mathbb{R}^3} 
			\partial_{\tau}\big(\dot{\mathscr{J}}^{\tau}[\partial_{\vec{\alpha}} \boldsymbol{\mathcal{W}}, 
			\partial_{\vec{\alpha}} \boldsymbol{\mathcal{W}}]\big) 
		+ \partial_a \big(\dot{\mathscr{J}}^a[\partial_{\vec{\alpha}} \boldsymbol{\mathcal{W}}, 
			\partial_{\vec{\alpha}} \boldsymbol{\mathcal{W}}]\big) \, d^3 x \notag \\
	& \lesssim \| \Rlog \|_{H^N}^2 + \sum_{j=1}^3 \| U^j \|_{H^N}^2 \eqdef \| \Rlog \|_{H^N}^2 
		+ e^{\Omega(t)} \sum_{j=1}^3 \| u^j \|_{H^N}^2 \eqdef \fluidnorm{N}^2, \notag
\end{align}
which is the desired estimate.

\end{proof}

\section{The Energy Norm vs. Sobolev Norms Comparison Proposition} \label{S:NORMVSENERGY}

In this section, we prove a proposition that compares the coercive properties of the norms $\mathcal{U}_{N-1},$ $\fluidnorm{N},$ and $\mathcal{S}_{N;velocity}$ to the coercive properties of the energies $\fluidenergy{N},$ $\mathcal{E}_{N;velocity},$ and $\mathcal{E}_{N-1;density}.$

\begin{proposition}[\textbf{Equivalence of Sobolev Norms and Energy Norms}] \label{P:ENERGYNORMCOMPARISON} 
Let $N \geq 3$ be an integer, and assume that $\fluidnorm{N}(t) \leq \epsilon$ on 
$[1,T).$ Then there exist implicit constants such that if $\epsilon$ is sufficiently small, then the following estimates hold for the norms $\mathcal{U}_{N-1},$ $\fluidnorm{N},$ and $\mathcal{S}_{N;velocity}$ defined in Definition \ref{D:NORMS} and the energies $\fluidenergy{N},$ $\mathcal{E}_{N;velocity},$ and $\mathcal{E}_{N-1;density}$ defined in Definition \ref{D:ENERGY}:
	
	\begin{align} \label{E:NORMENERGYCOMPARISON}
	\begin{array}{ll} 
	 	\mathcal{E}_{N;velocity}^2 + \mathcal{E}_{N-1;density}^2 \approx \fluidnorm{N}^2, & \speed^2 = 0, \\
	 	\mathcal{E}_{N;velocity}^2  \approx \mathcal{S}_{N;velocity}^2, & \speed^2 = 0, \\
	 	\mathcal{U}_{N-1}^2 + \fluidenergy{N}^2 \approx \fluidnorm{N}^2, & 0 < \speed^2 < 1/3, \\
	  \fluidenergy{N}^2 \approx \fluidnorm{N}^2, & \speed^2 = 1/3.
	  \end{array}
	\end{align}
	
\end{proposition}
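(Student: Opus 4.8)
The plan is to reduce the proposition to a pointwise algebraic coercivity estimate for the zeroth components $\dot{J}^0$, $\dot{J}_{velocity}^0$, $\dot{J}_{density}^0$ of the energy currents, applied with the variations taken to be the spatial derivatives $\partial_{\vec\alpha}\mathbf{W}$, and then to integrate over $\mathbb{R}^3$ and sum over $|\vec\alpha|$. Two pointwise facts drive everything. First, from $\fluidnorm{N}(t)\le\epsilon$, Definition \ref{D:NORMS}, and Sobolev embedding (valid since $N\ge3$), one gets $e^{\Omega}|u^j|\lesssim\epsilon$ in the cases $0<\speed^2\le1/3$ and $e^{2\Omega}|u^j|\lesssim\epsilon$ in the case $\speed^2=0$; since $\Omega\ge0$, in all cases $g_{ab}u^au^b=e^{2\Omega}\sum_j(u^j)^2\lesssim\epsilon^2$, so by \eqref{E:U0UPPERISOLATED} we have $1\le u^0=(1+g_{ab}u^au^b)^{1/2}\le1+C\epsilon^2$, i.e. $u^0\approx1$ for $\epsilon$ small. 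Second, because the spatial metric $g_{ab}$ is positive definite, Cauchy--Schwarz gives $(g_{ab}u^a\dot u^b)^2\le(g_{ab}u^au^b)(g_{cd}\dot u^c\dot u^d)$; combined with $\dot u^0=\frac{1}{u^0}g_{ab}u^a\dot u^b$ and $g_{ab}u^au^b=(u^0)^2-1\lesssim\epsilon^2$ this yields $(\dot u^0)^2\le\frac{(u^0)^2-1}{(u^0)^2}g_{ab}\dot u^a\dot u^b\le C\epsilon^2\,g_{ab}\dot u^a\dot u^b$, hence the indefinite form appearing in the currents satisfies $-(\dot u^0)^2+g_{ab}\dot u^a\dot u^b\approx g_{ab}\dot u^a\dot u^b=e^{2\Omega}\sum_j(\dot u^j)^2$ with constants independent of the solution once $\epsilon$ is small.

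With these in hand, the pointwise coercivity bounds are elementary quadratic-form estimates. For the current \eqref{E:ENERGYCURRENT} in the cases $0<\speed^2\le1/3$, abbreviating $X=\dot{\Rlog}$, $Y=\dot u^0$, $Z^2=g_{ab}\dot u^a\dot u^b\ge0$, we have $\dot J^0=\frac{\speed^2u^0}{1+\speed^2}X^2+2\speed^2XY+(1+\speed^2)u^0(Z^2-Y^2)$. The upper bound $\dot J^0\lesssim X^2+Z^2$ is immediate after dropping $-(1+\speed^2)u^0Y^2\le0$ and applying Young's inequality to $2\speed^2XY$ together with $|Y|\le C\epsilon Z$; for the lower bound one again uses $u^0\approx1$, $|Y|\le C\epsilon Z$, and Young's inequality to absorb the cross term into the positive $X^2$ and $Z^2$ contributions (here the strict positivity $\speed^2>0$ is used), obtaining $\dot J^0[\dot{\mathbf{W}},\dot{\mathbf{W}}]\approx X^2+Z^2=\dot{\Rlog}^2+e^{2\Omega}\sum_j(\dot u^j)^2$. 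In the case $\speed^2=0$ there is no cross term: $\dot{J}_{density}^0=u^0\dot{\Rlog}^2\approx\dot{\Rlog}^2$ directly from $u^0\approx1$, and $\dot{J}_{velocity}^0=e^{2\Omega}u^0(Z^2-Y^2)\approx e^{4\Omega}\sum_j(\dot u^j)^2$ from the second pointwise fact.

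To finish, apply these bounds with $\dot{\mathbf W}=\partial_{\vec\alpha}\mathbf W$ and sum over $|\vec\alpha|\le N$ (resp. $|\vec\alpha|\le N-1$ for $\mathcal{E}_{N-1;density}$), using the definitions of the energies in Definition \ref{D:ENERGY}; this gives $\fluidenergy{N}^2\approx\|\Rlog\|_{H^N}^2+e^{2\Omega}\sum_j\|u^j\|_{H^N}^2$ when $0<\speed^2\le1/3$, $\mathcal{E}_{N;velocity}^2\approx e^{4\Omega}\sum_j\|u^j\|_{H^N}^2\approx\mathcal{S}_{N;velocity}^2$, and $\mathcal{E}_{N-1;density}^2\approx\|\Rlog\|_{H^{N-1}}^2$. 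Comparing with the definition of $\fluidnorm{N}$ in each regime --- and, in the cases $0<\speed^2<1/3$, adding the nonnegative quantity $\mathcal{U}_{N-1}^2$ to both sides and using $(a+b+c)^2\approx a^2+b^2+c^2$ for nonnegative $a,b,c$ --- yields the four equivalences in \eqref{E:NORMENERGYCOMPARISON}. The only point requiring care is the lower bound for $\dot J^0$ in the cases $0<\speed^2\le1/3$: one must verify that the $\epsilon$-small contributions coming from $-(\dot u^0)^2$ and from $2\speed^2\dot{\Rlog}\dot u^0$ are both dominated by the manifestly positive $\dot{\Rlog}^2$ and $e^{2\Omega}\sum_j(\dot u^j)^2$ terms, which is precisely where the Cauchy--Schwarz estimate $(\dot u^0)^2\lesssim\epsilon^2\,e^{2\Omega}\sum_j(\dot u^j)^2$ is used.
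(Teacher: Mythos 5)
Your proof is correct and follows essentially the same strategy as the paper's: reduce to pointwise coercivity of $\dot{J}^0$ (resp. $\dot{J}^0_{velocity}$, $\dot{J}^0_{density}$) applied to the derivatives $\partial_{\vec\alpha}\mathbf{W}$, using $u^0\approx 1$ together with the Cauchy--Schwarz control $(\dot u^0)^2\lesssim\epsilon^2\,g_{ab}\dot u^a\dot u^b$ to tame the $-(\dot u^0)^2$ piece and the $\dot{u}^0\dot{\Rlog}$ cross term, then integrate and sum. The only cosmetic difference is that the paper uses the sharper decay estimate $(\dot u^0)^2\lesssim\epsilon^2\DecayFunction^{-2}(\Omega)\,g_{ab}\dot u^a\dot u^b$ coming from the $\mathcal{U}_{N-1}$ norm, but your weaker $\epsilon^2$ bound already suffices for the equivalence.
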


\begin{proof}
In the cases $0 < \speed^2 < 1/3,$ to prove $\mathcal{U}_{N-1}^2 + \fluidenergy{N}^2 \approx \fluidnorm{N}^2,$ we only need to show 

\begin{align} \label{E:JDOT0TOPESTIMATE}
	\sum_{|\vec{\alpha}| \leq N} & \Big\lbrace \| \partial_{\vec{\alpha}} \Rlog  \|_{L^2}^2 
		+ e^{2 \Omega} \sum_{a=1}^3 \| \partial_{\vec{\alpha}} u^a \|_{L^2}^2 \Big\rbrace 
	\approx \sum_{|\vec{\alpha}| \leq N} 
		\int_{\mathbb{R}^3} \dot{J}^0[\partial_{\vec{\alpha}} \mathbf{W},\partial_{\vec{\alpha}} \mathbf{W}] \, d^3 x.
\end{align}
The desired inequalities would then follow easily from \eqref{E:JDOT0TOPESTIMATE} and Definitions \ref{D:NORMS} and \ref{D:ENERGY}. To prove \eqref{E:JDOT0TOPESTIMATE}, for notational convenience, we set $\mathbf{W} \eqdef (\Rlog, u^1,u^2,u^3)^T,$ $\dot{u}^j \eqdef \partial_{\vec{\alpha}} u^j,$ $\dot{\Rlog} \eqdef \partial_{\vec{\alpha}}\Rlog,$ and as in \eqref{E:DOT0INTERMSOFDOTJ}, we define $\dot{u}^0 \eqdef \frac{1}{u^0}u_a \dot{u}^a.$ We now recall definition \eqref{E:ENERGYCURRENT}:

\begin{align}
	\dot{J}^0 & \eqdef \frac{\speed^2 u^0}{(1 + \speed^2)}\dot{\Rlog}^2 + 2 \speed^2  \dot{u}^0 \dot{\Rlog} 
		+ (1 + \speed^2) u^0 g_{\alpha \beta} \dot{u}^{\alpha} \dot{u}^{\beta}.
\end{align}
Using $0 < \speed^2 \leq \frac{1}{3},$ \eqref{E:U0UPPERHN}, Sobolev embedding, and the simple estimate $|2 \speed^2 \dot{u}^0 \dot{\Rlog}| \leq \frac{\speed^2}{2(1 + \speed^2)}\dot{\Rlog}^2 + 2\speed^2(1 + \speed^2) \Big(\frac{1}{u^0} u_a \dot{u}^a \Big)^2,$ it follows that

\begin{align} \label{E:JDOT0INEQUALITY}
	 \dot{J}^0  & \geq \frac{\speed^2 (u^0 - 1/2)}{(1 + \speed^2)} \dot{\Rlog}^2 
		+ (1 + \speed^2)u^0 g_{ab}\dot{u}^a \dot{u}^b 
		- (1 + \speed^2)(u^0 + 2 \speed^2) \Big(\frac{1}{u^0} u_a \dot{u}^a \Big)^2 \\
	& \geq C_1 (\dot{\Rlog}^2 + e^{2 \Omega} \delta_{ab} \dot{u}^a \dot{u}^b) 
		- C_2 \epsilon^2 e^{2\Omega} \DecayFunction^{-2}(\Omega) \delta_{ab} \dot{u}^a \dot{u}^b 
		\gtrsim \dot{\Rlog}^2 + e^{2 \Omega} \delta_{ab} \dot{u}^a \dot{u}^b. \notag
\end{align}
A reverse inequality can similarly be shown. Integrating these inequalities over $\mathbb{R}^3$ and
summing over all derivatives $\partial_{\vec{\alpha}} \mathbf{W}$ with $|\vec{\alpha}| \leq N,$ we deduce \eqref{E:JDOT0TOPESTIMATE}. The remaining estimates in \eqref{E:NORMENERGYCOMPARISON} (in the cases $\speed^2 = 0, 1/3$)
can be proved similarly.

\end{proof}

\section{Integral Inequalities for the Energy and Norms} \label{S:INTEGRALINEQUALITY}

In this section, we use the Sobolev estimates of Section \ref{S:SOBOLEV} to derive energy and norm integral inequalities. These inequalities form the backbone of our future stability proof.

\begin{proposition}[\textbf{Integral Inequalities}] \label{P:INTEGRALENERGYINEQUALITIES}

Let $(\Rlog,u^1,u^2,u^3)$ be a classical solution to the relativistic Euler equations \eqref{E:FINALEULERP} - \eqref{E:FINALEULERUJ} on the spacetime slab $[1,T) \times \mathbb{R}^3.$ Let $\mathcal{U}_{N-1},$ $\fluidnorm{N},$ and $\mathcal{S}_{N;velocity}$ be the norms in Definition \ref{D:NORMS}, and let $\fluidenergy{N},$ $\mathcal{E}_{N;velocity},$ and $\mathcal{E}_{N-1;density}$ be the energies in Definition \ref{D:ENERGY}. Let $N \geq 3$ be an integer, and assume that $\fluidnorm{N}(t) \leq \epsilon$ on $[1,T).$ If $\speed^2 = 0$ and $\epsilon$ is sufficiently small, then the following integral inequalities are verified for $1 \leq t_1 \leq t < T:$	
	
	\begin{subequations}  
	\begin{align} \label{E:ENINTEGRALDUSTVELOCITY}
		\mathcal{E}_{N;velocity}^2(t) & \leq \mathcal{E}_{N;velocity}^2(t_1)
			+ C \int_{s = t_1}^t \big\lbrace \omega(s) + 1 \big\rbrace e^{-2 \Omega(s)} \mathcal{S}_{N;velocity}^2(s) \, d s, 
	\end{align}
	
	\begin{align} \label{E:ENINTEGRALDUSTDENSITY} 
		\mathcal{E}_{N-1;density}^2(t) & \leq \mathcal{E}_{N-1;density}^2(t_1) 
			+ C \int_{s = t_1}^t \big\lbrace \omega(s) + 1 \big\rbrace e^{-2 \Omega(s)} \mathcal{S}_{N}^2(s) \, d s. 
	\end{align}
	\end{subequations}

If $0 < \speed^2 \leq 1/3$ and $\epsilon$ is sufficiently small, then the following integral inequality is verified for $1 \leq t_1 \leq t < T:$	
	
\begin{align} \label{E:ENINTEGRAL} 
		\fluidenergy{N}^2(t) 
		& \leq \fluidenergy{N}^2(t_1) 
			+ C \int_{s = t_1}^t \underbrace{\omega(s) \DecayFunction^{-1}(\Omega(s)) \fluidnorm{N}^2(s)}_{\mbox{absent if 
			$\speed^2 = 1/3$}}
			+ e^{- \Omega(s)} \fluidnorm{N}^2(s) \, d s. 
\end{align}
	
If $0 < \speed^2 < 1/3$ and $\epsilon$ is sufficiently small, then the following integral inequality is verified for $1 \leq t_1 \leq t < T:$

\begin{align} \label{E:UNMINUSONEINTEGRAL} 
		\mathcal{U}_{N-1}^2(t) 
		& \leq \mathcal{U}_{N-1}^2(t_1)  
		+ 2 \int_{s = t_1}^t \overbrace{\Big\lbrace 3 \speed^2 - 1 + 
			\frac{\DecayFunction'(\Omega(s))}{\DecayFunction(\Omega(s))}}^{
			\mbox{$\leq 0 $ for large $\Omega$}} \Big\rbrace \omega(s) \mathcal{U}_{N-1}^2(s) \, ds   \\
		& \ \ + C \int_{s = t_1}^t \omega(s) \DecayFunction^{-1}(\Omega(s)) \mathcal{U}_{N-1}^2(s)
			+ e^{- \Omega(s)}\DecayFunction(\Omega(s)) \fluidnorm{N}^2(s) \, d s. \notag
	\end{align}

\end{proposition}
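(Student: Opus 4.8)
The plan is to obtain each of the stated integral inequalities by combining the preliminary time-derivative relations of Corollary \ref{C:fluidenergytimederivative} with the Sobolev bounds of Proposition \ref{P:NONLINEARITIES}, and then integrating in $t$ over $[t_1,t].$ This proposition is essentially an assembly step: the substantive analytic work sits in Proposition \ref{P:NONLINEARITIES}. The smallness hypothesis $\fluidnorm{N}(t) \leq \epsilon$ on $[1,T)$ is precisely what licenses the use of Proposition \ref{P:NONLINEARITIES}, and the regularity of the solution supplied by Theorem \ref{T:LOCAL} ensures that $\fluidenergy{N}^2,$ $\mathcal{E}_{N;velocity}^2,$ $\mathcal{E}_{N-1;density}^2,$ and $\mathcal{U}_{N-1}^2$ are absolutely continuous functions of $t,$ so that the fundamental theorem of calculus applies to each differential relation.

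\textbf{The case $\speed^2 = 0.$} Starting from the identities \eqref{E:FLUIDENERGYTIMEDERIVATIVE0SPEEDVELOCITY} and \eqref{E:FLUIDENERGYTIMEDERIVATIVE0SPEEDDENSITY}, which express $\frac{d}{dt}\mathcal{E}_{N;velocity}^2$ and $\frac{d}{dt}\mathcal{E}_{N-1;density}^2$ as the spatial integrals of the current divergences (summed over $|\vec{\alpha}| \leq N$ and $|\vec{\alpha}| \leq N - 1$ respectively), I would insert the bounds \eqref{E:DIVJDOTL1DUSTVELOCITY} and \eqref{E:DIVJDOTL1DUSTDENSITY}. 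This produces $\frac{d}{dt}\mathcal{E}_{N;velocity}^2 \lesssim (\omega + 1)e^{-2\Omega}\mathcal{S}_{N;velocity}^2$ and $\frac{d}{dt}\mathcal{E}_{N-1;density}^2 \lesssim (\omega + 1)e^{-2\Omega}\fluidnorm{N}^2,$ and integrating from $t_1$ to $t$ gives \eqref{E:ENINTEGRALDUSTVELOCITY} and \eqref{E:ENINTEGRALDUSTDENSITY}.

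\textbf{The cases $0 < \speed^2 \leq 1/3.$} Here I would use the identity \eqref{E:ENTIMEDERIVATIVE}, which represents $\frac{d}{dt}\fluidenergy{N}^2$ as $\sum_{|\vec{\alpha}| \leq N}\int_{\mathbb{R}^3}\partial_{\mu}(\dot{J}^{\mu}[\partial_{\vec{\alpha}}\mathbf{W},\partial_{\vec{\alpha}}\mathbf{W}])\, d^3 x,$ and bound the right-hand side by \eqref{E:DIVJDOTL1}. This yields $\frac{d}{dt}\fluidenergy{N}^2 \lesssim \omega \DecayFunction^{-1}(\Omega)\fluidnorm{N}^2 + e^{-\Omega}\fluidnorm{N}^2$ when $0 < \speed^2 < 1/3$ and $\frac{d}{dt}\fluidenergy{N}^2 \lesssim e^{-\Omega}\fluidnorm{N}^2$ when $\speed^2 = 1/3,$ and integrating over $[t_1,t]$ produces \eqref{E:ENINTEGRAL}, with the $\omega \DecayFunction^{-1}(\Omega)\fluidnorm{N}^2$ contribution genuinely absent in the endpoint case.

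\textbf{The norm $\mathcal{U}_{N-1}$ when $0 < \speed^2 < 1/3.$} I would begin with the differential inequality \eqref{E:UNORMTIMEDERIVATIVE}. Its first right-hand term, $2\{3\speed^2 - 1 + \DecayFunction'(\Omega)/\DecayFunction(\Omega)\}\omega\mathcal{U}_{N-1}^2,$ should be retained verbatim: hypothesis \ref{A:A3} makes its coefficient non-positive for large $\Omega,$ and this sign is what is later exploited via Gronwall, so it must not be absorbed into a positive error term. For the remaining term I would insert the bound \eqref{E:TRIANGLEPRIMEJHNMINUSONE} on $\|\triangle'^a\|_{H^{N-1}},$ which gives $e^{\Omega}\DecayFunction(\Omega)\|\triangle'^a\|_{H^{N-1}} \lesssim \omega \DecayFunction^{-1}(\Omega)\mathcal{U}_{N-1} + e^{-\Omega}\DecayFunction(\Omega)\fluidnorm{N};$ multiplying by $2\mathcal{U}_{N-1}$ and summing over $a$ gives a bound $\lesssim \omega \DecayFunction^{-1}(\Omega)\mathcal{U}_{N-1}^2 + e^{-\Omega}\DecayFunction(\Omega)\fluidnorm{N}\mathcal{U}_{N-1},$ and the cross term is absorbed using $\mathcal{U}_{N-1} \leq \fluidnorm{N}$ (immediate from Definition \ref{D:NORMS}), yielding an overall error bound $C\{\omega \DecayFunction^{-1}(\Omega)\mathcal{U}_{N-1}^2 + e^{-\Omega}\DecayFunction(\Omega)\fluidnorm{N}^2\}.$ Integrating from $t_1$ to $t$ gives \eqref{E:UNMINUSONEINTEGRAL}. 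No step here is deep; the only points demanding care are keeping the three regimes $\speed^2 = 0,$ $0 < \speed^2 < 1/3,$ $\speed^2 = 1/3$ distinct throughout, and, in this last part, preserving rather than discarding the structure of the eventually-non-positive coefficient appearing in \eqref{E:UNORMTIMEDERIVATIVE}.
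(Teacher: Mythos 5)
Your proposal is correct and follows essentially the same route as the paper: insert the estimates \eqref{E:DIVJDOTL1DUSTVELOCITY}, \eqref{E:DIVJDOTL1DUSTDENSITY}, \eqref{E:DIVJDOTL1}, and \eqref{E:TRIANGLEPRIMEJHNMINUSONE} into the preliminary differential relations of Corollary \ref{C:fluidenergytimederivative}, preserve the sign-definite coefficient in the $\mathcal{U}_{N-1}$ inequality, absorb the cross term via $\mathcal{U}_{N-1} \leq \fluidnorm{N},$ and integrate over $[t_1,t].$ This is precisely the paper's argument, just written out with slightly more explicit care about the cross-term absorption and the justification for integrating.
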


\begin{proof}
	To prove \eqref{E:UNMINUSONEINTEGRAL}, we first use \eqref{E:UNORMTIMEDERIVATIVE} and \eqref{E:TRIANGLEPRIMEJHNMINUSONE} to 
	deduce that
	
	\begin{align} \label{E:Unormtimederivativeagain}
		\frac{d}{dt}\big(\mathcal{U}_{N-1}^2 \big) & \leq 2 \overbrace{\Big\lbrace 3 \speed^2 - 1 + 	
			\frac{\DecayFunction'(\Omega)}{\DecayFunction(\Omega)}}^{\mbox{$\leq 0 $ for large $\Omega$}} \Big\rbrace \omega 
			\mathcal{U}_{N-1}^2
			+ 2 e^{ \Omega} \DecayFunction(\Omega) \mathcal{U}_{N-1} \sum_{a=1}^3 \| \triangle'^a \|_{H^{N-1}}  \\
		& \leq 2 \overbrace{\Big\lbrace 3 \speed^2 - 1 + 	
			\frac{\DecayFunction'(\Omega)}{\DecayFunction(\Omega)}}^{\mbox{$\leq 0 $ for large $\Omega$}} \Big\rbrace \omega 
			\mathcal{U}_{N-1}^2
			+ C \omega \DecayFunction^{-1}(\Omega) \mathcal{U}_{N-1}^2
			+ C e^{-\Omega}\DecayFunction(\Omega) \fluidnorm{N}^2. \notag
	\end{align}
	Integrating \eqref{E:Unormtimederivativeagain} from $t_1$ to $t,$ we deduce \eqref{E:UNMINUSONEINTEGRAL}.
	
	Inequalities \eqref{E:ENINTEGRALDUSTVELOCITY} - \eqref{E:ENINTEGRALDUSTDENSITY} 
	and \eqref{E:ENINTEGRAL} follow similarly from \eqref{E:FLUIDENERGYTIMEDERIVATIVE0SPEEDVELOCITY} - 
	\eqref{E:FLUIDENERGYTIMEDERIVATIVE0SPEEDDENSITY}, \eqref{E:ENTIMEDERIVATIVE} and
	\eqref{E:DIVJDOTL1DUSTVELOCITY} - \eqref{E:DIVJDOTL1DUSTDENSITY}, \eqref{E:DIVJDOTL1}.
	
\end{proof}

\section{The Future Stability Theorem} \label{S:GLOBALEXISTENCE}

In this section, we state and prove our future stability theorem. The proof is through a standard bootstrap argument based on the continuation principle (Proposition \ref{P:CONTINUATION}) and the integral inequalities of Proposition \ref{P:INTEGRALENERGYINEQUALITIES}.

\begin{theorem}[\textbf{Future Stability of the Explicit Fluid Solutions}] \label{T:GLOBALEXISTENCE}
Assume that $0 \leq \speed^2 \leq 1/3,$ that the metric scale factor verifies the hypotheses \ref{A:A1} - \ref{A:A3} from Section \ref{S:INTRO}, and that $N \geq 3$ is an integer. Let $\mathring{\Rlog} \eqdef \Rlog |_{t=1} \eqdef \ln \big(\rho/\bar{\rho}\big)|_{t=1}$ $\mathring{u}^j \eqdef u^j|_{t=1}$ $(j=1,2,3)$ be initial data for the relativistic Euler equations \eqref{E:FINALEULERP} - \eqref{E:FINALEULERUJ}, and let $\fluidnorm{N}(t)$ be the norm defined in Definition \ref{D:NORMS}. There exist a small constant $\epsilon_0$ with $0 < \epsilon_0 < 1$ and a large constant $C_*$ such that if $\epsilon \leq \epsilon_0$ and $\fluidnorm{N}(1) = C_*^{-1} \epsilon,$ then the classical solution $\Big(\Rlog = \ln \big(e^{3(1+\speed^2)\Omega} \rho/\bar{\rho} \big), u \Big)$ provided by Theorem \ref{T:LOCAL} exists 
for $(t,x^1,x^2,x^3) \in [1,\infty) \times \mathbb{R}^3$ and furthermore, the following estimate holds for $t \geq 1:$

\begin{align} 
	\fluidnorm{N}(t) & \leq \epsilon. \label{E:FLUIDNORMLESSTHANEPSILON}
\end{align}
In addition, $T_{max} = \infty,$ where $T_{max}$ is the time from the hypotheses of Proposition \ref{P:CONTINUATION}.

\end{theorem}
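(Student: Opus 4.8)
The plan is to combine the integral inequalities of Proposition~\ref{P:INTEGRALENERGYINEQUALITIES}, the norm/energy equivalences of Proposition~\ref{P:ENERGYNORMCOMPARISON}, Gronwall's inequality, and the continuation principle (Proposition~\ref{P:CONTINUATION}) in a standard continuity argument. Fix $N \geq 3$ and let $T_{max}$ be as in Proposition~\ref{P:CONTINUATION}. For $\epsilon \leq \epsilon_0$ (with $\epsilon_0 \in (0,1)$ to be chosen) and data with $\fluidnorm{N}(1) = C_*^{-1}\epsilon$, let $\mathcal{T}$ be the set of $T \in (1,T_{max}]$ for which the bootstrap assumption $\fluidnorm{N}(t) \leq \epsilon$ holds on $[1,T)$. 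By Theorem~\ref{T:LOCAL}, by $\fluidnorm{N}(1) = C_*^{-1}\epsilon < \epsilon$, and by continuity of $t \mapsto \fluidnorm{N}(t)$, the set $\mathcal{T}$ is a nonempty subinterval of the form $(1,T^*)$ or $(1,T^*]$. The goal is to prove, for suitable $\epsilon_0$ and $C_*$, the strictly improved bound $\fluidnorm{N}(t) \leq \epsilon/2$ on $[1,T^*)$; a routine maximality argument then gives $T^* = T_{max}$, and the continuation principle then gives $T_{max} = \infty$.

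The key analytic input is that hypotheses~\ref{A:A1}--\ref{A:A3} render every driving coefficient in Proposition~\ref{P:INTEGRALENERGYINEQUALITIES} integrable on $[1,\infty)$. Using $d\Omega = \omega(t)\,dt$ and $\Omega(1) = 0$, one has $\int_1^\infty \omega(s)\DecayFunction^{-1}(\Omega(s))\,ds = \int_0^{\Omega(\infty)} \DecayFunction^{-1}(\widetilde\Omega)\,d\widetilde\Omega < \infty$ by~\ref{A:A3}; similarly $\int_1^\infty \omega(s)e^{-2\Omega(s)}\,ds = \int_0^{\Omega(\infty)} e^{-2\widetilde\Omega}\,d\widetilde\Omega \leq \tfrac12$; and $\int_1^\infty e^{-\Omega(s)}\DecayFunction(\Omega(s))\,ds$ and $\int_1^\infty e^{-2\Omega(s)}\,ds$ are finite by~\ref{A:A2}, whence also $\int_1^\infty e^{-\Omega(s)}\,ds < \infty$ (for large $s$, $e^{-\Omega(s)} \lesssim e^{-\Omega(s)}\DecayFunction(\Omega(s))$ since $\DecayFunction$ is increasing). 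Thus in each case the coefficient $f(s) \geq 0$ multiplying the solution norm on the right-hand side of the relevant inequality in Proposition~\ref{P:INTEGRALENERGYINEQUALITIES} satisfies $\|f\|_{L^1([1,\infty))} \leq M$, with $M$ depending only on $\speed$, $N$, and the scale factor.

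Now I run Gronwall's inequality on $[1,T^*)$, where Proposition~\ref{P:ENERGYNORMCOMPARISON} applies because the bootstrap assumption holds there. In the cases $0 < \speed^2 < 1/3$, set $\mathcal{Q}(t) \eqdef \mathcal{U}_{N-1}^2(t) + \fluidenergy{N}^2(t) \approx \fluidnorm{N}^2(t)$; adding \eqref{E:ENINTEGRAL} and \eqref{E:UNMINUSONEINTEGRAL}, discarding the nonpositive contribution of $2\int \{3\speed^2 - 1 + \DecayFunction'(\Omega(s))/\DecayFunction(\Omega(s))\}\omega(s)\mathcal{U}_{N-1}^2(s)\,ds$ (nonpositive once $\Omega$ is large by~\ref{A:A3}, and over the remaining compact time-set absorbed into $C\int \omega \DecayFunction^{-1}(\Omega)\mathcal{Q}$ after enlarging $C$), and using $\fluidnorm{N}^2 \lesssim \mathcal{Q}$, gives $\mathcal{Q}(t) \leq \mathcal{Q}(1) + C\int_1^t f(s)\mathcal{Q}(s)\,ds$, hence $\mathcal{Q}(t) \leq \mathcal{Q}(1)e^{CM}$ and $\fluidnorm{N}(t) \leq C' e^{CM/2} C_*^{-1}\epsilon$. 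Choosing $C_* \eqdef 2C'e^{CM/2}$ yields $\fluidnorm{N}(t) \leq \epsilon/2$. The case $\speed^2 = 1/3$ is identical with $\mathcal{Q} = \fluidenergy{N}^2$ and $f(s) = Ce^{-\Omega(s)}$. In the case $\speed^2 = 0$ one proceeds in two stages: \eqref{E:ENINTEGRALDUSTVELOCITY} together with $\mathcal{E}_{N;velocity}^2 \approx \mathcal{S}_{N;velocity}^2$ is a closed Gronwall inequality for $\mathcal{E}_{N;velocity}^2$ with coefficient $(\omega+1)e^{-2\Omega} \in L^1$, bounding $\mathcal{S}_{N;velocity}$; then \eqref{E:ENINTEGRALDUSTDENSITY} with $\fluidnorm{N}^2 \lesssim \mathcal{E}_{N;velocity}^2 + \mathcal{E}_{N-1;density}^2$ and the bound just obtained is a Gronwall inequality for $\mathcal{E}_{N-1;density}^2$ with $L^1$ forcing, after which $\mathcal{E}_{N;velocity}^2 + \mathcal{E}_{N-1;density}^2 \approx \fluidnorm{N}^2$ closes the estimate.

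It remains to finish the continuity argument and invoke the continuation principle. If $T^* < T_{max}$, continuity gives $\fluidnorm{N}(T^*) \leq \epsilon/2 < \epsilon$, and since the solution extends continuously past $T^* < T_{max}$ we would have $\fluidnorm{N}(t) \leq \epsilon$ slightly beyond $T^*$, contradicting maximality; hence $T^* = T_{max}$ and \eqref{E:FLUIDNORMLESSTHANEPSILON} holds on $[1,T_{max})$. Were $T_{max}$ finite, the embeddings $H^N \hookrightarrow C_b^1$ and $H^{N-1} \hookrightarrow L^\infty$ on $\mathbb{R}^3$ (valid since $N \geq 3$), the bound $e^{\Omega} \geq 1$, and the definition~\eqref{E:FLUIDNORMDEF} of $\fluidnorm{N}$ would bound the quantities appearing in Proposition~\ref{P:CONTINUATION} by $C\epsilon < \infty$ uniformly on $[1,T_{max})$, a contradiction; thus $T_{max} = \infty$. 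The main obstacle is the bookkeeping in the cases $0 < \speed^2 < 1/3$: tracking the coupled pair $(\mathcal{U}_{N-1},\fluidenergy{N})$, disposing cleanly of the sign-indefinite coefficient $3\speed^2 - 1 + \DecayFunction'(\Omega)/\DecayFunction(\Omega)$, and confirming that the estimates of Proposition~\ref{P:INTEGRALENERGYINEQUALITIES} produce only time-weights lying in $L^1([1,\infty))$.
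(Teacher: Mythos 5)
Your proposal is correct and follows essentially the same route as the paper: bootstrap assumption, the integral inequalities of Proposition~\ref{P:INTEGRALENERGYINEQUALITIES}, the norm/energy comparison of Proposition~\ref{P:ENERGYNORMCOMPARISON}, the $L^1([1,\infty))$-in-time integrability of the forcing coefficients guaranteed by \ref{A:A1}--\ref{A:A3}, and the continuation principle. The only notable organizational difference is that you apply a single Gronwall estimate over the whole interval $[1,T^*)$ (absorbing the sign-indefinite $\DecayFunction'/\DecayFunction$ coefficient on the initial compact time-set into the $L^1$ kernel), whereas the paper splits the argument into a far-time regime $[t_1,T)$ where that coefficient is already nonpositive and the bootstrap is used directly against a small tail integral, and a near-time regime $[1,t_1)$ controlled by a crude local Gronwall bound --- the two are bookkeeping variants of the same estimate.
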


\begin{proof}
We provide full details in the cases $0 < \speed^2 \leq 1/3;$ the remaining case $\speed^2 = 0$ is similar and simpler, thanks to the fact the evolution of the $u^{j}$ decouples from that of $\Rlog.$ By Theorem \ref{T:LOCAL}, if $C_*$ is sufficiently large and $\epsilon$ is sufficiently small, then there exists a non-trivial spacetime slab $[1,T_{local}) \times \mathbb{R}^3$ upon which a classical solution exists and upon which the following estimate holds:  

\begin{align} \label{E:FLUIDNORMNPROOFBA}
	\fluidnorm{N}(t) & \leq \epsilon. 
\end{align}
We define

\begin{align}
	T \eqdef \sup \big\lbrace T_{local}\geq 1 \ | \ \mbox{The solution exists on} \ [1,T_{local}) \ \mbox{and} \ 
		\eqref{E:FLUIDNORMNPROOFBA} \ \mbox{holds on} \ [1,T_{local}) \big\rbrace.
\end{align}
We will show that if $C_*$ is sufficiently large and $\epsilon$ is sufficiently small, then $T = \infty.$ Throughout the
remainder of the proof, we use the notation

\begin{align} \label{E:DATAEPSILONDEF}
	\mathring{\epsilon} & \eqdef \fluidnorm{N}(1) = C_*^{-1} \epsilon.
\end{align}

We first use the bootstrap assumption \eqref{E:FLUIDNORMNPROOFBA} (under the assumption that $\epsilon$ is sufficiently small), Proposition \ref{P:ENERGYNORMCOMPARISON}, the inequalities \eqref{E:ENINTEGRAL} - \eqref{E:UNMINUSONEINTEGRAL}, and the hypotheses \ref{A:A1} - \ref{A:A3} from Section \ref{S:INTRO} to derive the following inequality, which is valid for all sufficiently large $t_1$ and $t$ verifying $t_1 \leq t < T:$

\begin{align} \label{E:FLUIDNORMT1GRONWALLREADY}
	\fluidnorm{N}^2(t) & \leq C \fluidnorm{N}(t_1) 
		+ C \epsilon^2 \int_{s = t_1}^t \underbrace{\omega \DecayFunction^{-1}(\Omega)}_{\mbox{absent if 
		$\speed^2 = 1/3$}} + e^{- \Omega} \DecayFunction(\Omega) \, d s. 
\end{align}
Now our hypotheses \ref{A:A1} - \ref{A:A3} on $e^{\Omega}$ imply that the integrand on the right-hand 
side of \eqref{E:FLUIDNORMT1GRONWALLREADY} is integrable in $s$ over the interval $s \in [1,\infty).$ Therefore, if $t_1 < T$ and $t_1$ is sufficiently large, it follows that 

\begin{align} \label{E:GLOBALBOUNDALMOSTFINAL}
	\fluidnorm{N}^2(t) & < C \fluidnorm{N}(t_1) + \frac{1}{2}\epsilon^2, && t \in [t_1,T).
\end{align}
It remains to derive a suitable bound for $\fluidnorm{N}(t_1);$ this will be a standard local-existence-type estimate. To this end, for this fixed value of $t_1,$ we again use the bootstrap assumption \eqref{E:FLUIDNORMNPROOFBA}, Proposition \ref{P:ENERGYNORMCOMPARISON}, and \eqref{E:ENINTEGRAL} - \eqref{E:UNMINUSONEINTEGRAL} to derive 

\begin{align} \label{E:WEAKINEQUALITY}
	\fluidnorm{N}^2(t) & \leq C \mathring{\epsilon}^2 + c(t_1) \int_{s = 1}^t \fluidnorm{N}^2(s) \, d s,
	&& t \in [1,t_1).
\end{align}
Applying Gronwall's inequality to \eqref{E:WEAKINEQUALITY}, we deduce that

\begin{align} \label{E:WEAK}
	\fluidnorm{N}^2(t) & \leq C \mathring{\epsilon}^2 e^{c(t_1) t}, && t \in [1,t_1).
\end{align}
Furthermore, we note that by Proposition \ref{P:CONTINUATION}, Sobolev embedding, and the continuity of $\fluidnorm{N}(t),$ it follows from \eqref{E:WEAK} and definition \eqref{E:DATAEPSILONDEF} that if $\epsilon$ is sufficiently small and $C_*$ is sufficiently large, then

\begin{align} \label{E:TWEAK}
	t_1 < T.
\end{align}	
Combining \eqref{E:GLOBALBOUNDALMOSTFINAL} and \eqref{E:WEAK} and using definition \eqref{E:DATAEPSILONDEF}, we deduce that 

\begin{align} \label{E:NEARFINALINEQUALITY}
	\fluidnorm{N}^2(t) & \leq C e^{c(t_1)} \frac{\epsilon^2}{C_*^2} + \frac{1}{2}\epsilon^2, && t \in [1,T).
\end{align}
Therefore, if $\epsilon$ is sufficiently small and $C_*$ is sufficiently large, it follows from \eqref{E:NEARFINALINEQUALITY} that

\begin{align} \label{E:GLOBALBOUNDFINAL}
	\fluidnorm{N}^2(t) & < \epsilon^2, && t \in [1,T).
\end{align}
Note that \eqref{E:GLOBALBOUNDFINAL} is a strict improvement over the bootstrap assumption \eqref{E:FLUIDNORMNPROOFBA}. Thus, using Proposition \ref{P:CONTINUATION}, Sobolev embedding, and the continuity of $\fluidnorm{N}(t),$ it follows that $T = \infty$ and that \eqref{E:GLOBALBOUNDFINAL} holds for $t \in [1,\infty).$ 

The case $\speed^2 = 0$ can be handled similarly using inequalities \eqref{E:ENINTEGRALDUSTVELOCITY} - \eqref{E:ENINTEGRALDUSTDENSITY}.

\end{proof}

\section{The Sharpness of the Hypotheses for the Radiation Equation of State} \label{S:PURERADIATION}

In this section, we show that if $\speed^2 = 1/3,$ hypothesis \ref{A:A1} from Section \ref{S:INTRO} holds, and
$\int_{s = 1}^{\infty} e^{- \Omega(s)} \, d s = \infty,$ then the background solution $\widetilde{\rho} = \bar{\rho} e^{-4 \Omega},$ $\widetilde{u}^{\mu} = \delta_0^{\mu}$ to the relativistic Euler equations \eqref{E:EULERINTROP} - \eqref{E:EULERINTROU} on the spacetime-with-boundary $([1,\infty) \times \mathbb{R}^3,g)$ is \emph{nonlinearly unstable}. The main idea of the proof is to use the special conformally invariant structure of the fluid equations when $\speed^2 = 1/3$ to reduce the problem to the case in which the spacetime is Minkowskian; we can then quote Christodoulou's results \cite{dC2007} to deduce the instability. We begin by noting a standard result: that the change of time variable

\begin{align} \label{E:CONFORMALTIME}
	\frac{d \tau}{dt}= e^{-\Omega(t)}
\end{align}
allows us to write the metric \eqref{E:METRICFORM} in the following form:

\begin{align} \label{E:CONFORMALMETRICFORM}
	g & = e^{2\Omega(t \circ \tau)} \Big(-d\tau^2 +  \sum_{j=1}^3 (dx^j)^2 \Big) = e^{2\Omega(t \circ \tau)}m,
\end{align}
where $m = -d \tau^2 + \sum_{i=j}^3 (dx^j)^2$ is the Minkowski metric on 
$[1,\infty) \times \mathbb{R}^3$ equipped with standard rectangular coordinates. The following elementary but important observation play a fundamental role in the discussion in this section: if the hypothesis \ref{A:A1} is verified but
$\int_{s = 1}^{\infty} e^{- \Omega(s)} \, d s = \infty,$ then after translating $\tau$ by a constant if necessary, it follows that

\begin{align}
	\tau: [1,\infty) \rightarrow [1,\infty),\qquad t \rightarrow \tau(t)
\end{align}	
is an autodiffeomorphism of $[1,\infty).$

We now prove the conformal invariance of the fluid equations when $\speed^2 = 1/3;$ this is a standard result, and 
we provide the short proof only for the convenience of the reader.

\begin{proposition} [\textbf{Conformal Invariance of the Relativistic Euler Equations When} $\speed^2 = 1/3$] \label{P:CONFORMAL}
	Let $g$ be the conformally flat metric defined in \eqref{E:CONFORMALMETRICFORM}. Then $(\rho,u)$ is a solution 
	to the relativistic Euler equations \eqref{E:EULERINTROP} - \eqref{E:EULERINTROU} corresponding to the metric $g$ if and only 
	if $(\rho',U)$ is a solution to the relativistic Euler equations 
	\eqref{E:EULERINTROP} - \eqref{E:EULERINTROU} corresponding to the Minkowski metric $m.$ Here, the rescaled fluid variables 
	$(\rho',U)$ are defined by
	
	\begin{align}
		\rho' & \eqdef e^{4 \Omega} \rho, \qquad  U \eqdef e^{\Omega} u. \label{E:CONFORMALU}
	\end{align}
\end{proposition}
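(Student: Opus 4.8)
The plan is to verify the claimed equivalence by a direct computation using the conformal transformation laws for the Levi-Civita connection and the energy-momentum tensor, exploiting the fact that $T^{\mu\nu}$ is trace-free precisely when $\speed^2 = 1/3$. First I would record the basic facts: under $g = e^{2\Omega}m$ with $\Omega$ a function of the spacetime point (here only of $\tau$, but that is irrelevant for the structural computation), one has $(g^{-1})^{\mu\nu} = e^{-2\Omega}(m^{-1})^{\mu\nu}$, and the normalization $g_{\alpha\beta}u^\alpha u^\beta = -1$ combined with $U = e^\Omega u$ gives $m_{\alpha\beta}U^\alpha U^\beta = e^{2\Omega}m_{\alpha\beta}u^\alpha u^\beta = g_{\alpha\beta}u^\alpha u^\beta = -1$, so $U$ is correctly normalized with respect to $m$ and remains future-directed. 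Likewise the equation of state is preserved: $p' \eqdef \speed^2\rho' = e^{4\Omega}p$.

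Next I would use the formulation \eqref{E:DIVT0} of the relativistic Euler equations, $D_\alpha T^{\alpha\mu} = 0$, rather than \eqref{E:EULERINTROP}--\eqref{E:EULERINTROU}, since the divergence-form equation transforms most cleanly. Define the rescaled energy-momentum tensor $\mathcal{T}^{\mu\nu} \eqdef (\rho' + p')U^\mu U^\nu + p'(m^{-1})^{\mu\nu}$ and observe, by substituting $\rho' = e^{4\Omega}\rho$, $U^\mu = e^\Omega u^\mu$, $p' = e^{4\Omega}p$, that $\mathcal{T}^{\mu\nu} = e^{6\Omega}T^{\mu\nu}$. The key identity is the relation between the two Levi-Civita connections: in $1+3$ dimensions, writing $\bar{D}$ for the connection of $m$, one has for a symmetric two-tensor
\begin{align}
	D_\alpha T^{\alpha\mu} = \bar{D}_\alpha T^{\alpha\mu} + 6(\partial_\alpha \Omega)T^{\alpha\mu} + (\partial_\alpha \Omega)(g^{-1})^{\mu\beta}g_{\beta\gamma}T^{\alpha\gamma} - (\partial_\beta\Omega)g^{\mu\gamma} \cdots,
\end{align}
but more efficiently I would recall the standard fact that for a trace-free symmetric tensor of the appropriate weight in $n$ spacetime dimensions, $D_\alpha(e^{(n+2)\Omega}T^{\alpha\mu}) $ relates to $\bar D_\alpha$ of the rescaled object with the trace term dropping out; with $n=4$ the weight is $6$, matching $\mathcal{T}^{\mu\nu} = e^{6\Omega}T^{\mu\nu}$. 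Concretely, the difference tensor $D_\alpha - \bar D_\alpha$ acting on a $(2,0)$-tensor introduces terms proportional to $\partial\Omega$ contracted with $T$, and after collecting them the combination that survives is $e^{-6\Omega}\bar D_\alpha \mathcal{T}^{\alpha\mu} + (\text{trace term}) \times \mathrm{tr}_g T$; since $\mathrm{tr}_g T = (g^{-1})_{\mu\nu}T^{\mu\nu} = -\rho + 3p = 0$ when $p = \frac13\rho$, the trace term vanishes identically. Hence $D_\alpha T^{\alpha\mu} = 0$ if and only if $\bar D_\alpha \mathcal{T}^{\alpha\mu} = 0$, which is exactly the statement that $(\rho',U)$ solves the relativistic Euler equations for $m$.

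I would organize the write-up as: (i) state the conformal transformation law for the connection acting on $T^{\alpha\mu}$, citing a standard reference or deriving it in two lines from \eqref{E:CHRISTOFFELDEF}; (ii) compute $\mathrm{tr}_g T = -\rho + 3p$ and note it vanishes iff $\speed^2 = 1/3$; (iii) assemble these to get $D_\alpha T^{\alpha\mu} = e^{-6\Omega}\bar D_\alpha \mathcal{T}^{\alpha\mu}$; (iv) check the normalization and future-directedness of $U$ and the preservation of the equation of state, so that $\mathcal{T}$ genuinely is a perfect-fluid energy-momentum tensor with sound speed $\speed^2 = 1/3$; (v) invoke the equivalence between \eqref{E:DIVT0} and \eqref{E:EULERINTROP}--\eqref{E:EULERINTROU} (valid when $\rho, \rho' > 0$, which holds since $e^{4\Omega} > 0$) to transfer the conclusion to the form of the equations used elsewhere. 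The main obstacle — really the only nontrivial point — is getting the conformal weight bookkeeping exactly right so that the coefficient of the $\mathrm{tr}_g T$ term is clean and every $\partial\Omega$ term cancels; once the weight $n+2 = 6$ is identified this is a short and standard calculation, which is why the statement flags the proof as routine and provided "only for the convenience of the reader."
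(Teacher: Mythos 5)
Your proposal is correct and follows essentially the same route as the paper: rescale by $e^{6\Omega}$ to obtain $T_{(m)}^{\mu\nu}=e^{6\Omega}T^{\mu\nu}$ (which the paper likewise verifies equals the Minkowski perfect-fluid energy-momentum tensor built from $(\rho',U)$), invoke the conformal divergence identity $D_\alpha T^{\alpha\mu}=e^{-6\Omega}\nabla_\alpha(e^{6\Omega}T^{\alpha\mu})$ whose validity rests on $g_{\alpha\beta}T^{\alpha\beta}=0$ when $p=\tfrac13\rho$, check that $m_{\alpha\beta}U^\alpha U^\beta=-1$, and pass back to the form \eqref{E:EULERINTROP}--\eqref{E:EULERINTROU}. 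The only cosmetic issue is the garbled intermediate display for the connection-difference terms, but you correctly identify the clean statement (weight $n+2=6$, trace term drops) and the paper similarly omits that short computation.
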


\begin{proof}
Throughout this proof, $\nabla$ denotes the Levi-Civita connection corresponding to the Minkowski metric $m$ 
and $D$ denotes the Levi-Civita connection corresponding to $g = e^{2 \Omega} m.$ As discussed in the beginning of the article, the relativistic Euler equations \eqref{E:EULERINTROP} - \eqref{E:EULERINTROU} are equivalent to the equations $D_{\alpha} T^{\alpha \mu} = 0$ plus the normalization condition $g_{\alpha \beta} u^{\alpha} u^{\beta} = - 1$ (see \eqref{E:DIVT0} and \eqref{E:UNORMALIZED}). In the case $p = 1/3 \speed^2 \rho,$ we note that

\begin{align}
	T^{\mu \nu} & = \frac{4}{3} \rho u^{\mu} u^{\nu} + \frac{1}{3}\rho (g^{-1})^{\mu \nu} = \frac{4}{3}\rho u^{\mu} u^{\nu} 
	+ \frac{1}{3} \rho e^{-2 \Omega}( m^{-1})^{\mu \nu}, 
\end{align}
and we define the following rescaled ``Minkowskian'' energy-momentum tensor:

\begin{align} 
	T_{(m)}^{\mu \nu} & \eqdef e^{6 \Omega} T^{\mu \nu} = \frac{4}{3} \rho' U^{\mu} U^{\nu} + \frac{1}{3} \rho'(m^{-1})^{\mu \nu},
\end{align}
where $\rho'$ and $U^{\mu}$ are defined in \eqref{E:CONFORMALU}. We also note that by
\eqref{E:CONFORMALMETRICFORM},

\begin{align}
	m_{\alpha \beta} U^{\alpha} U^{\beta} = - 1.
\end{align}

The key step is the following identity, whose simple proof we omit:

\begin{align} \label{E:DIVDTINTERMSOFDIVNABLAT}
	D_{\alpha} T^{\alpha \mu} & = e^{-6 \Omega} \nabla_{\alpha} (e^{6 \Omega} T^{\alpha \mu}) = 
		e^{-6 \Omega} \nabla_{\alpha} T_{(m)}^{\alpha \mu}.
\end{align}
We remark that the proof of the identity \eqref{E:DIVDTINTERMSOFDIVNABLAT} heavily leans on the fact that 
$g_{\alpha \beta} T^{\alpha \beta} = 0$ for the equation of state $p = (1/3) \rho.$ It now follows from \eqref{E:DIVDTINTERMSOFDIVNABLAT} that

\begin{align}
	D_{\alpha} T^{\alpha \mu} \iff \nabla_{\alpha} T_{(m)}^{\alpha \mu} = 0.
\end{align}
According to the remarks made at the beginning of the proof, this completes the proof of the proposition.

\end{proof}

In the next theorem, we will recall some important aspects of Christodoulou's shock formation result \cite{dC2007}. To this end, we need to introduce some notation. In order to emphasize the connections between Proposition \ref{P:CONFORMAL}, Theorem \ref{T:CHRISTODOULOUSHOCK}, and Corollary \ref{C:SHOCKSCANFORM}, we will denote the energy density by $\rho',$ the four velocity by $U,$ and the spacetime coordinates by $(\tau,x^1,x^2,x^3).$ Let $(\mathring{\rho}',\mathring{U}^1,\mathring{U}^2,\mathring{U}^3)$ be initial data (at time $\tau = 1$) for the relativistic Euler equations \eqref{E:EULERINTROP} - \eqref{E:EULERINTROU} on the spacetime-with-boundary $\big([1,\infty) \times \mathbb{R}^3, m \big),$ where $m = -d\tau^2 +  \sum_{j=1}^3 (dx^j)^2$ is the standard Minkowski metric on $\mathbb{R}^4.$ Let $\bar{\rho} > 0$ be a constant background density. Let $B_r \subset \mathbb{R}^3 $ denote a solid ball of radius $\frac{2}{3} \leq r < 1$ centered at the origin in the Cauchy hypersurface $\lbrace (\tau,x^1,x^2,x^3) \ | \ \tau = 1 \rbrace \simeq \mathbb{R}^3$ (embedded in Minkowski spacetime) and let $\partial B_r$ denote its boundary. We define the 
order-$1$ Sobolev norm $\mathcal{S}_{B_1 \backslash B_r} \geq 0$ of the data over the annular region in between $B_r$ and the unit ball as follows:
	
	\begin{align} \label{E:ANNULARNORM}
		\mathcal{S}_{B_1 \backslash B_r} \eqdef \|\mathring{\rho}' - \bar{\rho} \|_{H^1(B_1 \backslash B_r)}
		+ \sum_{j = 1}^3 \| \mathring{U}^j \|_{H^1(B_1 \backslash B_r)}. 
	\end{align}
	We define the standard Sobolev norm of the data as follows:
	
	\begin{align}
		\mathcal{D}_M & \eqdef 
	  	\| \mathring{\rho}' - \bar{\rho} \|_{H^M}
	  	+ \sum_{j=1}^3 \| \mathring{U}^j \|_{H^M}. \label{E:STANDARDNORM}	
	\end{align}
	We also define the following surface + annular integrals of the data:
	
	\begin{align}
		Q(r) & \eqdef \int_{\partial B_r} r \Big\lbrace (\mathring{\rho}' - \bar{\rho}) + \frac{4}{\sqrt{3}} \bar{\rho} \mathring{U}^i \hat{N}_i 	
			\Big\rbrace \, d \sigma  
		+ \int_{(B_1 \backslash B_r)} 2(\mathring{\rho}' - \bar{\rho}) + \frac{4}{\sqrt{3}} \bar{\rho} \mathring{U}^i \hat{N}_i \, 
		d^3 x. 
	\end{align}
	Above, $r \eqdef \sqrt{\sum_{j=1}^3 (x^j)^2}$ denotes the standard radial coordinate on $\mathbb{R}^3,$ and
	$\hat{N}$ denotes the outward unit normal to $\partial B_r.$

\begin{theorem} [\cite{dC2007} \textbf{Theorem 14.2}, pg. 925] \label{T:CHRISTODOULOUSHOCK}
	Consider the relativistic Euler equations \eqref{E:EULERINTROP} - \eqref{E:EULERINTROU} on the manifold-with-boundary	
	$[1,\infty) \times \mathbb{R}^3$ equipped with the Minkowski metric and rectangular coordinates
	$(\tau,x^1,x^2,x^3):$ $m = - d \tau^2 + \sum_{j=1}^3 (dx^j)^2.$ Let $\bar{\rho} > 0$ be a constant background density.
	Then the explicit solution $\widetilde{\rho}' = \bar{\rho},
	(\widetilde{U}^{\tau},\widetilde{U}^1,\widetilde{U}^2,\widetilde{U}^3) = (1,0,0,0)$ 
	is unstable. More specifically, there exists an open 
	family of arbitrarily small (non-zero) smooth perturbations of the explicit solution's data which 
	launch solutions that form shocks in finite time. Here, $\widetilde{U}^{\tau}$ denotes the
	time component of $\widetilde{U}$ relative to the coordinate system $(\tau,x^1,x^2,x^3).$
	
	Even more specifically, there exists a large integer $M,$ a large constant $C > 0,$ and a small constant $\epsilon > 0$ such 
	that the following three conditions on the initial data together guarantee finite-time shock formation
	in the corresponding solution:
	
	\begin{subequations}
	\begin{align}
		\mathcal{D}_M & \leq \epsilon, \label{E:SMALLTOPNORM} \\
		Q(r) & \geq C \sqrt{\mathcal{D}_M} \big\lbrace \sqrt{\mathcal{D}_M} + \sqrt{1 - r} \big\rbrace 
			\mathcal{S}_{(B_1 \backslash B_r)}, \label{E:SIGNEDINTEGRALS} \\
		\frac{2}{3} & \leq r < 1. \label{E:RBOUND}
	\end{align}
	\end{subequations}
	
	Furthermore, there exists a uniform constant $C'>0$ such that the above three conditions guarantee that 
	a shock forms in the solution before the time
	
	\begin{align}
		\tau_{max}(r) = \exp \left(\frac{C'(1 - r)}{Q(r)} \right). 
	\end{align}
\end{theorem}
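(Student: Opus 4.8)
This statement is a verbatim restatement of \cite[Theorem 14.2, pg. 925]{dC2007}, so the proof I would give is simply to invoke that result; for orientation I sketch the structure of Christodoulou's argument and indicate where the hypotheses \eqref{E:SMALLTOPNORM}--\eqref{E:RBOUND} enter. The plan rests on the variational structure of the relativistic Euler equations: under a general equation of state they are the Euler--Lagrange equations of a Lagrangian expressed through the fluid potential/enthalpy, and the flow is governed (modulo the transport of the vorticity along $u$) by a quasilinear wave equation with respect to the \emph{acoustical metric}, whose null cones are the sound cones of the background. The first step is to build, via the associated \emph{eikonal equation}, an acoustical function whose level sets $C_u$ are outgoing characteristic (sound) hypersurfaces, and to introduce the \emph{inverse foliation density} $\mu$, which quantifies how tightly the $C_u$ are packed. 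Shock formation is exactly the event $\mu \to 0^+$ occurring while the fluid variables and their appropriately rescaled first derivatives stay bounded.

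The second step is to derive the transport equation satisfied by $\mu$ along the characteristics and extract from it the monotonicity that forces $\mu$ down to zero. This is where the signed surface-plus-annulus integral $Q(r)$ does its work: $Q(r)$ is, up to harmless factors, the initial size of the component of the characteristic deformation that drives $\mu$ to $0$, and hypothesis \eqref{E:SIGNEDINTEGRALS} guarantees that this driving term dominates the error contributions, which by finite propagation speed are controlled by the annular norm $\mathcal{S}_{(B_1 \backslash B_r)}$; meanwhile the smallness \eqref{E:SMALLTOPNORM} of the high-order norm $\mathcal{D}_M$ and the restriction \eqref{E:RBOUND} on $r$ are what make the bootstrap for all lower-order geometric and fluid quantities close. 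Integrating the resulting ODE-type inequality for $\mu$ along a characteristic emanating from $\partial B_r$ produces the explicit shock-time bound $\tau_{max}(r) = \exp\big(C'(1-r)/Q(r)\big)$.

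The genuinely difficult part --- and the reason the complete argument fills a monograph rather than a section --- is the uniform control of the acoustical geometry up to the first shock: one must prove that the acoustical function stays smooth, that $\mu$ remains bounded above and strictly positive until the shock, that the null frame and connection coefficients obey sharp estimates, and that the top-order energy estimates, which are carried out relative to the acoustical metric with weights in powers of $\mu$, close in spite of this degeneration. For the present paper none of this needs to be reproved, so I would simply cite \cite{dC2007}. I would also remark that for the radiation equation of state $\speed^2 = 1/3$ the unstable data furnished by that theorem may be taken irrotational --- indeed smooth and compactly supported perturbations of the constant state --- which is precisely what is needed to transfer the conclusion back, via Proposition \ref{P:CONFORMAL}, to the expanding spacetime in Corollary \ref{C:SHOCKSCANFORM}.
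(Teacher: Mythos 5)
Your proposal is correct and matches the paper's treatment exactly: the theorem is an external result quoted from \cite{dC2007}, and the paper, like you, simply cites it without reproving it. Your orientation sketch of Christodoulou's acoustical-geometry argument is accurate and a helpful supplement, but nothing beyond the citation is needed.
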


\begin{remark}
	Note that if we are given any data such that $Q(r) > 0,$ then if we rescale its amplitude (more precisely, the amplitude
	of its deviation from the background constant solution) by a sufficiently small constant 
	factor, then \eqref{E:SMALLTOPNORM} and \eqref{E:SIGNEDINTEGRALS} are both verified by the rescaled data.
	This follows from the fact that for a fixed $r,$ $\mathcal{D}_M,$ $\mathcal{S}_{(B_1 \backslash B_r)},$
	and $Q(r)$ shrink linearly with the scaling factor, while the right-hand side of \eqref{E:SIGNEDINTEGRALS} shrinks 
	like the scaling factor to the power $3/2.$
\end{remark}

The following corollary follows easily from \eqref{E:CONFORMALTIME} - \eqref{E:CONFORMALMETRICFORM}, Proposition \ref{P:CONFORMAL}, and Theorem \ref{T:CHRISTODOULOUSHOCK}.

\begin{corollary} [\textbf{Nonlinear Instability of the Explicit Solutions When $\speed^2= 1/3$}] \label{C:SHOCKSCANFORM}
	Assume that $\speed^2 = 1/3.$ Consider the relativistic Euler equations \eqref{E:EULERINTROP} - \eqref{E:EULERINTROU} on the 
	manifold-with-boundary	$[1,\infty) \times \mathbb{R}^3$ equipped with a Lorentzian metric $g = -dt^2 + e^{2\Omega(t)} 
	\sum_{j=1}^3 (dx^j)^2,$ $\Omega(1) = 0$ (as in \eqref{E:METRICFORM}). 
	Assume that hypothesis \ref{A:A1} of Section \ref{S:INTRO} holds, and that $\int_{s = 1}^{\infty} e^{- \Omega(s)} \, d s = 
	\infty.$ Then the explicit solution \eqref{E:BACKGROUNDU} is unstable. More specifically, there exist 
	arbitrarily small (non-zero) perturbations of the explicit solution's initial data which launch perturbed solutions that form 
	shocks in finite time. More precisely, if the open conditions \eqref{E:SMALLTOPNORM} - \eqref{E:RBOUND} are verified by the 
	data, then a shock will form before the conformal $\tau-$coordinate time
	
	\begin{align} \label{E:SHOCKTAU}
		\tau_{max}(r) = \exp \left(\frac{C'(1 - r)}{Q(r)} \right). 
	\end{align}
	In the original time coordinate $t,$ the shock will form before $t_{max}(r),$ which is implicitly determined
	in terms of $\tau_{max}(r)$ via the relation
	
	\begin{align}
		\tau_{max}(r) = \int_1^{t_{max}(r)} e^{- \Omega(s)} \, ds.
	\end{align}	
	
\end{corollary}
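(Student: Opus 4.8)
The plan is to reduce the claimed instability entirely to Christodoulou's Minkowskian shock-formation theorem (Theorem \ref{T:CHRISTODOULOUSHOCK}) by simultaneously exploiting the conformal invariance of the relativistic Euler equations when $\speed^2 = 1/3$ (Proposition \ref{P:CONFORMAL}) and the time reparametrization \eqref{E:CONFORMALTIME}. First I would fix initial data $(\mathring{\rho},\mathring{u}^j)$ on $\lbrace t = 1 \rbrace$ that is a smooth, nonzero perturbation of the background data from \eqref{E:BACKGROUNDU} (which, since $\Omega(1) = 0$, is just the constant state $\mathring{\rho} = \bar{\rho}$, $\mathring{u}^\mu = \delta_0^\mu$), and pass to the rescaled fluid variables $(\rho',U) = (e^{4\Omega}\rho, e^{\Omega}u)$ of \eqref{E:CONFORMALU} together with the new time coordinate $\tau$ determined by \eqref{E:CONFORMALTIME} with $\tau(1) = 1$. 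Two facts make this clean: (i) because $\Omega(1) = 0$, the map $(\rho,u)\mapsto(\rho',U)$ is the identity on the slice $\lbrace t = 1 \rbrace$, so the data is literally unchanged and its ``Christodoulou quantities'' $\mathcal{D}_M$, $\mathcal{S}_{B_1\backslash B_r}$, $Q(r)$ are the same computed for $(\mathring{\rho},\mathring{u})$ or for $(\mathring{\rho}',\mathring{U})$; (ii) under hypothesis \ref{A:A1} together with $\int_1^\infty e^{-\Omega(s)}\,ds = \infty$, the excerpt already records that $\tau\colon[1,\infty)\to[1,\infty)$ is an orientation-preserving $C^1$ autodiffeomorphism. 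Then \eqref{E:CONFORMALMETRICFORM} gives $g = e^{2\Omega}m$ with $m = -d\tau^2 + \sum_j(dx^j)^2$ on $[1,\infty)_\tau\times\mathbb{R}^3$, and Proposition \ref{P:CONFORMAL} shows that $(\rho,u)$ solves \eqref{E:EULERINTROP}--\eqref{E:EULERINTROU} for $g$ on a $t$-slab if and only if $(\rho',U)$ solves them for $m$ on the corresponding $\tau$-slab; in particular the background $(\bar{\rho}e^{-4\Omega},\delta_0^\mu)$ corresponds to the Minkowskian constant state $(\bar{\rho},\delta_0^\mu)$.

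Next I would invoke Theorem \ref{T:CHRISTODOULOUSHOCK}: the open conditions \eqref{E:SMALLTOPNORM}--\eqref{E:RBOUND} cut out a nonempty open family of arbitrarily small smooth perturbations of the constant state in Minkowski whose solutions form a shock by the $\tau$-time $\tau_{max}(r) = \exp\!\left(C'(1-r)/Q(r)\right) < \infty$. Pulling this family back through the (fixed, identity-at-$\tau{=}1$) identification of initial data, and through the diffeomorphism $\tau\leftrightarrow t$ and the rescaling \eqref{E:CONFORMALU} — whose factors $e^{\Omega},e^{4\Omega}$ are smooth, strictly positive, and, on every compact $t$-interval, bounded with bounded reciprocal — yields an open family of arbitrarily small smooth perturbations of \eqref{E:BACKGROUNDU} whose corresponding solutions for the metric $g$ cease to be classical at a finite $t$-time. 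That finite time $t_{max}(r)$ is obtained by solving $\tau_{max}(r) = \int_1^{t_{max}(r)} e^{-\Omega(s)}\,ds$; this has a unique finite solution precisely because the right-hand side, viewed as a function of its upper limit, increases continuously from $0$ and, by non-integrability of $e^{-\Omega}$, sweeps out all of $[1,\infty)$. Openness and the ``arbitrarily small'' clause survive the change of variables because the maps on data space are smooth with smooth inverses, and because at $t = \tau = 1$ the transformation is the identity.

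The main technical point to pin down is the equivalence, on the two sides of the conformal rescaling, of the notion of ``shock.'' In Christodoulou's framework a shock is the finite-$\tau$-time breakdown of the classical solution detected by blow-up of first derivatives of $(\rho',U)$ (equivalently, degeneration of the inverse foliation density) while the solution remains smooth elsewhere; one must verify that this forces a genuine breakdown of the classical solution $(\rho,u)$ for $g$ and is neither created nor removed by the conformal factor. This is where I would be most careful, but it should be routine: on any compact $t$-interval the factors $e^{\Omega}$ and their reciprocals are $C^1$ and bounded, so $\underpartial(\rho,u)$ is controlled by $\underpartial(\rho',U)$ and conversely; hence the continuation criterion of Proposition \ref{P:CONTINUATION} forces the maximal $t$-time of classical existence to be finite, which is exactly the instability asserted. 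Finally, I would record the shock-time bound by rewriting \eqref{E:SHOCKTAU} and the relation $\tau_{max}(r) = \int_1^{t_{max}(r)} e^{-\Omega(s)}\,ds$, completing the proof of Corollary \ref{C:SHOCKSCANFORM}.
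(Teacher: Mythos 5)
Your proposal is correct and follows essentially the same route as the paper, which dispatches the corollary in a single sentence by combining the conformal-time change \eqref{E:CONFORMALTIME}--\eqref{E:CONFORMALMETRICFORM}, the conformal invariance Proposition \ref{P:CONFORMAL}, and Christodoulou's Theorem \ref{T:CHRISTODOULOUSHOCK}. The additional care you take (the observation that $\Omega(1)=0$ makes the rescaling the identity on the initial slice, the check that the $\tau\leftrightarrow t$ map is an autodiffeomorphism under the non-integrability hypothesis, and the verification that the conformal factor cannot create or remove a shock on compact $t$-intervals) is exactly the content the paper elides with ``follows easily.''
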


\begin{remark} \label{R:UNSTABLEPROOF}
	Unlike the proof of Theorem \ref{T:GLOBALEXISTENCE}, the proof of Corollary \ref{C:SHOCKSCANFORM} is not 
	easily seen to be stable under small perturbations of the metric $g.$ In order to show that the proof is stable,
	one would need to show that Christodoulou's proof of Theorem \ref{T:CHRISTODOULOUSHOCK} is stable under small
	perturbations of the Minkowski metric.
\end{remark}

\section*{Acknowledgments}

This research was supported in part by a Solomon Buchsbaum grant administered by the Massachusetts Institute of Technology, and
in part by an NSF All-Institutes Postdoctoral Fellowship administered by the Mathematical Sciences Research Institute through its core grant DMS-0441170. I would like to thank John Barrow for offering his many insights, which propelled me to closely investigate the cases $\speed^2 = 0$ and $\speed^2 = 1/3.$

\begin{center}
	\textbf{\huge{Appendices}}
\end{center}
\setcounter{section}{0}
   \setcounter{subsection}{0}
   \setcounter{subsubsection}{0}
   \setcounter{paragraph}{0}
   \setcounter{subparagraph}{0}
   \setcounter{figure}{0}
   \setcounter{table}{0}
   \setcounter{theorem}{0}
   \setcounter{definition}{0}
   \setcounter{remark}{0}
   \setcounter{proposition}{0}
   \renewcommand{\thesection}{\Alph{section}}
   \renewcommand{\theequation}{\Alph{section}.\arabic{equation}}
   \renewcommand{\theproposition}{\Alph{section}-\arabic{proposition}}
   \renewcommand{\thecorollary}{\Alph{section}.\arabic{corollary}}
   \renewcommand{\thedefinition}{\Alph{section}.\arabic{definition}}
   \renewcommand{\thetheorem}{\Alph{section}.\arabic{theorem}}
   \renewcommand{\theremark}{\Alph{section}.\arabic{remark}}
   \renewcommand{\thelemma}{\Alph{section}-\arabic{lemma}}

\section{Sobolev-Moser Inequalities} \label{A:SobolevMoser}
		In this Appendix, we provide some Sobolev-Moser estimates. The propositions and corollaries stated below
		are standard results that can be proved using methods similar to those used in \cite[Chapter 6]{lH1997} and in 
		the Appendix of \cite{sKaM1981}. Throughout this Appendix, $L^p = L^p(\mathbb{R}^3)$ and 
		$H^M=H^M(\mathbb{R}^3).$
	
\begin{proposition} \label{P:derivativesofF1FkL2}
	Let $M \geq 0$ be an integer. If $\lbrace v_a \rbrace_{1 \leq a \leq l}$ are functions such that $v_a \in
    L^{\infty}, \|\underpartial^{(M)} v_a \|_{L^2} < \infty$ for $1 \leq a \leq l,$ and
	$\vec{\alpha}_1, \cdots, \vec{\alpha}_l$ are spatial derivative multi-indices with 
	$|\vec{\alpha}_1| + \cdots + |\vec{\alpha}_l| = M,$ then
	
	\begin{align}
		\| (\partial_{\vec{\alpha}_1}v_1) (\partial_{\vec{\alpha}_2}v_2) \cdots (\partial_{\vec{\alpha}_l}v_l)\|_{L^2}
		& \leq C(l,M) \sum_{a=1}^l \Big( \| \underpartial^{(M)} v_a  \|_{L^2} \prod_{b \neq a} \|v_{b} \|_{L^{\infty}} \Big).
	\end{align}
\end{proposition}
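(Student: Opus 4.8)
The plan is to reduce the general product estimate to the classical single-function Gagliardo--Nirenberg interpolation inequalities by an induction on $M$ (equivalently, on the number of ``spread-out'' derivatives). First I would recall the basic interpolation fact: for a function $v$ with $v \in L^\infty$ and $\underpartial^{(M)} v \in L^2$, and for any multi-index $\vec{\beta}$ with $|\vec{\beta}| = k \leq M$, one has the estimate
\begin{align} \label{E:GNINEQ}
	\| \partial_{\vec{\beta}} v \|_{L^{2M/k}} \leq C(M) \| \underpartial^{(M)} v \|_{L^2}^{k/M} \| v \|_{L^\infty}^{1 - k/M}.
\end{align}
This is the standard Gagliardo--Nirenberg inequality on $\mathbb{R}^3$ (the dimension plays no essential role here since the exponents are dimension-independent in this particular scaling); it can be proved by the methods of \cite[Chapter 6]{lH1997} or the Appendix of \cite{sKaM1981}, exactly as indicated in the preamble to this Appendix.

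The second step is to combine the factors via H\"older's inequality. Set $k_a \eqdef |\vec{\alpha}_a|$, so that $k_1 + \cdots + k_l = M$. For each index $a$ with $k_a \geq 1$, apply \eqref{E:GNINEQ} to bound $\| \partial_{\vec{\alpha}_a} v_a \|_{L^{2M/k_a}}$; for each index $a$ with $k_a = 0$, simply use $\| v_a \|_{L^\infty}$. Since $\sum_{a : k_a \geq 1} \frac{k_a}{2M} = \frac{1}{2}$ (the zero-order factors contribute nothing to the sum of reciprocal exponents and are absorbed into $L^\infty$), H\"older's inequality gives
\begin{align}
	\| (\partial_{\vec{\alpha}_1} v_1) \cdots (\partial_{\vec{\alpha}_l} v_l) \|_{L^2}
	\leq C(l,M) \prod_{a : k_a \geq 1} \| \partial_{\vec{\alpha}_a} v_a \|_{L^{2M/k_a}} \prod_{a : k_a = 0} \| v_a \|_{L^\infty}.
\end{align}
Substituting \eqref{E:GNINEQ} into the right-hand side yields a bound by $C(l,M) \prod_{a=1}^l \| \underpartial^{(M)} v_a \|_{L^2}^{k_a/M} \| v_a \|_{L^\infty}^{1 - k_a/M}$, where for $k_a = 0$ the first factor is interpreted as $1$ and the second as $\| v_a \|_{L^\infty}$.

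The final step converts this product of fractional powers into the desired sum. I would apply the weighted arithmetic--geometric mean inequality (Young's inequality with the weights $k_a/M$, which sum to $1$): a product $\prod_a X_a^{k_a/M}$ with $X_a \eqdef \| \underpartial^{(M)} v_a \|_{L^2} \prod_{b \neq a} \| v_b \|_{L^\infty}$ is bounded by $\sum_a \frac{k_a}{M} X_a \leq \sum_a X_a$, after checking that $X_a^{k_a/M} \cdot \prod_{b} \|v_b\|_{L^\infty}^{\text{(remaining powers)}}$ collapses exactly to $\prod_a (\|\underpartial^{(M)} v_a\|_{L^2}^{k_a/M} \|v_a\|_{L^\infty}^{1-k_a/M})$ — a short bookkeeping computation with the $L^\infty$ exponents, which I would do carefully but not belabor here. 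This gives the stated inequality. The only mild subtlety, and the step I expect to require the most care, is the bookkeeping in matching the $L^\infty$ exponents: one must verify that the product of all the $\|v_b\|_{L^\infty}$ powers coming out of \eqref{E:GNINEQ} for every factor reassembles correctly into $\prod_{b \neq a} \|v_b\|_{L^\infty}$ inside each term $X_a$. The borderline cases $k_a = 0$ (some $v_a$ undifferentiated) and $k_a = M$ (all derivatives on one $v_a$, so \eqref{E:GNINEQ} degenerates to the trivial statement $\|\underpartial^{(M)} v_a\|_{L^2} \leq \|\underpartial^{(M)} v_a\|_{L^2}$) should be noted explicitly, but they are consistent with the general formula and cause no real difficulty.
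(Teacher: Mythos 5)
Your argument is correct and is exactly the standard Gagliardo--Nirenberg $+$ H\"older $+$ weighted AM--GM route that the references cited in the preamble (Klainerman--Majda, H\"ormander) use for this family of Sobolev--Moser product estimates; the paper itself gives no proof but simply defers to those sources, so your approach coincides with the intended one. The exponent bookkeeping you flagged does close ($\prod_a X_a^{k_a/M}$ collapses to $\prod_a \|\underpartial^{(M)}v_a\|_{L^2}^{k_a/M}\|v_a\|_{L^\infty}^{1-k_a/M}$ because each $Z_b$ picks up total power $\sum_{a\neq b}k_a/M = 1-k_b/M$), and the only edge case not covered by the division by $M$ is $M=0$, which is a one-line H\"older estimate.
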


\begin{proposition}                                                  \label{P:DIFFERENTIATEDSOBOLEVCOMPOSITION}
    Let $M \geq 1$ be an integer, let $\mathfrak{K}$ be a compact set, and let $F \in C_b^M(\mathfrak{K})$ be a 
    function. Assume that $v$ is a function such that $v(\mathbb{R}^3) \subset \mathfrak{K}$ and $ \underpartial v \in H^{M-1}.$
    Then $\underpartial (F \circ v) \in H^{M-1},$ and
    
    \begin{align} 														\label{E:DifferentiatedModifiedSobolevEstimate}
    	\| \underpartial (F \circ v) \|_{H^{M-1}} 
    		& \leq C(M) \| \underpartial v \|_{H^{M-1}} \sum_{l=1}^M |F^{(l)}|_{\mathfrak{K}} 
    		\| v \|_{L^{\infty}}^{l - 1}.
    \end{align}
\end{proposition}

\begin{proposition}                   \label{P:SobolevTaylor}
     Let $M \geq 1$ be an integer, let $\mathfrak{K}$ be a compact, convex set, and let $F \in C_b^M(\mathfrak{K})$ be a 
     function. Assume that $v$ is a function such that $v(\mathbb{R}^3) \subset \mathfrak{K}$ and $v - \bar{v} \in H^M,$
    where $\bar{v} \in \mathfrak{K}$ is a constant. Then $F \circ v - F \circ \bar{v} \in H^M,$ and
    
    \begin{align} 														\label{E:ModifiedSobolevEstimateConstantArray}
    	\|F \circ v - F \circ \bar{v} \|_{H^M} 
    		\leq C(M) \Big\lbrace |F^{(1)}|_{\mathfrak{K}}\| v - \bar{v} \|_{L^2} 
    		+ \| \underpartial v \|_{H^{M-1}} \sum_{l=1}^M  |F^{(l)}|_{\mathfrak{K}} 
    		\| v \|_{L^{\infty}}^{l - 1} \Big\rbrace.
    \end{align}
\end{proposition}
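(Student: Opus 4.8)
The plan is to estimate the $H^M$ norm of $F \circ v - F \circ \bar{v}$ by splitting it as
\[
	\|F \circ v - F \circ \bar{v}\|_{H^M} \lesssim \|F \circ v - F \circ \bar{v}\|_{L^2} + \|\underpartial(F \circ v)\|_{H^{M-1}},
\]
which is legitimate because spatial derivatives of order $\geq 1$ annihilate the constant $F \circ \bar{v}$, so that $\underpartial^{(k)}(F \circ v - F \circ \bar v) = \underpartial^{(k)}(F \circ v)$ for $1 \leq k \leq M$. I would then bound the two pieces separately.

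For the zeroth-order piece, I would use the convexity of $\mathfrak{K}$: since $v(\mathbb{R}^3) \subset \mathfrak{K}$, the segment joining $\bar v$ to $v(x)$ lies in $\mathfrak{K}$, so the fundamental theorem of calculus gives the representation $F(v(x)) - F(\bar v) = \big(\int_0^1 F'(\bar v + s(v(x) - \bar v))\, ds\big)(v(x) - \bar v)$, hence the pointwise bound $|F(v(x)) - F(\bar v)| \leq |F^{(1)}|_{\mathfrak{K}}\,|v(x) - \bar v|$. Taking $L^2$ norms yields $\|F \circ v - F \circ \bar v\|_{L^2} \leq |F^{(1)}|_{\mathfrak{K}}\,\|v - \bar v\|_{L^2}$; in particular this piece is finite.

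For the higher-order piece, I would invoke Proposition \ref{P:DIFFERENTIATEDSOBOLEVCOMPOSITION} verbatim: since $F \in C_b^M(\mathfrak{K})$, $v(\mathbb{R}^3) \subset \mathfrak{K}$, and $\underpartial v \in H^{M-1}$ by hypothesis, it gives $\underpartial(F \circ v) \in H^{M-1}$ together with
\[
	\|\underpartial(F \circ v)\|_{H^{M-1}} \leq C(M)\, \|\underpartial v\|_{H^{M-1}} \sum_{l=1}^M |F^{(l)}|_{\mathfrak{K}}\, \|v\|_{L^{\infty}}^{l-1}.
\]
Combining this with the $L^2$ bound from the previous step shows both that $F \circ v - F \circ \bar v \in H^M$ and, after adding the two estimates and absorbing numerical constants, the asserted inequality.

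I do not expect any genuine obstacle here: the nonlinear bookkeeping (the chain-rule/Fa\`a di Bruno combinatorics underlying the product estimates) is already packaged inside Propositions \ref{P:DIFFERENTIATEDSOBOLEVCOMPOSITION} and \ref{P:derivativesofF1FkL2}. The only mild point is that the zeroth-order term is not covered by those propositions and must be handled by the convexity/mean-value argument above, which is precisely why the hypothesis that $\mathfrak{K}$ is convex (rather than merely compact) is needed.
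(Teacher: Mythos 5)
Your proposal is correct. The paper does not actually supply a proof of this proposition — it remarks only that the appendix results are standard and can be proved by the methods of H\"ormander and Klainerman--Majda — but the argument you give is precisely the canonical one those references suggest: split off the zeroth-order $L^2$ piece (where $\underpartial$ annihilates the constant $F\circ\bar v$), handle it with the fundamental-theorem-of-calculus/mean-value bound along the segment from $\bar v$ to $v(x)$ (this is exactly where convexity of $\mathfrak{K}$ enters, as you note), and invoke Proposition \ref{P:DIFFERENTIATEDSOBOLEVCOMPOSITION} for $\|\underpartial(F\circ v)\|_{H^{M-1}}$, which applies since $\underpartial v = \underpartial(v-\bar v)\in H^{M-1}$. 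No gaps.
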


\begin{proposition} \label{P:F1FKLINFINITYHN}
	Let $M \geq 1, l \geq 2$ be integers. Suppose that $\lbrace v_a \rbrace_{1 \leq a \leq l}$ are functions such that $v_a \in
    L^{\infty}$ for $1 \leq a \leq l,$ that $v_l \in H^M,$ and that
	$\underpartial v_a \in H^{M-1}$ for $1 \leq a \leq l - 1.$
	Then
	
	\begin{align}
		\| v_1 v_2 \cdots v_l \|_{H^M} \leq C(l,M) \Big\lbrace \| v_l \|_{H^M} \prod_{a=1}^{l-1} \| v_a \|_{L^{\infty}}  
		+ \sum_{a=1}^{l-1} \| \underpartial v_a \|_{H^{M-1}} \prod_{b \neq a} \| v_b \|_{L^{\infty}} \Big\rbrace.
	\end{align}	
\end{proposition}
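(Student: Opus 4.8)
The plan is to reduce everything to the generalized Gagliardo--Nirenberg estimate of Proposition \ref{P:derivativesofF1FkL2} via the multi-index Leibniz rule, handling all $l \geq 2$ at once rather than inducting on $l.$ (An induction on $l$ starting from $l=2$ is also possible, but it forces one to separately prove a variant of the proposition in which one derivative already sits on one factor; the direct route is cleaner.) First I would fix a spatial multi-index $\vec{\alpha}$ with $|\vec{\alpha}| = m \leq M$ and expand
\begin{align*}
	\partial_{\vec{\alpha}}(v_1 v_2 \cdots v_l)
	= \sum_{\vec{\alpha}_1 + \cdots + \vec{\alpha}_l = \vec{\alpha}}
	c_{\vec{\alpha}_1, \dots, \vec{\alpha}_l}\, (\partial_{\vec{\alpha}_1} v_1)(\partial_{\vec{\alpha}_2} v_2) \cdots (\partial_{\vec{\alpha}_l} v_l),
\end{align*}
a finite sum with combinatorial coefficients $c_{\vec{\alpha}_1,\dots,\vec{\alpha}_l}$ depending only on $l$ and $M.$ I then split this sum into two groups and estimate each in $L^2.$

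For the single term with $\vec{\alpha}_l = \vec{\alpha}$ and $\vec{\alpha}_1 = \cdots = \vec{\alpha}_{l-1} = 0$ (all derivatives landing on $v_l$), I bound
\begin{align*}
	\| v_1 \cdots v_{l-1} \, \partial_{\vec{\alpha}} v_l \|_{L^2}
	\leq \Big(\prod_{a=1}^{l-1} \| v_a \|_{L^{\infty}}\Big) \| \partial_{\vec{\alpha}} v_l \|_{L^2}
	\leq \Big(\prod_{a=1}^{l-1} \| v_a \|_{L^{\infty}}\Big) \| v_l \|_{H^M},
\end{align*}
which is controlled by the first term on the right-hand side of the proposition. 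For every remaining term at least one of $\vec{\alpha}_1, \dots, \vec{\alpha}_{l-1}$ is nonzero, hence $m \geq 1,$ and since $v_a \in L^\infty$ for all $a,$ $\underpartial^{(m)} v_l \in L^2$ (as $v_l \in H^M,\ m \le M$), and $\underpartial^{(m)} v_a = \underpartial^{(m-1)}(\underpartial v_a) \in L^2$ for $a \leq l-1$ (as $\underpartial v_a \in H^{M-1},\ m-1 \le M-1$), Proposition \ref{P:derivativesofF1FkL2}, applied to the $l$ functions $v_1,\dots,v_l$ with multi-indices $\vec{\alpha}_1,\dots,\vec{\alpha}_l$ (whose orders sum to $m$), gives
\begin{align*}
	\| (\partial_{\vec{\alpha}_1} v_1) \cdots (\partial_{\vec{\alpha}_l} v_l) \|_{L^2}
	\leq C(l,M) \sum_{a=1}^l \Big( \| \underpartial^{(m)} v_a \|_{L^2} \prod_{b \neq a} \| v_b \|_{L^{\infty}} \Big).
\end{align*}
The $a = l$ summand is $\leq \| v_l \|_{H^M} \prod_{b < l} \| v_b \|_{L^{\infty}},$ again of the first allowed type; for $1 \leq a \leq l-1$ I use $1 \leq m \leq M$ together with $\underpartial v_a \in H^{M-1}$ to write $\| \underpartial^{(m)} v_a \|_{L^2} \leq C \| \underpartial v_a \|_{H^{M-1}},$ so that summand is $\leq C \| \underpartial v_a \|_{H^{M-1}} \prod_{b \neq a} \| v_b \|_{L^{\infty}}$ (here the factor $\| v_l \|_{L^{\infty}}$ appears, which is legitimate since $v_l \in L^\infty$). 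Summing the finitely many terms, squaring, summing over $|\vec{\alpha}| \leq M,$ and taking square roots then yields the claimed inequality with a new constant $C(l,M).$

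Since every ingredient is either the Leibniz rule, an already-established estimate, or the elementary fact that $\underpartial v_a \in H^{M-1}$ controls $\| \underpartial^{(j)} v_a \|_{L^2}$ for all $1 \leq j \leq M,$ there is no real obstacle here. The only points requiring care are the clean isolation of the ``all derivatives on $v_l$'' term --- the one term permitted to cost a full $H^M$ norm of a factor --- from the rest, and the bookkeeping that keeps exactly one factor out of $L^{\infty}$ in each product so that the remaining factors can be estimated there.
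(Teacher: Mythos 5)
Your proof is correct. The paper does not actually prove Proposition \ref{P:F1FKLINFINITYHN}: it records it in the Appendix as a standard Sobolev--Moser estimate and refers to H\"ormander's Chapter 6 and the Klainerman--Majda appendix for the techniques. Your argument --- the multi-index Leibniz rule, isolating the single $\vec{\alpha}_l = \vec{\alpha}$ term (the one permitted to cost $\|v_l\|_{H^M}$), and running everything else through Proposition \ref{P:derivativesofF1FkL2} --- is exactly the sort of Gagliardo--Nirenberg bookkeeping those sources contain. Two small points you handled correctly and that are worth emphasizing are why the split is mandatory rather than cosmetic: first, the factors $v_1,\dots,v_{l-1}$ are not assumed to lie in $L^2$, so the zero-order term $v_1\cdots v_l$ cannot be fed into Proposition \ref{P:derivativesofF1FkL2} with $M=0$ and must be bounded directly using $v_l \in L^2$; second, the hypothesis on those factors is only $\underpartial v_a \in H^{M-1}$, so the right way to control $\|\underpartial^{(m)} v_a\|_{L^2}$ for $1\le m\le M$ is precisely through $\underpartial^{(m-1)}(\underpartial v_a)$ as you do. There is no gap.
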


\begin{remark}
	The significance of this proposition is that only one of the functions, namely $v_l,$ is estimated in $L^2.$
\end{remark}

\begin{proposition}                                                                             \label{P:SOBOLEVMISSINGDERIVATIVEPROPOSITION}
    Let $M \geq 1$ be an integer, let $\mathfrak{K}$ be a compact, convex set, and
    let $F \in C_b^M(\mathfrak{K})$ be a function.
    Assume that $v_1$ is a function such that $v_1(\mathbb{R}^3) \subset \mathfrak{K},$ that $\underpartial v_1 \in 
    L^{\infty},$ and that $\underpartial^{(M)} v_1 \in L^2.$ Assume that $v_2 \in L^{\infty},$ that $\underpartial^{(M-1)} v_2 \in L^2,$ 
    and let $\vec{\alpha}$ be a spatial derivative multi-index with with $|\vec{\alpha}| = M.$ Then 
    $\partial_{\vec{\alpha}} \left((F \circ v_1 )v_2\right) - (F \circ v_1)\partial_{\vec{\alpha}} v_2 \in L^2,$ 
    and
        
 			\begin{align}       \label{E:SobolevMissingDerivativeProposition}
      	\|\partial_{\vec{\alpha}} & \left((F \circ v_1 )v_2\right) - (F \circ v_1)\partial_{\vec{\alpha}} v_2\|_{L^2} \notag \\
        	& \leq C(M) \Big\lbrace |F^{(1)}|_{\mathfrak{K}} \|\underpartial v_1 \|_{L^{\infty}} \| \underpartial^{(M-1)} v_2 \|_{L^2} 
					+ \| v_2 \|_{L^{\infty}} \| \underpartial v_1 \|_{H^{M-1}} \sum_{l=1}^M |F^{(l)}|_{\mathfrak{K}} 
    			\| v_1 \|_{L^{\infty}}^{l - 1} \Big\rbrace.
       \end{align}
\end{proposition}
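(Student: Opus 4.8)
The proof is a routine application of the Leibniz rule together with the Moser-type product estimate of Proposition \ref{P:derivativesofF1FkL2} and the differentiated chain-rule estimate of Proposition \ref{P:DIFFERENTIATEDSOBOLEVCOMPOSITION}. First I would expand via the Leibniz rule and isolate the single term in which all derivatives land on $v_2$:
\begin{align*}
	\partial_{\vec{\alpha}}\big((F \circ v_1) v_2\big) - (F \circ v_1) \partial_{\vec{\alpha}} v_2
	= \sum_{\substack{\vec{\gamma} + \vec{\delta} = \vec{\alpha} \\ |\vec{\gamma}| \geq 1}}
		\binom{\vec{\alpha}}{\vec{\gamma}} \big(\partial_{\vec{\gamma}}(F \circ v_1)\big)\big(\partial_{\vec{\delta}} v_2\big).
\end{align*}
It then suffices to bound in $L^2$ each product $(\partial_{\vec{\gamma}}(F \circ v_1))(\partial_{\vec{\delta}} v_2)$ with $1 \leq |\vec{\gamma}| \leq M$ and $|\vec{\gamma}| + |\vec{\delta}| = M$, and to sum the finitely many contributions, absorbing the combinatorial factors into $C(M)$.

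The key device is to peel one derivative off the composition factor. Since $|\vec{\gamma}| \geq 1$, write $\partial_{\vec{\gamma}} = \partial_k \partial_{\vec{\gamma}'}$ with $k \in \{1,2,3\}$ and $|\vec{\gamma}'| = |\vec{\gamma}| - 1$, and use the chain rule $\partial_k(F \circ v_1) = (F' \circ v_1)\partial_k v_1$ to rewrite the term as $\big(\partial_{\vec{\gamma}'}[(F' \circ v_1)\partial_k v_1]\big)\big(\partial_{\vec{\delta}} v_2\big)$, now a product of total differential order $|\vec{\gamma}'| + |\vec{\delta}| = M - 1$. I would then apply Proposition \ref{P:derivativesofF1FkL2} (with $l = 2$ and order $M-1$) to the two functions $(F' \circ v_1)\partial_k v_1$ and $v_2$: both lie in $L^\infty$ — the first because $F'$ is bounded on $\mathfrak{K}$ and $\underpartial v_1 \in L^\infty$, with $\|(F' \circ v_1)\partial_k v_1\|_{L^\infty} \leq |F^{(1)}|_{\mathfrak{K}}\|\underpartial v_1\|_{L^\infty}$ — and their $(M-1)$-st order homogeneous Sobolev norms are finite, for $v_2$ by hypothesis and for $(F' \circ v_1)\partial_k v_1 = \partial_k(F \circ v_1)$ because $\|\underpartial^{(M-1)}[\partial_k(F \circ v_1)]\|_{L^2} \leq \|\underpartial(F \circ v_1)\|_{H^{M-1}}$, which is controlled by Proposition \ref{P:DIFFERENTIATEDSOBOLEVCOMPOSITION}. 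This produces
\begin{align*}
	\big\| \big(\partial_{\vec{\gamma}}(F \circ v_1)\big)\big(\partial_{\vec{\delta}} v_2\big)\big\|_{L^2}
	\leq C(M)\Big\lbrace \big\|\underpartial(F \circ v_1)\big\|_{H^{M-1}}\|v_2\|_{L^\infty}
		+ |F^{(1)}|_{\mathfrak{K}}\|\underpartial v_1\|_{L^\infty}\|\underpartial^{(M-1)} v_2\|_{L^2} \Big\rbrace,
\end{align*}
and bounding $\|\underpartial(F \circ v_1)\|_{H^{M-1}} \leq C(M)\|\underpartial v_1\|_{H^{M-1}}\sum_{l=1}^M |F^{(l)}|_{\mathfrak{K}}\|v_1\|_{L^\infty}^{l-1}$ via Proposition \ref{P:DIFFERENTIATEDSOBOLEVCOMPOSITION}, then summing over $\vec{\gamma}$ and $k$, yields exactly \eqref{E:SobolevMissingDerivativeProposition}. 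The assertion $\partial_{\vec{\alpha}}((F \circ v_1)v_2) - (F \circ v_1)\partial_{\vec{\alpha}} v_2 \in L^2$ falls out of the same bounds.

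The only point requiring care — and the reason this is not an immediate corollary of Proposition \ref{P:derivativesofF1FkL2} — is how to distribute the derivative load: one must never be forced to estimate $\underpartial^{(M)} v_2$ in $L^2$, which is not assumed, and this is precisely why one peels a derivative off the factor $F \circ v_1$ (guaranteed available since $|\vec{\gamma}|\geq 1$) to reduce to total order $M-1$, at which point $\underpartial^{(M-1)} v_2 \in L^2$ is exactly what is needed. The pointwise hypothesis $\underpartial v_1 \in L^\infty$ (rather than merely $\underpartial v_1 \in H^{M-1}$) is what lets the all-derivatives-on-$v_2$ term emerge with the factor $|F^{(1)}|_{\mathfrak{K}}\|\underpartial v_1\|_{L^\infty}$ instead of a top-order Sobolev norm of $v_1$. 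One should also dispatch the trivial case $M = 1$ directly (the sum reduces to the single term $(F' \circ v_1)(\partial_{\vec{\alpha}} v_1)v_2$), and read $\|\underpartial v_1\|_{H^{M-1}}$ on the right-hand side as $+\infty$ when $\underpartial v_1 \notin H^{M-1}$, in which case the estimate is vacuous; in all applications in this article $v_1$ and all its relevant derivatives are controlled, so this subtlety does not arise.
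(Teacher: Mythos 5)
Your proof is correct. The paper does not actually supply a proof of this proposition — it states just before Proposition \ref{P:derivativesofF1FkL2} that all the appendix estimates ``are standard results that can be proved using methods similar to those used in \cite[Chapter 6]{lH1997} and in the Appendix of \cite{sKaM1981}'' — so there is no paper argument to compare against; your route (Leibniz expansion, peel a single $\partial_k$ off the composition factor so every residual product has total order $M-1$, apply Proposition \ref{P:derivativesofF1FkL2} with $l=2$, and close with Proposition \ref{P:DIFFERENTIATEDSOBOLEVCOMPOSITION}) is exactly the standard one these citations point to, and you correctly identified the one non-automatic point, namely that peeling a derivative off $F\circ v_1$ is what keeps $\|\underpartial^{(M)}v_2\|_{L^2}$ from ever appearing. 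Your closing caveat about reading $\|\underpartial v_1\|_{H^{M-1}}$ as $+\infty$ when the intermediate derivatives are not in $L^2$ is a fair observation about the proposition's phrasing (the stated hypotheses $\underpartial v_1\in L^\infty$ and $\underpartial^{(M)}v_1\in L^2$ do not by themselves give $\underpartial v_1\in H^{M-1}$ on $\mathbb{R}^3$), but as you note it is immaterial in every application in the paper.
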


\bibliographystyle{amsalpha}
\bibliography{JBib}

\newcommand{\etalchar}[1]{$^{#1}$}
\def\cprime{$'$}
\providecommand{\bysame}{\leavevmode\hbox to3em{\hrulefill}\thinspace}
\providecommand{\MR}{\relax\ifhmode\unskip\space\fi MR }
\providecommand{\MRhref}[2]{%
  \href{http://www.ams.org/mathscinet-getitem?mr=#1}{#2}
}
\providecommand{\href}[2]{#2}
\begin{thebibliography}{RFC{\etalchar{+}}98}

\bibitem[Chr00]{dC2000}
Demetrios Christodoulou, \emph{The action principle and partial differential
  equations}, Annals of Mathematics Studies, vol. 146, Princeton University
  Press, Princeton, NJ, 2000. \MR{1739321 (2003a:58001)}

\bibitem[Chr07]{dC2007}
\bysame, \emph{The formation of shocks in 3-dimensional fluids}, EMS Monographs
  in Mathematics, European Mathematical Society (EMS), Z\"urich, 2007.
  \MR{2284927 (2008e:76104)}

\bibitem[EBW92]{gEjBsW1992}
G.~Efstathiou, J.~R. Bond, and S.~D.~M. White, \emph{{COBE background radiation
  anisotropies and large-scale structure in the universe}}, Royal Astronomical
  Society, Monthly Notices \textbf{258} (1992), 1P--6P.

\bibitem[Fri02]{hF2002}
Helmut Friedrich, \emph{Conformal {E}instein evolution}, The conformal
  structure of space-time, Lecture Notes in Phys., vol. 604, Springer, Berlin,
  2002, pp.~1--50. \MR{2007040 (2005b:83005)}

\bibitem[H{\"o}r97]{lH1997}
Lars H{\"o}rmander, \emph{Lectures on nonlinear hyperbolic differential
  equations}, Math\'ematiques \& Applications (Berlin) [Mathematics \&
  Applications], vol.~26, Springer-Verlag, Berlin, 1997. \MR{MR1466700
  (98e:35103)}

\bibitem[Hsi97]{lHs1997}
L.~Hsiao, \emph{Quasilinear hyperbolic systems and dissipative mechanisms},
  World Scientific Publishing Co. Inc., River Edge, NJ, 1997. \MR{1640089
  (99i:35096)}

\bibitem[KM81]{sKaM1981}
Sergiu Klainerman and Andrew Majda, \emph{Singular limits of quasilinear
  hyperbolic systems with large parameters and the incompressible limit of
  compressible fluids}, Comm. Pure Appl. Math. \textbf{34} (1981), no.~4,
  481--524. \MR{615627 (84d:35089)}

\bibitem[LV11]{cLjK2011}
C.~{L{\"u}bbe} and J.~A. {Valiente Kroon}, \emph{{A conformal approach for the
  analysis of the non-linear stability of pure radiation cosmologies}}, ArXiv
  e-prints (2011).

\bibitem[Nis78]{tN1978}
Takaaki Nishida, \emph{Nonlinear hyperbolic equations and related topics in
  fluid dynamics}, D\'epartement de Math\'ematique, Universit\'e de Paris-Sud,
  Orsay, 1978, Publications Math{\'e}matiques d'Orsay, No. 78-02. \MR{0481578
  (58 \#1690)}

\bibitem[Noe71]{eN1918}
Emmy Noether, \emph{Invariant variation problems}, Transport Theory Statist.
  Phys. \textbf{1} (1971), no.~3, 186--207, Translated from the German (Nachr.
  Akad. Wiss. G{\"o}ttingen Math.-Phys. Kl. II 1918, 235--257). \MR{0406752 (53
  \#10538)}

\bibitem[PR99]{aRsP1999}
S.~{Perlmutter} and A.~{Riess}, \emph{{Cosmological parameters from supernovae:
  Two groups' results agree}}, COSMO-98 ({D.~O.~Caldwell}, ed.), American
  Institute of Physics Conference Series, vol. 478, July 1999, pp.~129--142.

\bibitem[Ren04a]{aR2004a}
Alan~D. Rendall, \emph{Accelerated cosmological expansion due to a scalar field
  whose potential has a positive lower bound}, Classical Quantum Gravity
  \textbf{21} (2004), no.~9, 2445--2454. \MR{2056321 (2005a:83146)}

\bibitem[Ren04b]{aR2004b}
\bysame, \emph{Asymptotics of solutions of the {E}instein equations with
  positive cosmological constant}, Ann. Henri Poincar\'e \textbf{5} (2004),
  no.~6, 1041--1064. \MR{2105316 (2005h:83017)}

\bibitem[Ren05a]{aR2005a}
\bysame, \emph{Intermediate inflation and the slow-roll approximation},
  Classical Quantum Gravity \textbf{22} (2005), no.~9, 1655--1666.
  \MR{MR2136118 (2005m:83164)}

\bibitem[Ren05b]{aR2005b}
\bysame, \emph{Theorems on existence and global dynamics for the {E}instein
  equations}, Living Reviews in Relativity \textbf{8} (2005), no.~6.

\bibitem[Ren06]{aR2006b}
\bysame, \emph{Mathematical properties of cosmological models with accelerated
  expansion}, Analytical and numerical approaches to mathematical relativity,
  Lecture Notes in Phys., vol. 692, Springer, Berlin, 2006, pp.~141--155.
  \MR{2222551 (2007f:83117)}

\bibitem[RFC{\etalchar{+}}98]{aRaF1998}
Adam~G. Riess, Alexei~V. Filippenko, Peter Challis, Alejandro Clocchiattia,
  Alan Diercks, Peter~M. Garnavich, Ron~L. Gilliland, Craig~J. Hogan, Saurabh
  Jha, Robert~P. Kirshner, B.~Leibundgut, M.~M. Phillips, David Reiss, Brian~P.
  Schmidt, Robert~A. Schommer, Chris~R. Smith, J.~Spyromilio, Christopher
  Stubbs, Nicholas~B. Suntzeff, and John Tonry, \emph{Observational evidence
  from supernovae for an accelerating universe and a cosmological constant}.

\bibitem[Rin08]{hR2008}
Hans Ringstr{\"o}m, \emph{Future stability of the {E}instein-non-linear scalar
  field system}, Invent. Math. \textbf{173} (2008), no.~1, 123--208.
  \MR{2403395 (2009c:53097)}

\bibitem[Rin09]{hR2009}
\bysame, \emph{Power law inflation}, Comm. Math. Phys. \textbf{290} (2009),
  no.~1, 155--218. \MR{MR2520511 (2010g:58024)}

\bibitem[RS09]{iRjS2009}
I.~{Rodnianski} and J.~{Speck}, \emph{{The stability of the irrotational
  {Euler-Einstein} system with a positive cosmological constant}}, arXiv
  preprint: http://arxiv.org/abs/0911.5501 (2009), 1--70.

\bibitem[Sid85]{tS1985}
Thomas~C. Sideris, \emph{Formation of singularities in three-dimensional
  compressible fluids}, Comm. Math. Phys. \textbf{101} (1985), no.~4, 475--485.
  \MR{815196 (87d:35127)}

\bibitem[Spe09a]{jS2008b}
Jared Speck, \emph{The non-relativistic limit of the {E}uler-{N}ordstr\"om
  system with cosmological constant}, Rev. Math. Phys. \textbf{21} (2009),
  no.~7, 821--876. \MR{MR2553428}

\bibitem[Spe09b]{jS2008a}
\bysame, \emph{Well-posedness for the {E}uler-{N}ordstr\"om system with
  cosmological constant}, J. Hyperbolic Differ. Equ. \textbf{6} (2009), no.~2,
  313--358. \MR{MR2543324}

\bibitem[Spe11]{jS2011}
\bysame, \emph{{The nonlinear future-stability of the FLRW family of solutions
  to the Euler-Einstein system with a positive cosmological constant}}, arXiv
  preprint: http://arxiv.org/abs/1102.1501 (2011).

\bibitem[SS11]{jSrS2011}
Jared Speck and Robert Strain, \emph{Hilbert expansion from the {Boltzmann}
  equation to relativistic fluids}, Communications in Mathematical Physics
  (2011), 1--52.

\bibitem[Wal84]{rW1984}
Robert~M. Wald, \emph{General relativity}, University of Chicago Press,
  Chicago, IL, 1984. \MR{MR757180 (86a:83001)}

\bibitem[Wei08]{sW2008}
Steven Weinberg, \emph{Cosmology}, Oxford University Press, Oxford, 2008.
  \MR{2410479 (2009g:83182)}

\bibitem[WLR99]{kKsWoLmR1999}
Kelvin K.~S. Wu, Ofer Lahav, and Martin~J. Rees, \emph{The large-scale
  smoothness of the universe}, Nature \textbf{397} (1999), 225.

\bibitem[YBPS05]{jYsBbPtS2005}
Jaswant Yadav, Somnath Bharadwaj, Biswajit Pandey, and T.~R. Seshadri,
  \emph{Testing homogeneity on large scales in the sloan digital sky survey
  data release one}, Monthly Notices of the Royal Astronomical Society
  \textbf{364} (2005), no.~2, 601--606.

\end{thebibliography}

\ \\

\end{document}